\newtheorem{thm}{Theorem}[section]
\newtheorem{lem}[thm]{Lemma}
\newtheorem{prop}[thm]{Proposition}
\newtheorem{cor}[thm]{Corollary}
\theoremstyle{definition}
\newtheorem{prob}[thm]{Problem}
\newtheorem*{rem}{\rm Remark}
\DeclareMathOperator{\insep}{insep}
\DeclareMathOperator{\Dio}{Dio}
\DeclareMathOperator{\Res}{Res}
\DeclareMathOperator{\Disc}{Disc}
\DeclareMathOperator{\Card}{Card}
\edef\csname f\@Alph\@tempcnta\endcsname{\noexpand\mathfrak{\@Alph\@tempcnta}}
\edef\csname l\@Alph\@tempcnta\endcsname{\noexpand\mathbb{\@Alph\@tempcnta}}
\edef\csname c\@Alph\@tempcnta\endcsname{\noexpand\mathcal{\@Alph\@tempcnta}}
\title{Quadratic approximation in $\lF _q((T^{-1}))$}
\author{Tomohiro Ooto}
\date{}
\begin{document}
\address{Graduate School of Pure and Applied Sciences, University of Tsukuba, Tennodai 1-1-1, Tsukuba, Ibaraki, 305-8571, Japan}
\email{ooto@math.tsukuba.ac.jp}
\subjclass[2010]{11J61, 11J70, 11J82}
\keywords{Diophantine approximation; continued fractions; the Laurent series over a finite field.}

\begin{abstract}
In this paper, we study Diophantine exponents $w_n$ and $w_n ^{*}$ for Laurent series over a finite field.
Especially, we deal with the case $n=2$, that is, quadratic approximation.
We first show that the range of the function $w_2-w_2 ^{*}$ is exactly the closed interval $[0,1]$.
Next, we estimate an upper bound of the exponent $w_2$ of continued fractions with low complexity partial quotients.
\end{abstract}

\maketitle

\tableofcontents

\section{Introduction}
Let $p$ be a prime and $q$ be a power of $p$.
We denote by $\lF _q$ the finite field of elements $q$, $\lF _q[T]$ the ring of polynomials over $\lF _q$, $\lF _q(T)$ the field of rational functions over $\lF _q$, and $\lF _q((T^{-1}))$ the field of Laurent series over $\lF _q$.
For $\xi \in \lF _q((T^{-1})) \setminus \{ 0\}$, we can write
\begin{eqnarray*}
\xi = \sum _{n=N}^{\infty }a_n T^{-n},
\end{eqnarray*}
where $N\in \lZ $, $a_n \in \lF _q$, and $a_N \neq 0$.
We define the absolute value on $\lF _q ((T^{-1}))$ by $|0|:=0$ and $|\xi |:=q^{-N}$.
The absolute value can uniquely extend on the algebraic closure of $\lF _q((T^{-1}))$ and we continue to write $|\cdot |$ for the extended absolute value.

Throughout this paper, we regard elements of $(\lF _q[T])[X]$ as polynomials in $X$.
For $P(X) \in (\lF _q [T])[X]$, the {\itshape height} of $P(X)$, denoted by $H(P)$, is defined to be the maximal of absolute values of the coefficients of $P(X)$.
We denote by $(\lF _q[T])[X]_{\min }$ the set of non-constant irreducible primitive polynomials $P(X)\in (\lF _q[T])[X]$ whose the leading coefficient is monic in $T$.
For $\alpha \in \overline{\lF _q(T)}$, there exists a unique polynomial $P(X)\in (\lF _q[T])[X]_{\min }$ such that $P(\alpha )=0$. 
We call $P(X)$ the {\itshape minimal polynomial} of $\alpha $.
The {\itshape height} (resp.\ the {\itshape degree}, the {\itshape inseparable degree}) of $\alpha $, denoted by $H(\alpha )$ (resp.\ $\deg \alpha $, $\insep \alpha $), is defined to be the height of $P(X)$ (resp.\ the degree of $P(X)$, the inseparable degree of $P(X)$).
For $\xi \in \lF _q((T^{-1}))$ and integers $n, H\geq 1$, let $w_n (\xi ,H)$ and $w_n ^{*}(\xi ,H)$ be given by
\begin{gather*}
w_n (\xi ,H)=\min \{ |P(\xi )| \mid P(X)\in (\lF _q[T])[X], H(P)\leq H, \deg _X P\leq n, P(\xi )\neq 0 \} , \\
w_n ^{*}(\xi ,H)=\min \{ |\xi -\alpha | \mid \alpha \in \overline{\lF _q(T)}, H(\alpha )\leq H, \deg \alpha \leq n, \alpha \neq \xi \} . 
\end{gather*}
The Diophantine exponents $w_n$ and $w_n ^{*}$ are defined by
\begin{gather*}
w_n(\xi )=\limsup _{H\rightarrow \infty } \frac{-\log w_n(\xi ,H)}{\log H},\quad w_n ^{*}(\xi )=\limsup _{H\rightarrow \infty } \frac{-\log H w_n ^{*}(\xi ,H)}{\log H}.
\end{gather*}
In other words, $w_n(\xi )$ (resp.\ $w_n ^{*}(\xi )$) is the supremum of a real number $w$ (resp.\ $w^{*}$) which is satisfied that
\begin{eqnarray*}
0<|P(\xi )|\leq H(P)^{-w}\quad (\mbox{resp.\ } 0<|\xi -\alpha |\leq H(\alpha )^{-w^{*}-1})
\end{eqnarray*}
for infinitely many $P(X)\in (\lF _q[T])[X]$ of degree at most $n$ (resp.\ $\alpha \in \overline{\lF _q(T)}$ of degree at most $n$).

As in classical continued fraction theory of real numbers, if $\xi \in \lF _q((T^{-1}))$, then we can write
\begin{eqnarray*}
\xi = a_0+\cfrac{1}{a_1+\cfrac{1}{a_2+\cfrac{1}{\cdots }}},
\end{eqnarray*}
where $a_0, a_n \in \lF _q[T]$, $\deg a_n\geq 1$ for $n\geq 1$.
For simplicity of notation, we write $\xi =[a_0,a_1,a_2,\ldots ]$.
The $a_0 $ and $a_n $ are called the {\itshape partial quotients} of $\xi $.
We define $p_n$ and $q_n$ by
\begin{eqnarray*}
\begin{cases}
p_{-1}=1,\ p_0=a_0,\ p_n=a_n p_{n-1}+p_{n-2},\ n\geq 1,\\
q_{-1}=0,\ q_0=1,\ q_n=a_n q_{n-1}+q_{n-2},\ n\geq 1.
\end{cases}
\end{eqnarray*}
We call $(p_n/q_n)_{n\geq 0}$ {\itshape convergent sequence} of $\xi $ and have $p_n/q_n=[a_0,a_1,\ldots ,a_n]$ for $n\geq 0$ by induction on $n$.

In this paper, we study the difference of the Diophantine exponents $w_2$ and $w_2 ^{*}$ using continued fractions.
We denote by $\lfloor x\rfloor $ the integer part and $\lceil x\rceil $ the upper integer part of a real number $x$.
We construct explicitly continued fractions $\xi \in \lF _q((T^{-1}))$ for which $w_2(\xi )-w_2 ^{*}(\xi )=\delta $ for each $0<\delta \leq 1$ as follows:

\begin{thm}\label{main4}
Let $w$ be a real number which is greater than $(5+\sqrt{17})/2$ when $p\neq 2$, and that of $(9+\sqrt{65})/2$ when $p=2$.
Let $b,c\in \lF _q[T]$ be distinct polynomials of degree at least one.
We define a sequence $(a_{n,w})_{n\geq 1}$ by
\begin{eqnarray*}
a_{n,w}=\begin{cases}
c & \mbox{if } n=\lfloor w^i \rfloor \mbox{ for some integer } i\geq 0,\\
b & \mbox{otherwise}.
\end{cases}
\end{eqnarray*}
Set $\xi _w :=[0,a_{1,w},a_{2,w},\ldots ]$.
Then we have $w_2 ^{*}(\xi _w)=w-1$ and $w_2(\xi _w)=w$. 
\end{thm}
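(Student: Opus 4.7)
The plan is to establish the two lower bounds $w_2(\xi_w)\geq w$ and $w_2^{*}(\xi_w)\geq w-1$ by explicit constructions, and the matching upper bounds by ruling out strictly better quadratic approximations. Write $n_i:=\lfloor w^i\rfloor$ for the positions of $c$, and $(p_n/q_n)_{n\geq 0}$ for the convergents of $\xi_w$. Since the partial quotients lie in $\{b,c\}$ and $c$ appears at only logarithmically many positions, $\log|q_n|$ grows linearly in $n$ with the same slope as in the all-$b$ expansion, so $\log|q_{n_i}|/\log|q_{n_{i-1}}|\to w$ as $i\to\infty$. This ratio is the sole source of the exponent $w$.

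For $w_2^{*}(\xi_w)\geq w-1$, I consider the eventually periodic quadratics $\alpha_i:=[0,a_{1,w},\ldots,a_{n_{i-1},w},\overline{b}]$, obtained from $\xi_w$ by overwriting every $c$ beyond position $n_{i-1}$ with $b$; these share their first $n_i-1$ partial quotients with $\xi_w$. Standard convergent estimates in $\lF_q((T^{-1}))$ give $|\xi_w-\alpha_i|\asymp|q_{n_i}|^{-2}$, while expressing $\alpha_i$ as a M\"obius transform of $\beta:=[\overline{b}]$ yields $H(\alpha_i)\asymp|q_{n_{i-1}}|^2$. Combining with the growth ratio above gives the bound. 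For $w_2(\xi_w)\geq w$, I pull back the minimal polynomial $P_b(X)=X^2-bX-1$ of $\beta$ along the transform $X=(p_{n_{i-1}}Y+p_{n_{i-1}-1})/(q_{n_{i-1}}Y+q_{n_{i-1}-1})$, obtaining a quadratic $Q_i\in(\lF_q[T])[X]$ of height $O(|q_{n_{i-1}}|^2)$ with $|Q_i(\xi_w)|=|q_{n_{i-1}}\xi_w-p_{n_{i-1}}|^2\,|P_b(\xi_w^{(n_{i-1}+1)})|$. Because the complete quotient $\xi_w^{(n_{i-1}+1)}$ opens with $n_i-n_{i-1}-1$ copies of $b$, the second factor has size $|b|^{-2(n_i-n_{i-1})+O(1)}$ while the first is of size $|q_{n_{i-1}+1}|^{-2}$; multiplying out yields $|Q_i(\xi_w)|\leq H(Q_i)^{-w+o(1)}$.

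The main obstacle is the upper bounds. The strategy is to assume a quadratic algebraic $\alpha$ (resp.\ polynomial $P$) approximates $\xi_w$ strictly better than the $\alpha_i$ (resp.\ $Q_i$), then force a contradiction by showing that it must share with $\xi_w$ an initial segment longer than $n_i$ for an appropriate $i$, so that the rigid quasi-periodic structure of $\xi_w$ pins it down, up to bounded perturbation, as a member of the constructed family, violating maximality. The gap $w_2-w_2^{*}=1$ reflects the geometric fact that, for each $Q_i$, the conjugate root of $Q_i$ sits at bounded distance from $\xi_w$, contributing one extra factor of $H$ to $|Q_i(\xi_w)|$ relative to $|\xi_w-\alpha_i|$. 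The explicit thresholds $w>(5+\sqrt{17})/2$ in odd characteristic and $w>(9+\sqrt{65})/2$ in characteristic $2$ emerge from solving the quadratic inequalities needed to close the case analysis, the larger characteristic-$2$ bound reflecting the extra work required to exclude inseparable quadratic approximants. I expect the bulk of the argument to rely on lemmas developed earlier in the paper that translate continued-fraction data for $\xi_w$ directly into upper bounds on $w_2(\xi_w)$ and $w_2^{*}(\xi_w)$.
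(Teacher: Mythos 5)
Your lower-bound construction is the same as the paper's: you take the eventually periodic quadratics obtained by overwriting the late $c$'s with $b$ (the paper's $\xi_{w,j}$, your $\alpha_i$), and you correctly extract $|\xi_w-\alpha_i|\asymp|q_{\lfloor w^{j+1}\rfloor}|^{-2}$, $H(\alpha_i)\asymp|q_{\lfloor w^j\rfloor}|^2$ via the analogues of Lemma~\ref{height upper} and Lemma~\ref{h and l}, and $|\alpha_i-\alpha_i'|\asymp H(\alpha_i)^{-1}$ via Lemma~\ref{conj2}. Your Möbius pullback of $P_b$ is just the minimal polynomial of $\alpha_i$ written explicitly, so the lower bounds $w_2^*(\xi_w)\geq w-1$ and $w_2(\xi_w)\geq w$ go through.

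The upper bound, however, is a genuine gap. You propose to show that any quadratic $\alpha$ (or polynomial $P$) approximating $\xi_w$ better than the $\alpha_i$ ``must share with $\xi_w$ an initial segment longer than $n_i$'' and is therefore ``pinned down \ldots\ as a member of the constructed family.'' There is no such rigidity result for quadratic approximants: a quadratic number can be very close to $\xi_w$ in absolute value without its continued fraction expansion agreeing with $\xi_w$ on any long prefix (unlike the degree-one case, where good rational approximants are forced to be convergents). The paper's mechanism is entirely different and does not examine the continued fraction of the competitor $\alpha$ at all. In Proposition~\ref{Mahler and Koksma} one locates the unique index $j_0$ with $H(\alpha_{j_0})\leq c_3\{(c_2c_4)^{1/2}H(\alpha)\}^{2\theta/(\delta+\varepsilon-1)}<H(\alpha_{j_0+1})$, applies the Liouville inequalities (Proposition~\ref{Liouville}, Proposition~\ref{Liouvilleinequ3}, Lemma~\ref{Galois conj}) to get a lower bound on $|\alpha-\alpha_{j_0}|$ exceeding $|\xi_w-\alpha_{j_0}|$, and concludes by the ultrametric triangle inequality that $|\xi_w-\alpha|=|\alpha-\alpha_{j_0}|$ is bounded below by the desired power of $H(\alpha)$. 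The characteristic-$2$ case and the threshold inequalities $(w-2)(w-1)\geq 2w$ (resp.\ $(w-4)(w-1)\geq 4w$) arise as the solvability conditions for this squeezing argument, not from a case analysis about sharing prefixes. Your plan needs to be replaced by this Liouville-gap argument; as written, the key step ``close $\Rightarrow$ shares a long prefix'' is false and the proof does not close.
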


\begin{thm}\label{main5}
Let $w\geq 25$ be a real number and $b,c,d\in \lF _q[T]$ be distinct polynomials of degree at least one.
Let $0<\eta <\sqrt{w}/4$ be a positive number and put
\begin{eqnarray*}
m_i :=\left\lfloor \frac{\lfloor w^{i+1}\rfloor -\lfloor w^i-1\rfloor }{\lfloor \eta w^i\rfloor } \right\rfloor 
\end{eqnarray*}
for all $i\geq 1$.
We define a sequence $(a_{n,w,\eta })_{n\geq 1}$ by
\begin{eqnarray*}
a_{n,w,\eta }=\begin{cases}
c & \mbox{if } n=\lfloor w^i \rfloor \mbox{ for some integer } i\geq 0,\\
d & \parbox{250pt}{$\mbox{if } n \neq \lfloor w^i\rfloor \mbox{ for all integer }i\geq 0 \mbox{ and } n=\lfloor w^j\rfloor +m \lfloor \eta w^j\rfloor \mbox{ for some integer }1\leq m\leq m_j, j\geq 1,$}\\
b & \mbox{otherwise}.
\end{cases}
\end{eqnarray*}
Set $\xi _{w,\eta } :=[0,a_{1,w,\eta },a_{2,w,\eta },\ldots ]$.
Then we have
\begin{align*}
w_2 ^{*}(\xi _{w,\eta })=\frac{2 w-2-\eta }{2+\eta },\quad w_2(\xi _{w,\eta })=\frac{2 w-\eta }{2+\eta }.
\end{align*}
Hence, we have
\begin{eqnarray*}
w_2(\xi _{w,\eta })-w_2 ^{*}(\xi _{w,\eta })=\frac{2}{2+\eta }.
\end{eqnarray*}
\end{thm}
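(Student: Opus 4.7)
My plan is to adapt the technique used for Theorem~\ref{main4} so as to exploit the additional periodic structure created by the $d$-positions. The central observation is that within the $j$-th band of partial quotients of $\xi_{w,\eta}$, between the $c$'s at positions $k_j := \lfloor w^j \rfloor$ and $k_{j+1}$, the sequence consists of exactly $m_j$ consecutive copies of the block $[b, b, \ldots, b, d]$ of length $\ell_j := \lfloor \eta w^j \rfloor$, followed by a (possibly empty) tail of $b$'s and the $c$ at position $k_{j+1}$. This built-in periodicity furnishes natural candidates for quadratic approximation.

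Accordingly, I would set
\begin{eqnarray*}
\alpha_j := [0, a_{1,w,\eta}, \ldots, a_{k_j, w, \eta}, \overline{a_{k_j+1, w, \eta}, \ldots, a_{k_j + \ell_j, w, \eta}}]
\end{eqnarray*}
and estimate $H(\alpha_j)$ and $|\xi_{w,\eta} - \alpha_j|$. Via the matrix representation of eventually periodic continued fractions, $H(\alpha_j)$ is of order $|q_{k_j}| \cdot |q_{k_j + \ell_j}|$; meanwhile, the two continued fractions agree through position at least $k_j + m_j \ell_j$, so $|\xi_{w,\eta} - \alpha_j|$ is comparable to $|q_{k_{j+1}}|^{-2}$. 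Since the partial quotients lie in the finite set $\{b, c, d\}$ and the $c$'s and $d$'s have vanishing density, $\deg q_n$ grows linearly in $n$ with leading coefficient $\deg b$, and plugging these estimates into the definition of $w_2^*$ yields $w_2^*(\xi_{w,\eta}) \geq (2w - 2 - \eta)/(2+\eta)$. For the companion lower bound on $w_2$, I would take $P_j$ to be the minimal polynomial of $\alpha_j$ over $\lF_q[T]$ and use the factorisation $P_j(\xi) = A_j(\xi - \alpha_j)(\xi - \alpha_j^\sigma)$, together with an estimate of $|\xi_{w,\eta} - \alpha_j^\sigma|$ obtained from the reversed-period description of the Galois conjugate, to extract the additional factor $2/(2+\eta)$ beyond the generic inequality $w_2 \geq w_2^*$.

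Finally, the matching upper bounds require ruling out better approximations or polynomials, and I expect this to be the principal obstacle of the proof. For the upper bound on $w_2$, one must show that every degree-2 polynomial $P(X) = AX^2 + BX + C$ over $\lF_q[T]$ satisfies $|P(\xi_{w,\eta})| \geq H(P)^{-(2w-\eta)/(2+\eta) - o(1)}$; this typically proceeds by a detailed case analysis organised by the relative sizes of $|A|$, $|B|$, $|C|$ and by the location of $\xi_{w,\eta}$ within its continued fraction expansion (near $c$'s, near $d$'s, or in between), presumably leveraging the structural lemmas established earlier in the paper. The corresponding bound on $w_2^*$ then follows either from an analogous but lighter argument or from a transference inequality between $w_2$ and $w_2^*$.
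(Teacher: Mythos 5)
Your choice of approximating quadratics $\alpha_j$ (the eventually periodic continued fraction with period $b,\ldots,b,d$ of length $\ell_j=\lfloor \eta w^j\rfloor$ appended after position $k_j=\lfloor w^j\rfloor$) is exactly the sequence $\xi_{w,\eta,j}$ used in the paper, and the estimates $H(\alpha_j)\asymp |q_{k_j}q_{k_j+\ell_j}|$, $|\xi_{w,\eta}-\alpha_j|\asymp |q_{k_{j+1}}|^{-2}$, and $|\alpha_j-\alpha_j'|\asymp |q_{k_j}|^{-2}$ are the right ones (they are Lemma~\ref{h and l}, Lemma~\ref{fund}(v) together with Lemma~\ref{lower bound}, and Lemma~\ref{conj2}); combined with the vanishing density of $c$ and $d$ they produce the limiting exponents $(2w-2-\eta)/(2+\eta)$ and $2/(2+\eta)$ as you compute. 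One small imprecision: for the two-sided $\asymp$ on $|\xi_{w,\eta}-\alpha_j|$ you must verify that the partial quotients of $\xi_{w,\eta}$ and $\alpha_j$ agree through index $k_{j+1}-1$ and first differ at index $k_{j+1}$, which is why the paper records the inequality $\lfloor w^j\rfloor+(m_j+1)\lfloor\eta w^j\rfloor>\lfloor w^{j+1}\rfloor$; agreement ``at least through $k_j+m_j\ell_j$'' is not enough for the lower bound.

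The real gap is the upper bound. You propose a direct case analysis on the sizes of the coefficients $A,B,C$ of $P$ and the location of $\xi_{w,\eta}$ in its expansion; the paper does nothing of the kind and such an analysis would be considerably heavier. Instead, both the upper bound on $w_2^*$ and the two-sided control of $w_2-w_2^*$ are obtained at once by invoking Proposition~\ref{Mahler and Koksma}. That proposition takes as hypotheses exactly the two-sided estimates $c_1 H(\alpha_j)^{-1-\rho}\le|\xi-\alpha_j|\le c_2 H(\alpha_j)^{-1-\delta}$, the height gap $H(\alpha_{j+1})\le c_3 H(\alpha_j)^\theta$, and the conjugate separation $|\alpha_j-\alpha_j'|\le c_4 H(\alpha_j)^{-\varepsilon}$ together with $\limsup_j\bigl(-\log|\alpha_j-\alpha_j'|/\log H(\alpha_j)\bigr)\le\chi$, and, under the numerical condition $(\delta-2+\chi)(\delta-1+\varepsilon)\ge 2\theta(2-\varepsilon)$ (or its $p=2$ analogue), delivers $\delta\le w_2^*(\xi)\le\rho$ and $\varepsilon\le w_2(\xi)-w_2^*(\xi)\le\chi$. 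Its proof carries out the ``ruling out'' you allude to via the Liouville inequalities (Propositions~\ref{Liouville} and~\ref{Liouvilleinequ3}), not via coefficient casework. This is also where the hypotheses $w\ge 25$ and $\eta<\sqrt{w}/4$ enter: they ensure that the numerical inequality in Proposition~\ref{Mahler and Koksma} holds after perturbing the exponents by an arbitrary small $\iota>0$. Your sketch does not identify this mechanism and does not explain those hypotheses, so as written it would leave the matching upper bounds unproved.
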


Theorem \ref{main4}, \ref{main5} are analogues of Theorem 4.1, 4.2 in \cite{Bugeaud2} and Theorem 1, 2 in \cite{Bugeaud3}.
Theorem \ref{main4}, \ref{main5} are proved in a similar method of the proof of these analogue theorems.

In Section \ref{Properties sec}, we prove
\begin{eqnarray*}
0\leq w_2(\xi )-w_2 ^{*}(\xi )\leq 1
\end{eqnarray*}
for all $\xi \in \lF _q((T^{-1}))$ (Proposition \ref{main7}).
We also prove $w_n (\xi )=w_n ^{*}(\xi )=0$ for all $n\geq 1$ and $\xi \in \lF _q(T)$ (Theorem \ref{alg}).
Consequently, we determine the range of the function $w_2-w_2 ^{*}$ from Theorem \ref{main4} and \ref{main5}.

\begin{cor}\label{main6}
The range of the function $w_2-w_2 ^{*}$ is exactly the closed interval $[0,1]$.
\end{cor}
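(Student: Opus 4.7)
The plan is to combine the three auxiliary results cited just before the corollary: Proposition~\ref{main7} (giving the two-sided bound $0\le w_2-w_2^\ast\le 1$), Theorem~\ref{alg} (supplying the endpoint $0$), Theorem~\ref{main4} (supplying the endpoint $1$), and Theorem~\ref{main5} (covering the open interval $(0,1)$). The only real content is to verify that Theorem~\ref{main5} is flexible enough to realize every prescribed value $\delta\in(0,1)$.

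First, I would observe that Proposition~\ref{main7} immediately gives the inclusion of the range in $[0,1]$. For the endpoint $\delta=0$, pick any $\xi\in\lF_q(T)$; by Theorem~\ref{alg} we have $w_2(\xi)=w_2^\ast(\xi)=0$, hence $w_2(\xi)-w_2^\ast(\xi)=0$. For the endpoint $\delta=1$, fix any $w>(5+\sqrt{17})/2$ (resp.\ $w>(9+\sqrt{65})/2$ in characteristic $2$) and any admissible pair $b,c$; then Theorem~\ref{main4} gives the continued fraction $\xi_w$ with $w_2(\xi_w)-w_2^\ast(\xi_w)=w-(w-1)=1$.

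For a value $\delta\in(0,1)$, I would solve the equation $\dfrac{2}{2+\eta}=\delta$, obtaining the unique positive solution
\begin{equation*}
\eta_\delta=\frac{2(1-\delta)}{\delta}.
\end{equation*}
Next I would choose $w_\delta\ge 25$ large enough that $\eta_\delta<\sqrt{w_\delta}/4$, which is satisfied as soon as $w_\delta>\max\{25,\,64(1-\delta)^2/\delta^2\}$. Picking any three distinct nonconstant polynomials $b,c,d\in\lF_q[T]$, Theorem~\ref{main5} then provides a continued fraction $\xi_{w_\delta,\eta_\delta}\in\lF_q((T^{-1}))$ with $w_2(\xi_{w_\delta,\eta_\delta})-w_2^\ast(\xi_{w_\delta,\eta_\delta})=2/(2+\eta_\delta)=\delta$.

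Nothing in this argument is computational beyond the elementary inversion of $\eta\mapsto 2/(2+\eta)$; the only thing to be careful about is that the constraint $0<\eta<\sqrt{w}/4$ of Theorem~\ref{main5} is compatible with every target $\delta\in(0,1)$, and this is handled precisely by taking $w$ sufficiently large as above. Combining the four cases ($\delta=0$, $\delta\in(0,1)$, $\delta=1$, and the upper bound from Proposition~\ref{main7}) yields that the range of $w_2-w_2^\ast$ is exactly $[0,1]$.
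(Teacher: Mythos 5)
Your proposal is correct and matches the paper's (implicit) argument exactly: Proposition~\ref{main7} gives the containment in $[0,1]$, Theorem~\ref{alg} with $\xi\in\lF_q(T)$ gives $0$, Theorem~\ref{main4} gives $1$, and inverting $\eta\mapsto 2/(2+\eta)$ in Theorem~\ref{main5} (with $w$ taken large enough to satisfy $\eta<\sqrt{w}/4$) covers all of $(0,1)$. The paper states the corollary without a written-out proof, so your explicit verification that $\eta_\delta=2(1-\delta)/\delta$ and $w_\delta>\max\{25,\ 64(1-\delta)^2/\delta^2\}$ work is exactly the bookkeeping the authors left to the reader.
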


For $\xi \in \lF _q((T^{-1}))$, we set
\begin{eqnarray*}
w(\xi ) :=\limsup _{n\rightarrow \infty }\frac{w_n(\xi )}{n},\quad w^{*}(\xi ) :=\limsup _{n\rightarrow \infty }\frac{w_n ^{*}(\xi )}{n}.
\end{eqnarray*}
We say that $\xi $ is an
\begin{gather*}
A\mbox{-{\itshape number} if } w(\xi )=0;\\
S\mbox{-{\itshape number} if } 0<w(\xi )<+\infty ;\\
T\mbox{-{\itshape number} if } w(\xi )=+\infty \mbox{ and } w_n(\xi )<+\infty \mbox{ for all } n;\\
U\mbox{-{\itshape number} if } w(\xi )=+\infty \mbox{ and } w_n(\xi )=+\infty \mbox{ for some } n.
\end{gather*}
This classification of $\lF _q((T^{-1}))$ was first introduced by Bundschuh \cite{Bundschuh} and is called {\itshape Mahler's classification}.
Replacing $w_n$ and $w$ with $w_n ^{*}$ and $w^{*}$, we define $A^{*}$-, $S^{*}$-, $T^{*}$-, and $U^{*}$-numbers as above.
This classification of $\lF _q((T^{-1}))$ was first introduced by Bugeaud \cite[Section 9]{Bugeaud1} and is called {\itshape Koksma's classification}.
Let $n\geq 1$ be an integer, $\xi \in \lF _q((T^{-1}))$ be a $U$-number, and $\zeta \in \lF _q((T^{-1}))$ be a $U^{*}$-number.
The number $\xi $ (resp.\ the number $\zeta $) is called a $U_n$-{\itshape number} (resp.\ $U_n ^{*}$-{\itshape number}) if $w_n(\xi )$ is infinite and $w_m(\xi )$ is finite (resp.\ $w_n ^{*}(\zeta )$ is infinite and $w_m ^{*}(\zeta )$ is finite) for all $1\leq m<n$.

Let $\mathcal{A}$ be a finite set.
Let $\mathcal{A}^{*}$, $\mathcal{A}^{+}$, and $\mathcal{A}^{\lN }$ denote the set of finite words over $\mathcal{A}$,  the set of nonempty finite words over $\mathcal{A}$, and the set of infinite words over $\mathcal{A}$.
We denote by $|W|$ the length of a finite word $W$ over $\mathcal{A}$.
For an integer $n\geq 0$, let $W^n=WW\cdots W$ ($n$ times repeated concatenation of the word $W$) and $\overline{W}=WW\cdots W\cdots $ (infinitely many concatenation of the word $W$).
Note that $W^0$ is equal to the empty word.
More generally, for a real number $w\geq 0$, let $W^{w}=W^{\lfloor w\rfloor}W'$, where $W'$ is the prefix of $W$ of length $\lceil (w-\lfloor w\rfloor )|W|\rceil $.
Let ${\bf a}=(a_n)_{n\geq 0}$ be a sequence over $\mathcal{A}$.
We identify ${\bf a}$ with the infinite word $a_0 a_1 \cdots a_n \cdots $.
Let $\rho $ be a real number.
We say that ${\bf a}$ satisfies {\itshape Condition} $(*)_{\rho }$ if there exist sequences of finite words $(U_n)_{n\geq 1}$, $(V_n)_{n\geq 1}$ and a sequence of nonnegative real numbers $(w_n)_{n\geq 1}$ such that
\begin{enumerate}
 \item[(i)] the word $U_n V_n ^{w_n}$ is the prefix of ${\bf a}$ for all $n\geq 1$,
 \item[(ii)] $|U_n V_n ^{w_n}|/|U_n V_n|\geq \rho $ for all $n\geq 1$,
 \item[(iii)] the sequence $(|V_n ^{w_n}|)_{n\geq 1}$ is strictly increasing.
\end{enumerate}
The {\itshape Diophantine exponent} of ${\bf a}$, first introduced in \cite{Adamczewski2}, denoted by $\Dio ({\bf a})$, and is defined to be the supremum of a real number $\rho $ such that ${\bf a}$ satisfy Condition $(*)_{\rho }$.
It is obvious that
\begin{eqnarray*}
1 \leq \Dio ({\bf a}) \leq +\infty .
\end{eqnarray*}
The infinite word ${\bf a}$ is called {\itshape ultimately periodic} if there exist finite words $U\in \mathcal{A}^{*}$ and $V\in \mathcal{A}^{+}$ such that ${\bf a}=U\overline{V}$.
The {\itshape complexity function} of ${\bf a}$ is defined by
\begin{eqnarray*}
p({\bf a}, n)= \Card \{a_i a_{i+1}\ldots a_{i+n-1}\mid i\geq 0 \} ,\quad \mbox{for }\ n\geq 1.
\end{eqnarray*}

We state now the second main results.

\begin{thm}\label{main1}
Let $\kappa \geq 2, A\geq q$ be integers and ${\bf a}=(a_n)_{n\geq 1}$ be a sequence over $\lF _q[T]$ with $q\leq |a_n|\leq A$ for all $n \geq 1$.
Assume that there exists an integer $n_0\geq 1$ such that
\begin{eqnarray*}
p({\bf a}, n) \leq \kappa n \mbox{ for all } n\geq n_0,
\end{eqnarray*}
and the Diophantine exponent of ${\bf a}$ is finite.
Set $\xi :=[0,a_1,a_2,\ldots ]$.
Then we have
\begin{eqnarray}\label{main1.1}
w_2 (\xi ) \leq 128(2\kappa +1)^3 \Dio ({\bf a}) \left( \frac{\log A}{\log q}\right) ^4 .
\end{eqnarray}
In particular, if the sequence $(|q_n|^{1/n})_{n\geq 1}$ converges, then we have
\begin{eqnarray}\label{main1.2}
w_2 (\xi ) \leq 64(2\kappa +1)^3 \Dio ({\bf a}).
\end{eqnarray}
\end{thm}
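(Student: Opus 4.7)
The plan is to contrapose: a large value of $w_2(\xi )$ will force $\Dio (\mathbf{a})$ to be correspondingly large. As a first reduction I would use Proposition \ref{main7} to get $w_2^{*}(\xi )\geq w_2(\xi )-1$, so it is enough to bound $w_2^{*}(\xi )$, that is, to limit how well $\xi $ can be approximated by quadratic elements $\alpha \in \overline{\lF _q(T)}$.

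Given any such $\alpha $ close to $\xi $, I would invoke the Laurent-series analogue of Lagrange's theorem to write
\[
\alpha =[c_0;c_1,\ldots ,c_r,\overline{c_{r+1},\ldots ,c_{r+s}}],
\]
and set $U=c_1\cdots c_r$, $V=c_{r+1}\cdots c_{r+s}$. A standard continuity argument for the continued fraction algorithm, combined with the estimates $q^n\leq |q_n(\xi )|\leq A^n$ coming from $q\leq |a_n|\leq A$, shows that if $|\xi -\alpha |<|q_N(\xi )|^{-2}$ then the first $N$ partial quotients of $\xi $ and $\alpha $ coincide. Consequently a prefix of $\mathbf{a}$ of length approximately $-\log |\xi -\alpha |/(2\log A)$ is of the form $UV^{w_{\alpha }}$ for some real $w_{\alpha }\geq 1$. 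The continuant identity bounds $H(\alpha )$ from above by a constant multiple of $|q_r(\alpha )q_{r+s}(\alpha )|$, hence by $A^{C(|U|+|V|)}$. Feeding this into the hypothesis $|\xi -\alpha |\leq H(\alpha )^{-w_2^{*}(\xi )-1}$ and rearranging produces a lower bound of the form
\[
\Dio (\mathbf{a})\geq \frac{|UV^{w_{\alpha }}|}{|UV|}\geq c_1 w_2^{*}(\xi )\left( \frac{\log q}{\log A}\right) ^{2}
\]
for some absolute $c_1>0$. Applied to the sequence of quadratic approximants realising the limsup in the definition of $w_2^{*}(\xi )$, this inverts (using the assumed finiteness of $\Dio (\mathbf{a})$) to an upper bound on $w_2^{*}(\xi )$, and hence on $w_2(\xi )$.

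The complexity hypothesis $p(\mathbf{a},n)\leq \kappa n$ enters when the period $V$ extracted above is unnecessarily long: a combinatorial lemma of Rauzy/Cassaigne type exploits $p(\mathbf{a},n)\leq \kappa n$ to replace $V$ by a proportionally shorter word, sharpening the lower bound on $\Dio (\mathbf{a})$. Each such shortening costs a factor linear in $\kappa $, and the exponent three in $(2\kappa +1)^3$ presumably reflects the three places where the complexity bound must be invoked---controlling $|U|$, controlling $|V|$, and ensuring compatibility between the heights $H(P)$ and $H(\alpha )$ in the passage from $w_2$ to $w_2^{*}$. The remaining factor $(\log A/\log q)^4$ arises from using the two-sided bound $q^n\leq |q_n|\leq A^n$ in four consecutive estimates linking $|\xi -\alpha |$, $H(\alpha )$, $N$ and $|UV|$. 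For the ``in particular'' statement (\ref{main1.2}), the convergence of $|q_n(\xi )|^{1/n}$ replaces these two-sided bounds by a single exponent, so the factor $(\log A/\log q)^4$ collapses and the leading constant is halved.

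The main technical obstacle I anticipate is the book-keeping needed for the precise cubic dependence on $\kappa $: one must verify that applying the combinatorial shortening lemma to the pair $(U,V)$ produced by $\alpha $ preserves the key property that $UV^{w_{\alpha }}$ remains a prefix of $\mathbf{a}$ and that $H(\alpha )$ remains comparable to $|q_r(\alpha )q_{r+s}(\alpha )|$, while simultaneously reducing $|UV|$ by the expected factor. Iterating the shortening without losing control of these parameters, and tracking the numerical cost of each iteration, is where most of the work should lie.
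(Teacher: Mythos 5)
Your proposal reverses the logical architecture of the paper's argument, and in doing so runs into a gap that I do not think can be patched.

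You propose to take an \emph{arbitrary} quadratic $\alpha$ with $|\xi -\alpha |\leq H(\alpha )^{-w_2^{*}(\xi )-1}$, expand it as $[0;c_1,\ldots ,c_r,\overline{c_{r+1},\ldots ,c_{r+s}}]$, and read off a periodic prefix $UV^{w_{\alpha }}$ of $\mathbf{a}$ with $U=c_1\cdots c_r$ and $V=c_{r+1}\cdots c_{r+s}$. The step that fails is the implicit assumption that $r+s$ is controlled logarithmically in $H(\alpha )$. Lemma~\ref{height upper} gives only the \emph{upper} bound $H(\alpha )\leq |q_r q_{r+s}|$; there is no matching lower bound for general $\alpha $, and in fact the minimal preperiod and period of a quadratic Laurent series of height $H$ may be polynomially large in $H$ (cf.\ the real-number analogue for $\sqrt{D}$, whose period is of order $D^{1/2+\varepsilon }$). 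Meanwhile the number $N$ of matching partial quotients between $\xi $ and $\alpha $ is only of order $\log H(\alpha )$. So for a general $\alpha $ the matching window of length $N$ may lie entirely inside the preperiod, in which case no occurrence of $V$ appears in the prefix of $\mathbf{a}$ at all; and even when $N>r+s$, the ratio $N/(r+s)$ need not exceed $1$ by any definite margin. Consequently the inequality $\Dio (\mathbf{a})\geq c_1 w_2^{*}(\xi )(\log q/\log A)^{2}$ you aim at does not follow, and the ``shortening'' use you envisage for the combinatorial lemma cannot repair this, because the lemma operates on prefixes of $\mathbf{a}$, not on the continued fraction of an externally supplied $\alpha $.

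The paper proceeds in the opposite direction. Lemma~\ref{combi} is used \emph{constructively}: the low-complexity hypothesis yields, for each $n\geq n_0$, a prefix $U_nV_n^{w_n}$ of $\mathbf{a}$ with all lengths $|U_n|,|V_n|$ comparable to $n$ and with exponent $w_n$ bounded away from $1$. Truncating and periodising this prefix produces explicit quadratic approximants $\alpha _j=[0,a_1,\ldots ,a_{r_j},\overline{a_{r_j+1},\ldots ,a_{r_j+s_j}}]$ whose heights, separations $|\alpha _j-\alpha _j'|$, and distances $|\xi -\alpha _j|$ admit two-sided estimates via Lemmas~\ref{fund}, \ref{height upper}, \ref{conj2}, \ref{h and l} and \ref{lower bound}. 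The decisive step is then Proposition~\ref{Mahler and Koksma2}, a gap/transference principle: the existence of such a controlled ladder of approximants, with an explicit \emph{lower} bound on $|\xi -\alpha _j|$ encoding the finiteness of $\Dio (\mathbf{a})$, bounds $w_2^{*}(\xi )$ from \emph{above}, because any competing quadratic $\alpha $ of comparable height would, by Liouville (Propositions~\ref{Liouville} and \ref{Liouvilleinequ3}), lie at least as far from $\xi $ as the nearest rung $\alpha _{j_0}$ does. Finally Proposition~\ref{main7} converts the bound on $w_2^{*}$ into the asserted bound on $w_2$. Your proposal correctly anticipates this last reduction and the origin of the factor $(\log A/\log q)^4$, and it correctly identifies that a Rauzy-type combinatorial lemma is involved, but the role of that lemma is to \emph{produce} the approximating quadratics, not to postprocess a period extracted from an arbitrary one, and without the transference step of Proposition~\ref{Mahler and Koksma2} the argument does not close.
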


There are special sequences which are satisfied the assumption of Theorem \ref{main1}, for example, automatic sequences,  primitive morphic sequences, and Strumian sequence with some condition.
The detail will appear in Section $2$.

\begin{thm}\label{main2}
Let ${\bf a}=(a_n)_{n\geq 1}$ be a non-ultimately periodic sequence over $\lF _q[T]$ with $\deg a_n \geq 1$ for all $n\geq 1$.
Assume that $(|q_n |^{1/n})_{n\geq 1}$ is bounded.
Put
\begin{eqnarray*}
m:= \liminf_{n\rightarrow \infty } |q_n|^{1/n},\quad M:=\limsup_{n\rightarrow \infty} |q_n|^{1/n}.
\end{eqnarray*}
Set $\xi :=[0,a_1,a_2,\ldots ]$.
Then we have
\begin{eqnarray}\label{lDio}
w_2(\xi )\geq w_2 ^{*}(\xi ) \geq \max \left( 2,\frac{\log m}{\log M} \Dio ({\bf a})-1\right).
\end{eqnarray}
In particular, if the sequence $(|q_n|^{1/n})_{n\geq 1}$ converges, then we have
\begin{eqnarray*}
w_2(\xi )\geq w_2 ^{*}(\xi ) \geq \max (2,\Dio ({\bf a})-1).
\end{eqnarray*}
Furthermore, assume that the sequence $(|a_n|)_{n\geq 1}$ is bounded.
Then we have
\begin{eqnarray}\label{lDio2}
w_2(\xi )\geq \max \left( 2,\frac{\log m}{\log M}( \Dio ({\bf a})+1)-1\right).
\end{eqnarray}
In particular, if the sequence $(|q_n|^{1/n})_{n\geq 1}$ converges, then we have
\begin{eqnarray*}
w_2(\xi )\geq \max (2,\Dio ({\bf a})).
\end{eqnarray*}
\end{thm}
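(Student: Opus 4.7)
My plan is to adapt the Adamczewski--Bugeaud construction of quadratic approximants to the non-archimedean field $\lF _q((T^{-1}))$. The inequality $w_2(\xi) \geq w_2^{*}(\xi)$ is Proposition~\ref{main7}, and the uniform lower bound $w_2^{*}(\xi) \geq 2$ can be obtained from the function-field Minkowski inequality (which yields $w_2(\xi) \geq 2$) together with $w_2^{*} \geq w_2 - 1$ from Proposition~\ref{main7}.

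For the main estimate \eqref{lDio}, fix $\rho$ with $1 \leq \rho < \Dio(\mathbf{a})$. By definition there exist words $U_n, V_n$ and positive reals $\omega_n$ such that $U_n V_n^{\omega_n}$ is a prefix of $\mathbf{a}$, $|U_n V_n^{\omega_n}| \geq \rho\,|U_n V_n|$, and $(|V_n^{\omega_n}|)$ is strictly increasing. Writing $s_n := |U_n|$, $t_n := |V_n|$, $r_n := |U_n V_n^{\omega_n}|$, set
\[
\alpha_n := [0, U_n, \overline{V_n}],
\]
which is a quadratic irrational over $\lF _q(T)$. Using the Möbius-transformation representation of $\alpha_n$ in terms of the convergents $p_j/q_j$ of $\xi$ ($j \leq s_n + t_n$), together with the fact that $[\overline{V_n}]$ is a fixed point of the Möbius transformation attached to the word $V_n$, one writes down an explicit quadratic polynomial for $\alpha_n$ and estimates $H(\alpha_n) \leq c\,|q_{s_n+t_n}|^{2}$ for some absolute constant $c$. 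Since $\xi$ and $\alpha_n$ agree in their first $r_n$ partial quotients, the standard non-archimedean continued-fraction estimate yields $|\xi - \alpha_n| \leq |q_{r_n}|^{-2}$.

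Combining these bounds with $m^{k(1-o(1))} \leq |q_k| \leq M^{k(1+o(1))}$ and $r_n \geq \rho(s_n + t_n)$ gives
\[
\frac{-\log|\xi - \alpha_n|}{\log H(\alpha_n)} \geq \rho\,\frac{\log m}{\log M} + o(1).
\]
Since the $\alpha_n$ are pairwise distinct (their preperiods tend to infinity by hypothesis~(iii)), this implies $w_2^{*}(\xi) + 1 \geq \rho\,\log m/\log M$; letting $\rho \nearrow \Dio(\mathbf{a})$ yields \eqref{lDio}. For the sharpened inequality \eqref{lDio2} I would evaluate $|P_n(\xi)|$ directly, where $P_n$ denotes the minimal polynomial of $\alpha_n$. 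Writing $P_n(X) = \lambda_n(X-\alpha_n)(X-\alpha_n')$ with $\alpha_n'$ the Galois conjugate, one has $|P_n(\xi)| = |\lambda_n|\,|\xi-\alpha_n|\,|\xi-\alpha_n'|$. Under the additional hypothesis that $(|a_n|)$ is bounded, a careful analysis of the conjugate $\alpha_n'$ via the matrix associated with the periodic word $V_n$ produces an extra factor $|q_{s_n+t_n}|^{-1}$ relative to the bound derived from $|\xi - \alpha_n|$ alone, which translates precisely into the additional $\log m/\log M$ summand on the right-hand side of \eqref{lDio2}.

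The principal technical obstacle is the sharp bound $H(\alpha_n) \leq c\,|q_{s_n+t_n}|^{2}$: the quadratic equation produced by the Möbius-transformation computation has coefficients which are explicit polynomials in the $p_j, q_j$ for $j \leq s_n + t_n$, but to reach the normalized minimal polynomial in $(\lF _q[T])[X]_{\min}$ one must divide by content, verify that the resulting polynomial is still irreducible, and check that $V_n$ genuinely produces an irrational quadratic rather than a rational element. Rigorously controlling the analogous estimates for the conjugate $\alpha_n'$ and the leading coefficient $\lambda_n$ in the bounded-partial-quotient setting of \eqref{lDio2} will be the secondary source of case analysis.
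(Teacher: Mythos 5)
Your plan constructs exactly the same quadratic approximants $\alpha_n = [0,U_n,\overline{V_n}]$ that the paper uses, and the chain of estimates ($|\xi-\alpha_n|$ via the agreement of partial quotients, $H(\alpha_n)$ via the periodic continued-fraction representation, $|P_n(\xi)|$ via the Galois conjugate for the sharpened bound) mirrors the paper's Proof of Theorem~\ref{main2}; this is the same route, not a new one. The height bound you flag as the ``principal technical obstacle'' is actually immediate from Lemma~\ref{height upper}, which gives $H(\alpha_n)\le|q_{|U_n|}q_{|U_n|+|V_n|}|$ (stronger than $|q_{|U_n|+|V_n|}|^2$, and with $c=1$), and Theorem~\ref{Lagrange} already guarantees that every infinite ultimately periodic continued fraction is a quadratic irrational, so your worries about irreducibility and rationality of $\alpha_n$ are non-issues.

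There is, however, a genuine gap in the step ``Since the $\alpha_n$ are pairwise distinct (their preperiods tend to infinity by hypothesis~(iii))\ldots''. Hypothesis (iii) in Condition $(*)_\rho$ only says $|V_n^{w_n}|$ is strictly increasing; it says nothing about $|U_n|$, which can perfectly well be bounded (even identically zero). So the preperiods need not tend to infinity, and distinctness of the $\alpha_n$ does not follow. The paper handles this by a case analysis on the boundedness of $(|U_n|)$ and $(|V_n|)$: if both are bounded one derives a contradiction with ${\bf a}$ being non-ultimately periodic; if $(|U_n|)$ is unbounded one passes to an increasing subsequence; and if $(|U_n|)$ is bounded but $(|V_n|)$ unbounded one uses the height bound $H(\alpha_n)\ll(M+\varepsilon)^{|U_n|+|V_n|}$ (which is better by a factor of $2$ in the exponent compared to your uniform $|q_{|U_n|+|V_n|}|^2$ bound, and this case splits further according to whether $\Dio({\bf a})$ is finite). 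This case analysis is also what guarantees $|U_n|+|V_n|\to\infty$ along a subsequence, which is implicitly needed for your multiplicative constant $c$ to be absorbed into the $o(1)$ term. The same splitting is required again for the sharpened bound \eqref{lDio2}; your description of ``an extra factor $|q_{s_n+t_n}|^{-1}$'' from the conjugate is also imprecise — what Lemma~\ref{conj2} actually gives is $|\xi-\alpha_n'|\le A^2|q_{|U_n|}|^{-2}$, whose index is $|U_n|$ rather than $|U_n|+|V_n|$, and it is precisely here that the boundedness of $(|a_n|)$ enters. With the case analysis supplied, the rest of your outline goes through.
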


Theorem \ref{main1} and \ref{main2} are analogues of Theorem 2.2 and 2.3 in \cite{Bugeaud2}.
Theorem \ref{main1} and \ref{main2} are proved in a similar method of the proof of these analogue theorems.

We state an immediately consequence of Theorem \ref{main1} and \ref{main2}.

\begin{cor}\label{main3}
Let ${\bf a}=(a_n)_{n\geq 1}$ be a non-ultimately periodic sequence over $\lF _q[T]$ with $\deg a_n\geq 1$ for $n\geq 1$.
Assume that $(|a_n|)_{n\geq 1}$ is bounded and
\begin{eqnarray*}
\limsup_{n\rightarrow \infty } \frac{p({\bf a}, n)}{n}<+\infty .
\end{eqnarray*}
Set $\xi :=[0,a_1, a_2,\ldots ]$.
Then the Diophantine exponent of ${\bf a}$ is finite if and only if $\xi $ is not a $U_2$-number.
\end{cor}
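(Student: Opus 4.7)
The plan is to prove the equivalence by showing each direction is an essentially direct consequence of one of the two preceding theorems, after verifying that their hypotheses are in force.

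For the implication that a finite Diophantine exponent precludes $\xi$ from being a $U_2$-number, I will invoke Theorem \ref{main1}. The boundedness of $(|a_n|)_{n\geq 1}$ supplies an integer $A\geq q$ with $|a_n|\leq A$, and the hypothesis $\limsup_n p({\bf a},n)/n<+\infty$ supplies $\kappa\geq 2$ and $n_0$ with $p({\bf a},n)\leq \kappa n$ for $n\geq n_0$. Theorem \ref{main1} then yields $w_2(\xi)<+\infty$. Since ${\bf a}$ is non-ultimately periodic, $\xi\notin \lF_q(T)$, so the defining condition $w_2(\xi)=+\infty$ of a $U_2$-number is violated.

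For the converse I will argue by contrapositive: assuming $\Dio({\bf a})=+\infty$ I will show $\xi$ is $U_2$. From $\deg a_n\geq 1$ and $|a_n|\leq A$ one obtains $q^n\leq |q_n|\leq A^n$, so the quantities $m,M$ appearing in Theorem \ref{main2} lie in $[q,A]$ and $\log m/\log M$ is a strictly positive finite real number. The bounded-partial-quotients inequality (\ref{lDio2}) of Theorem \ref{main2} then forces $w_2(\xi)=+\infty$, whence also $w(\xi)=+\infty$, so $\xi$ is a $U$-number. It remains only to check $w_1(\xi)<+\infty$, after which $\xi$ is $U_2$ by definition.

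The main obstacle is the verification that $w_1(\xi)<+\infty$, which neither main theorem addresses directly. My intended argument uses the function field analogue of the classical fact that the convergents of the continued fraction realize the best linear approximations: for $|q_n|\leq H<|q_{n+1}|$ the minimum $w_1(\xi,H)$ equals $|q_n\xi-p_n|=|q_{n+1}|^{-1}$, so $w_1(\xi)=\limsup_n \log|q_{n+1}|/\log|q_n|$; boundedness of $(|a_n|)$ together with $|q_n|\to\infty$ then bounds this $\limsup$ by $1$. Everything else is a formal application of Theorems \ref{main1} and \ref{main2}.
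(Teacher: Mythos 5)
Your proof is correct and takes essentially the approach the paper intends, since the paper presents the corollary without proof as an ``immediate consequence'' of Theorems~\ref{main1} and~\ref{main2}, and your two applications of those theorems (with hypotheses verified from $\deg a_n \geq 1$ and boundedness of $|a_n|$) are exactly what that phrase invites. The step you flag as the ``main obstacle,'' namely $w_1(\xi) < +\infty$, is in fact already recorded in the paper as the unnamed lemma at the end of Section~\ref{Continued sec} (boundedness of $(|q_n|^{1/n})_{n\geq 1}$ gives $w_1(\xi) \leq \log A/\log q$ via Lemma~\ref{irr ex}); your computation is the same one, merely re-derived from scratch.
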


We use the Vinogradov notation $A\ll B$ (resp.\ $A\ll _a B$) if $|A|\leq c |B|$ with some constant (resp.\ some constant depending at most on $a$) $c>0$.
We write $A\asymp B$ (resp.\ $A\asymp _a B$) if $A\ll B$ and $B\ll A$ (resp.\ $A\ll _a B$ and $B\ll _a A$) hold.

This paper is organized as follows.
In Section \ref{Automatic sec}, we define special sequences and apply Theorem \ref{main1} to these sequences.
In Section \ref{Liouville sec}, we prove Liouville inequalities, that is, a nontrivial lower bound of the absolute value of difference two algebraic numbers and that of polynomial at an algebraic point.
In Section \ref{Continued sec}, we prove some lemmas with respect to continued fractions.
In Section \ref{Properties sec}, we study the Diophantine exponents $w_n$ and $w_n ^{*}$.
In Section \ref{Applications sec}, applying Liouville inequality, we prove lemmas to determine the value of $w_2$ and $w_2 ^{*}$.
In Section \ref{Combinational sec}, we prove combinational lemma to show Theorem \ref{main1}.
In Section \ref{Proof sec}, we prove Theorem \ref{main4}, \ref{main5}, \ref{main1}, and \ref{main2}.
In Appendix \ref{Rational sec}, we prove an analogue of Theorem \ref{main1} and \ref{main2} for Laurent series over a finite field.

\section{Application of the main results}\label{Automatic sec}

In this section, we first recall properties of special sequences.
For a deeper discussion, we refer the reader to \cite{Allouche}.
Let $k\geq 2$ be an integer.
We denote by $\Sigma _k$ the set $\{ 0,1,\ldots ,k-1 \}$.
A $k$-{\itshape automaton} is a sextuple
\begin{eqnarray*}
A=(Q, \Sigma _k, \delta , q_0, \Delta ,\tau ),
\end{eqnarray*} 
where $Q$ is a finite set, $\delta :Q\times \Sigma _k\rightarrow Q$ is a map, $q_0 \in Q$, $\Delta $ is a set, and $\tau :Q\rightarrow \Delta $ is a map.
For $q\in Q$ and a finite word $W=w_0 w_1 \cdots w_m$ over $\Sigma _k$, we define recursively $\delta (q,W)$ by $\delta (q,W)=\delta (\delta (q,w_0 w_1\cdots w_{m-1}), w_m)$.
Let $n\geq 0$ be an integer and $W_n=w_r w_{r-1}\cdots w_0$, where $\sum _{i=0}^{r}w_i k^i$ is the $k$-ary expansion of $n$.
A sequence ${\bf a}=(a_n)_{n\geq 0}$ is said to be $k$-{\itshape automatic} if there exists a $k$-automaton $A=(Q, \Sigma _k, \delta , q_0, \Delta ,\tau )$ such that $a_n=\tau (\delta (q_0,W_n))$ for all $n\geq 0$.
The $k$-{\itshape kernel} of a sequence ${\bf a}=(a_n)_{n\geq 0}$ is the set of all sequences $(a_{k^i n+j})_{n\geq 0}$, where $i\geq 0$ and $0\leq j<k^i$.

It is known about $k$-automatic sequences as follows:

\begin{thm}[Eilenberg \cite{Eilenberg}]
Let $k\geq 2$ be an integer.
Then a sequence is $k$-automatic if and only if its $k$-kernel is finite.
\end{thm}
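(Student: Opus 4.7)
The plan is to prove the two implications separately via a correspondence between kernel subsequences and output functions on the state set. For the forward direction, assume ${\bf a}$ is generated by an automaton $A=(Q,\Sigma _k,\delta ,q_0,\Delta ,\tau )$. For each finite word $V$ over $\Sigma _k$ define $\tau _V:Q\to \Delta $ by $\tau _V(q)=\tau (\delta (q,V))$, and for $i\geq 0$, $0\leq j<k^i$ let $\tilde{W}_j$ be the base-$k$ expansion of $j$ padded with leading zeros to length exactly $i$. The identity $W_{k^i n+j}=W_n\tilde{W}_j$, valid for every $n\geq 1$, immediately yields $a_{k^i n+j}=\tau _{\tilde{W}_j}(\delta (q_0,W_n))$. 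Hence each kernel subsequence is determined by the pair $(\tau _{\tilde{W}_j},a_j)$, which lies in the finite set $\Delta ^Q\times \Delta $, so the $k$-kernel is finite.

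For the converse, assume the $k$-kernel $\mathcal{K}$ is finite. The key closure property is that each decimation $\sigma _j:(b_n)_{n\geq 0}\mapsto (b_{kn+j})_{n\geq 0}$ maps $\mathcal{K}$ into itself, since composing such decimations along a sequence of digits produces another element of the kernel. I would build a $k$-automaton with state set $\mathcal{K}$, initial state ${\bf a}$, transitions $\delta ({\bf b},j)=\sigma _j({\bf b})$, and output $\tau ({\bf b})=b_0$. Feeding in the base-$k$ digits of $n$ from the least to the most significant end, an induction shows that after $\ell $ digits the state is the sequence $(a_{k^{\ell }m+n_{\ell }})_{m\geq 0}$, where $n_{\ell }$ is the integer whose base-$k$ representation is the word read so far; at termination the state has $0$-th entry $a_n$, so the output is $a_n$. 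Since the paper's definition reads digits in the opposite order, the final step is to invoke the classical equivalence between least-significant-digit and most-significant-digit $k$-automaticity, which can be established by a transducer reversal argument.

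The main technical obstacle is bookkeeping about digit order. In the forward direction the padding of $\tilde{W}_j$ to length exactly $i$ is essential for the concatenation identity, and the exceptional index $n=0$ must be handled separately (this is why $a_j$ appears as the second coordinate of the parameter pair). In the converse, the automaton built from decimations naturally processes input from the least significant digit, so reconciling this with the paper's most-significant-first convention requires the standard (but nontrivial) equivalence between the two reading directions; this is where most of the remaining work would lie if one insisted on an explicit construction rather than a black-box appeal.
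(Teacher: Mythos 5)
The paper does not prove this statement; it cites it to Eilenberg's book, so there is no in-text proof to compare your argument against. Assessed on its own, your sketch is essentially the standard argument (cf.\ Allouche--Shallit, where it appears as Theorem 6.6.2). The forward direction is complete: the concatenation identity $W_{k^in+j}=W_n\tilde W_j$ is correct for $n\geq 1$, and you are right that the index $n=0$ must be carried separately (since $W_0\tilde W_j$ has a leading zero while $W_j$ does not), so recording each kernel element by the pair $(\tau_{\tilde W_j},a_j)\in\Delta^Q\times\Delta$ gives the required finite bound. The converse construction on state set $\mathcal K$ with $\delta(\mathbf b,j)=\sigma_j(\mathbf b)$ and output $\mathbf b\mapsto b_0$ is also correct, and the closure of $\mathcal K$ under the $\sigma_j$ follows from $\sigma_{j'}\bigl((a_{k^in+j})_n\bigr)=(a_{k^{i+1}n+(k^ij'+j)})_n$. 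The only place you defer to a black box is the equivalence between least-significant-first and most-significant-first $k$-automaticity; that equivalence is a nontrivial lemma of roughly the same depth as the theorem itself (it is proved by the automaton-reversal and determinization argument you allude to), so while invoking it is defensible here --- the paper itself cites the whole theorem --- a genuinely self-contained proof would have to either include that lemma or rework the converse to read digits most-significant-first directly. No step in your outline is wrong; the one acknowledged gap is real but standard.
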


\begin{lem}[Adamczewski and Cassaigne \cite{Adamczewski1}]
Let $k\geq 2$ be an integer.
Let ${\bf a}$ be a non-ultimately periodic and $k$-automatic sequence.
Let $m$ be a cardinality of the $k$-kernel of ${\bf a}$.
Then we have
\begin{eqnarray*}
\Dio ({\bf a})<k^m.
\end{eqnarray*}
\end{lem}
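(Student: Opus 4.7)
The plan is to argue by contradiction. Suppose Condition $(*)_\rho$ holds for some $\rho \geq k^m$, as it would if $\Dio({\bf a}) \geq k^m$. This produces sequences $(U_n)$, $(V_n)$, $(w_n)$ such that each $U_n V_n^{w_n}$ is a prefix of ${\bf a}$ with $|U_n V_n^{w_n}| \geq \rho(|U_n|+|V_n|)$ and $|V_n^{w_n}|$ strictly increasing. The goal is to deduce that ${\bf a}$ must then be ultimately periodic, contradicting the standing hypothesis.

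The main tool is the substitutional presentation available for $k$-automatic sequences. Eilenberg's theorem combined with the standard Cobham-type construction yields a $k$-uniform morphism $\sigma$ on an alphabet $\mathcal{B}$ with $|\mathcal{B}| \leq m$ and a coding $\tau : \mathcal{B} \to \mathcal{A}$ such that ${\bf a} = \tau({\bf b})$, where ${\bf b}$ is a fixed point of $\sigma$. The repetitive prefix $U_n V_n^{w_n}$ lifts via $\tau$ to a prefix of ${\bf b}$ with the same factorization. Because ${\bf b}$ is a $\sigma$-fixed point and $\sigma$ is $k$-uniform, one may \emph{desubstitute}: after trimming at most $k-1$ letters from each end so that $V_n$ aligns with $\sigma$-blocks, the shorter repetition descends to a repetition in ${\bf b}$ itself of block length $|V_n|/k$, essentially the same exponent, and starting position $|U_n|/k$. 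Under this descent the ratio $|U_n V_n^{w_n}|/(|U_n|+|V_n|)$ shrinks by a factor of at most $k$.

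Iterating this desubstitution $m$ times divides the initial ratio by at most $k^m$, and the hypothesis $\rho \geq k^m$ keeps the resulting ratio $\geq 1$. After these $m$ iterations one of two things must happen. In the first case the block length has been reduced below $1$, so the surviving prefix is a constant stretch of diverging length, forcing ${\bf b}$ to be ultimately periodic. In the second case, a pigeonhole argument on the $|\mathcal{B}| \leq m$ possible initial letters of the shrunk $V$-blocks identifies two desubstitution levels with matching structure, which again forces periodicity of ${\bf b}$. Either conclusion propagates through $\tau$ to show that ${\bf a}$ is ultimately periodic, the desired contradiction. The main technical obstacle is the careful bookkeeping of boundary losses at each desubstitution: one must verify that the ratio shrinks by a factor of at most $k$, and not more, per step, which is precisely what makes $k^m$ the sharp threshold in the statement.
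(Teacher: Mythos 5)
This lemma is not proved in the paper; it is simply cited from Adamczewski and Cassaigne, so there is no internal proof to compare against, and I am evaluating your sketch on its own merits.

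The central difficulty, which the sketch does not overcome, is non-injectivity of both the coding and the morphism. You claim the repetitive prefix $U_nV_n^{w_n}$ of ${\bf a}$ ``lifts via $\tau$ to a prefix of ${\bf b}$ with the same factorization,'' but a coding $\tau$ is generally not injective, and a repetitive word need not have a repetitive $\tau$-preimage: if $\tau(0)=\tau(1)=a$, then $\tau(0110)=aaaa$ is a fourth power while $0110$ is not even a square. The same defect recurs in the desubstitution step. Even granting divisibility by $k$, writing the relevant prefix of ${\bf b}$ as $U'V_1'V_2'\cdots$ with $\sigma(U')=U$ and $\sigma(V_i')=V$ only gives $\sigma(V_1')=\sigma(V_2')=\cdots$; since a $k$-uniform $\sigma$ need not be injective on letters, this does not force $V_1'=V_2'=\cdots$, so the shorter word need not be a repetition. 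Moreover, applying $m$ levels of desubstitution requires $|V_n|$ to be divisible by $k^m$, and the trimming you invoke to restore alignment costs more than a bounded number of letters per level once the block length becomes comparable to $k$; this is exactly where the bookkeeping you flag as the ``main technical obstacle'' actually breaks. Finally, the endgame is not correct: a prefix that is eventually a ``constant stretch of diverging length'' does not force ultimate periodicity (the sequence $0\,1\,00\,1\,000\,1\,0000\,1\cdots$ has arbitrarily long constant blocks yet is not ultimately periodic), and the pigeonhole on initial letters of the shrunken $V$-blocks is too weak to conclude periodicity of ${\bf b}$, let alone of ${\bf a}$. A correct argument has to bypass these injectivity failures, for instance by working directly with occurrences inside the $k$-kernel sequences of ${\bf a}$ rather than by literal desubstitution of the repetition.
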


Let $\mathcal{A}$ and $\mathcal{B}$ be finite sets.
A map $\sigma :\mathcal{A}^{*}\rightarrow \mathcal{B}^{*}$ is a {\itshape morphism} if $\sigma (UV)=\sigma (U)\sigma (V)$ for all $U,V \in \mathcal{A}$.
The {\itshape width} of $\sigma $ is defined to be $\max _{a\in \mathcal{A}}|\sigma (a)|$.
A morphism $\sigma $ is said to be $k$-{\itshape uniform} if there exists an integer $k\geq 1$ such that $|\sigma (a)|=k$ for all $a\in \mathcal{A}$.
In particular, we call a $1$-uniform morphism a {\itshape coding}.
A morphism $\sigma :\mathcal{A}^{*}\rightarrow \mathcal{A}^{*}$ is {\itshape primitive} if there exists an integer $n\geq 1$ such that $a$ occurs in $\sigma ^n(b)$ for all $a,b \in \mathcal{A}$.
A morphism $\sigma :\mathcal{A}^{*}\rightarrow \mathcal{A}^{*}$ is {\itshape prolongable} on $a\in \mathcal{A}$ if $\sigma (a)=aW,$ where $W\in \mathcal{A}^{+}$ and $\sigma ^n(W)$ is not an empty word for all $n\geq 1$.
A sequence ${\bf a}=(a_n)_{n\geq 0}$ is said to be $k$-{\itshape uniform morphic} (resp.\ {\itshape primitive morphic}) if there exist finite sets $\mathcal{A},\mathcal{B}$, a $k$-uniform morphism (resp.\ a primitive morphism) $\sigma :\mathcal{A}^{*}\rightarrow \mathcal{A}^{*}$ which is prolongable on some $a \in\mathcal{A}$, and a coding $\tau :\mathcal{A}^{*}\rightarrow \mathcal{B}^{*}$ such that ${\bf a}=\lim_{n\rightarrow \infty }\tau (\sigma ^n(a))$.
When ${\bf a}$ is a $k$-uniform morphic, we call $\mathcal{A}$ the {\itshape initial alphabet} associated with ${\bf a}$.

\begin{thm}[Cobham \cite{Cobham}]
Let $k\geq 2$ be an integer.
Then a sequence is $k$-automatic if and only if it is $k$-uniform morphic.
\end{thm}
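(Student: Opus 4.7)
The proof plan for Cobham's theorem proceeds by establishing both implications separately.

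For the direction that every $k$-uniform morphic sequence is $k$-automatic, suppose ${\bf a} = \tau(\sigma^{\infty}(a))$, where $\sigma: \mathcal{A}^{*} \to \mathcal{A}^{*}$ is a $k$-uniform morphism prolongable on $a$ and $\tau: \mathcal{A}^{*} \to \mathcal{B}^{*}$ is a coding. I would construct a $k$-automaton $A = (\mathcal{A}, \Sigma_{k}, \delta, a, \mathcal{B}, \tau)$ whose transition function $\delta(q, j)$ is defined as the letter at position $j$ (indexed from $0$) of $\sigma(q)$. An induction on $n$ then gives $(\sigma^{n}(a))_{m} = \delta(a, W)$ for every word $W \in \Sigma_{k}^{n}$ representing $m$ in base $k$, with leading zeros allowed. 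The prolongability assumption $\sigma(a) = aV$ with $V \in \mathcal{A}^{+}$ yields $\delta(a, 0) = a$, so padding a base-$k$ representation with leading zeros does not change $\delta(a, \cdot)$; applying $\tau$ then produces $a_{n} = \tau(\delta(a, W_{n}))$, as required.

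For the converse, start with a $k$-automaton $A = (Q, \Sigma_{k}, \delta, q_{0}, \Delta, \tau)$ computing ${\bf a}$. The natural candidate is the morphism $\sigma: Q^{*} \to Q^{*}$ defined by $\sigma(q) = \delta(q, 0) \delta(q, 1) \cdots \delta(q, k-1)$, together with the original coding $\tau$. By construction $\sigma$ is $k$-uniform, but prolongability on $q_{0}$ requires $\delta(q_{0}, 0) = q_{0}$, which need not hold. To remedy this, I would first replace $A$ by an equivalent automaton obtained by adjoining a fresh state $q_{0}'$ as the new initial state, with $\delta(q_{0}', 0) = q_{0}'$, $\delta(q_{0}', j) = \delta(q_{0}, j)$ for $1 \leq j < k$, and $\tau(q_{0}') = \tau(q_{0})$; this modified automaton still computes ${\bf a}$. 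The morphism $\sigma$ extended to the enlarged alphabet by the same rule is then prolongable on $q_{0}'$, and a straightforward induction shows $(\sigma^{n}(q_{0}'))_{m} = \delta(q_{0}', W)$ for every $W \in \Sigma_{k}^{n}$ representing $m$, so that $\tau(\sigma^{\infty}(q_{0}')) = {\bf a}$.

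The main obstacle lies in the forward direction: one must verify carefully that adjoining $q_{0}'$ yields exactly the same sequence and that the leading zeros introduced when padding $W_{m}$ to length $n$ are correctly absorbed by the self-loop $\delta(q_{0}', 0) = q_{0}'$. The two constructions are essentially inverse to each other once this normalization is in place, so apart from this combinatorial bookkeeping the proof reduces to a direct induction matching iterations of $\sigma$ with paths of a given length in the automaton.
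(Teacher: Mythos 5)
The paper does not prove this theorem but merely cites Cobham (1972), so there is no in-paper argument to compare against; what you have written is essentially the standard textbook proof, and the overall structure (the dictionary $\delta(q,j) = (\sigma(q))_j$ in one direction, $\sigma(q) = \delta(q,0)\cdots\delta(q,k-1)$ in the other, plus normalization so that $\delta(q_0',0)=q_0'$) is correct.

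One small but genuine bug in the converse direction ($k$-automatic $\Rightarrow$ $k$-uniform morphic): when you adjoin the fresh state $q_0'$ with the self-loop $\delta(q_0',0)=q_0'$, you set $\tau(q_0')=\tau(q_0)$. Under the paper's convention the expansion of $n=0$ is $W_0 = 0$ (a single digit, since $\sum_{i=0}^{r} w_i k^i$ forces $r=0$, $w_0=0$), so in the original automaton $a_0 = \tau(\delta(q_0,0))$. After your modification, position $0$ of $\sigma^n(q_0')$ is always $q_0'$, hence the morphic sequence has $\tau(q_0')$ there. For these to agree you need $\tau(q_0') := \tau(\delta(q_0,0))$, not $\tau(q_0)$. (Your choice is correct only under the alternative convention where the empty word represents $0$, i.e.\ $a_0 = \tau(q_0)$, which is not what the paper uses.) For $n\geq 1$ the leading digit of $W_n$ is nonzero, so your argument that $\delta(q_0',W_n)=\delta(q_0,W_n)$ is fine. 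Also, in your closing paragraph you call this normalization step the ``forward direction''; it belongs to the converse direction. Both are easily fixed and do not affect the overall strategy.
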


Moss\'e's result \cite{Mosse} implies the lemma below.

\begin{lem}
Let ${\bf a}$ be a non-ultimately periodic and primitive morphic sequence.
Then the Diophantine exponent of ${\bf a}$ is finite.
\end{lem}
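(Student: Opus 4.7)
The plan is to combine Mossé's recognizability theorem with an iterated desubstitution argument to show that an infinite Diophantine exponent would force an infinite critical exponent, which is impossible for an aperiodic primitive substitutive sequence. First, I reduce to the pure substitutive case: writing ${\bf a}=\tau({\bf b})$, where ${\bf b}$ is a fixed point of a primitive aperiodic substitution $\sigma:\mathcal{A}^{*}\to\mathcal{A}^{*}$ and $\tau$ is a coding, the coding preserves lengths, so any prefix factorisation $UV^{w}$ of ${\bf a}$ lifts to a prefix factorisation of ${\bf b}$ with identical lengths; hence $\Dio({\bf a})\le \Dio({\bf b})$, and it suffices to prove $\Dio({\bf b})<+\infty$. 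Non-ultimate periodicity of ${\bf a}$ passes to ${\bf b}$, so $\sigma$ is aperiodic.

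Next, I apply Mossé's theorem: there exists a constant $L_{0}\geq 1$ such that every factor $W$ of ${\bf b}$ with $|W|\geq L_{0}$ admits a unique desubstitution $W=P\,\sigma(W')\,S$, where $W'$ is a factor of ${\bf b}$ and $|P|,|S|\leq D:=\max_{a\in\mathcal{A}}|\sigma(a)|$. Assume for contradiction that $\Dio({\bf b})=+\infty$. For any large $\rho$ and any large $N$, there exist $(U,V,w)$ with $UV^{w}$ a prefix of ${\bf b}$, $|UV^{w}|/|UV|\geq\rho$, and $|V^{w}|\geq N$. When $|V|\geq L_{0}$, desubstituting $V$ produces a word $V'$ with $|V'|\leq(|V|+2D)/|\sigma|_{\min}$ together with a prefix $U'(V')^{w'}$ of ${\bf b}$ (up to a bounded shift) whose exponent satisfies $w'\geq w-O(1)$ and whose ratio differs from $|UV^{w}|/|UV|$ by at most $O(D/|V|)$. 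Iterating $O(\log|V|)$ times, the block length $|V^{(k)}|$ drops below $L_{0}$ while the ratio remains above $\rho/2$, since the cumulative boundary error is geometric in $D^{-k}$ and is absorbed by choosing $N$ sufficiently large.

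Because only finitely many words have length less than $L_{0}$, the pigeonhole principle produces a fixed word $V_{\star}$ occurring in ${\bf b}$ to arbitrarily large powers, so the critical exponent of ${\bf b}$ is infinite. This contradicts the known fact that a primitive aperiodic substitutive sequence has finite critical exponent, which is itself a consequence of the same recognizability theorem. Therefore $\Dio({\bf b})<+\infty$, and the lemma follows. The main obstacle is the bookkeeping in the desubstitution step: one has to track the boundary residues of length $O(D)$ at each level and verify that their cumulative effect erodes the ratio by at most a constant factor, which is precisely where the freedom to choose $|V^{w}|$ arbitrarily large along the sequences produced by Condition $(*)_{\rho}$ is essential.
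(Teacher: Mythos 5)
Your reduction to the pure substitutive case is where the argument breaks: you claim that because the coding $\tau$ preserves lengths, any prefix factorisation $UV^{w}$ of ${\bf a}=\tau({\bf b})$ lifts to a prefix factorisation of ${\bf b}$ with the same parameters, hence $\Dio({\bf a})\leq\Dio({\bf b})$. This is false. The prefix of ${\bf b}$ of length $|UV^{w}|$ certainly maps onto $UV^{w}$ under $\tau$, but a non-injective coding can collapse distinct letters and thereby create repetitions in ${\bf a}$ that are absent from ${\bf b}$; the lifted word need not be of the form $U'(V')^{w'}$ at all. (For instance, if $\tau(x)=\tau(y)=0$, $\tau(z)=1$, the prefix $xzyxzy\cdots$ of ${\bf b}$ is not a power of a length-$3$ block, yet its image $(001)(001)\cdots$ is.) The inequality that holds automatically is the opposite one, $\Dio({\bf b})\leq\Dio({\bf a})$, which is of no help. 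So the desubstitution analysis, which lives entirely on ${\bf b}$, never reaches ${\bf a}$.

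There is also a much shorter route to the conclusion that makes the desubstitution bookkeeping unnecessary, and it bypasses the coding problem. If $U_nV_n^{w_n}$ is a prefix of ${\bf a}$ with $|U_nV_n^{w_n}|/|U_nV_n|\geq\rho$, then $|V_n^{w_n}|\geq(\rho-1)|U_n|+\rho|V_n|\geq\rho|V_n|$, hence $w_n\geq\rho-1$. Therefore $\Dio({\bf a})$ is at most one more than the critical exponent of ${\bf a}$. The only substantive input needed is that an aperiodic primitive \emph{morphic} sequence (not merely a pure substitutive one) has finite critical exponent; this follows from Moss\'e's recognizability via linear recurrence, since linear recurrence passes through codings and linearly recurrent aperiodic sequences have bounded powers. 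As the paper cites only Moss\'e for this lemma, this is presumably the intended route. Your iterated-desubstitution scheme is aiming at essentially this same fact but along a more error-prone path, and in any case cannot be rescued without first repairing the coding reduction.
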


Let $\theta $ and $\rho $ be real numbers with $0<\theta <1$ and $\theta $ is irrational.
For $n\geq 1$, we put $s_{n,\theta ,\rho }:=\lfloor (n+1)\theta +\rho \rfloor - \lfloor n\theta +\rho \rfloor $ and $s_{n, \theta ,\rho } ':=\lceil (n+1)\theta +\rho \rceil -\lceil n\theta +\rho \rceil $.
A sequence ${\bf a}=(a_n)_{n\geq 1}$ is called {\itshape Sturmian} if there exist an irrational number $0<\theta <1$, a real number $\rho $, a finite set $\mathcal{A}$, and a coding $\tau :\{ 0,1\} ^{*}\rightarrow \mathcal{A}^{*}$ with $\tau (0)\neq \tau (1)$ such that $(a_n)_{n\geq 1}$ is $(\tau (s_{n,\theta ,\rho }))_{n\geq 1}$ or $(\tau (s_{n,\theta ,\rho } '))_{n\geq 1}$.
Then we call the irrational number $\theta $ {\itshape slope} of ${\bf a}$ and the real number $\rho $ {\itshape intercept} of ${\bf a}$.

\begin{lem}[Adamczewski and Bugeaud \cite{Adamczewski4}]
Let ${\bf a}$ be a Strumian sequence.
Then the slope of ${\bf a}$ has bounded partial quotients if and only if the Diophantine exponent of ${\bf a}$ is finite.
\end{lem}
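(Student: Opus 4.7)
The plan is to exploit the well-known combinatorial structure of Sturmian sequences provided by their Ostrowski representation with respect to the slope $\theta$. Let the continued fraction expansion of $\theta$ be $[0;\theta_1,\theta_2,\ldots]$ and let $(s_n)_{n\geq 0}$ denote the associated sequence of \emph{standard words}, defined by $s_{-1}=1$, $s_0=0$, and $s_n=s_{n-1}^{\theta_n}s_{n-2}$ for $n\geq 1$; their lengths $\ell_n$ satisfy $\ell_n=\theta_n\ell_{n-1}+\ell_{n-2}$. The characteristic Sturmian word of slope $\theta$ has every $s_n$ as a prefix, and for an arbitrary Sturmian sequence of slope $\theta$ and intercept $\rho$, the prefixes admit a canonical decomposition into standard words governed by the Ostrowski expansion of $\rho$. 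Because $\tau(0)\neq\tau(1)$, the coding $\tau$ preserves equalities and inequalities of letters and hence all repetition patterns, so it suffices to treat the underlying binary Sturmian sequence.

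For the ``only if'' direction, I assume that $(\theta_n)_{n\geq 1}$ is unbounded and pick a subsequence with $\theta_{n_k}\to\infty$. The recurrence $s_{n_k}=s_{n_k-1}^{\theta_{n_k}}s_{n_k-2}$ shows that $s_{n_k-1}^{\theta_{n_k}}$ is a factor of every sufficiently long prefix of the characteristic word, and the Ostrowski decomposition of $\rho$ shows that it occurs in ${\bf a}$ at a position of size $O(\ell_{n_k-1})$. Taking $U_k$ to be the prefix of ${\bf a}$ ending just before such an occurrence and $V_k=s_{n_k-1}$ would produce a prefix $U_kV_k^{w_k}$ of ${\bf a}$ with $w_k\geq \theta_{n_k}-O(1)$ and $|U_k|\leq C\ell_{n_k-1}$ uniformly in $k$, so that
\begin{eqnarray*}
\frac{|U_kV_k^{w_k}|}{|U_kV_k|}\geq \frac{w_k\ell_{n_k-1}}{|U_k|+\ell_{n_k-1}}\geq \frac{\theta_{n_k}-O(1)}{C+1}\longrightarrow +\infty ,
\end{eqnarray*}
forcing $\Dio({\bf a})=+\infty$.

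For the ``if'' direction, I assume $\theta_n\leq M$ for all $n$. The key combinatorial input, due to Mignosi and refined by Berth\'e--Holton--Zamboni, is that every repetition $V^w$ occurring in a Sturmian word of slope $\theta$ has $V$ essentially a conjugate of some standard word $s_n$ and exponent $w\leq\theta_{n+1}+2$. Combined with the Ostrowski decomposition of the prefixes of ${\bf a}$, this would force any prefix $UV^w$ of ${\bf a}$ with $V^w$ of length at least $2|V|$ to satisfy $|V|\asymp\ell_n$ for some $n$, $w\leq M+2$, and $|U|\gg\ell_{n-1}$; a routine computation then bounds $|UV^w|/|UV|$ by a constant depending only on $M$, yielding $\Dio({\bf a})<+\infty$.

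The hard part will clearly be the ``if'' direction, since it rests on a complete classification of the repetitions that can appear as prefixes of a Sturmian sequence, together with uniform control in terms of $M$ over their starting positions. This classification is the substance of the cited theorem of Adamczewski--Bugeaud; once granted, the remaining work, namely the transfer from the characteristic Sturmian word to an arbitrary intercept $\rho$ and the absorption of the $1$-uniform coding $\tau$ into constants, consists of standard manipulations with Sturmian morphisms.
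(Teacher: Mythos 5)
The paper does not prove this lemma; it is stated as a black-box result with a citation to Adamczewski and Bugeaud \cite{Adamczewski4}, so there is no in-paper proof to compare your attempt against. Judged on its own, your outline is aimed in the right direction but ultimately delegates the genuine work to the very references it cites.

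Two substantive remarks. For the ``if'' direction you invoke more machinery than necessary: Mignosi's theorem already gives that the index (critical exponent) $E$ of a Sturmian word of slope $\theta$ is finite precisely when $\theta$ has bounded partial quotients, and since Sturmian words of the same slope have the same language this applies to ${\bf a}$ directly. Finiteness of $\Dio({\bf a})$ then follows from the definition alone: any prefix $U V^{w}$ of ${\bf a}$ has $V^{w}$ as a factor, hence $w\leq E$, hence $|U V^{w}|/|U V|\leq \bigl(|U|+(E+1)|V|\bigr)/\bigl(|U|+|V|\bigr)\leq E+1$. The finer Berth\'e--Holton--Zamboni classification of prefix repetitions is not needed merely to prove finiteness. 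For the ``only if'' direction the real difficulty is precisely the step you pass over: the uniform bound $|U_k|\leq C\,\ell_{n_k-1}$ for an \emph{arbitrary} intercept $\rho$. That $s_{n_k-1}^{\theta_{n_k}}$ occurs somewhere in ${\bf a}$ is immediate from shared language, but that it (or a conjugate power of comparable exponent) occurs within distance $O(\ell_{n_k-1})$ of the origin with a constant independent of $k$ requires controlling the prefix stripped at each stage of the Sturmian desubstitution; if that first occurrence were instead at distance of order $\ell_{n_k}\asymp\theta_{n_k}\ell_{n_k-1}$, the ratio $|U_kV_k^{w_k}|/|U_kV_k|$ would stay bounded. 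Writing ``the Ostrowski decomposition of $\rho$ shows'' this asserts the conclusion rather than proves it, and that is exactly where the content of the cited theorem lies.
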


It is known that automatic sequences, primitive morphic sequences, and Strumian sequences have low complexity.

\begin{lem}
Let $k\geq 2$ be an integer and ${\bf a}=(a_n)_{n\geq 0}$ be a $k$-automatic sequence.
Let $d$ be a cardinality of the internal alphabet associated with ${\bf a}$.
Then we have
\begin{eqnarray*}
p({\bf a}, n) \leq k d^2 n, \mbox{ for } n\geq 1.
\end{eqnarray*}
\end{lem}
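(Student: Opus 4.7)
The plan is to reduce to the $k$-uniform morphic case via Cobham's theorem and then count factors using the canonical block decomposition of the fixed point. By Cobham's theorem (recalled above), we may write ${\bf a} = \tau({\bf b})$, where ${\bf b} = \lim_{n \to \infty} \sigma^n(a)$ is the fixed point of a $k$-uniform morphism $\sigma : \mathcal{A}^* \to \mathcal{A}^*$ prolongable on some $a \in \mathcal{A}$ with $|\mathcal{A}| = d$, and $\tau$ is a coding. Since codings act letter-by-letter and therefore cannot increase the complexity, one has $p({\bf a}, n) \leq p({\bf b}, n)$ for every $n \geq 1$, and it suffices to prove $p({\bf b}, n) \leq k d^2 n$.

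The key combinatorial step exploits that ${\bf b} = \sigma(b_0)\sigma(b_1)\sigma(b_2)\cdots$ is a concatenation of blocks of length $k$. Any factor $w$ of ${\bf b}$ of length $n$ appears at some position $j$, which can be written uniquely as $j = sk + r$ with $0 \leq r < k$, and then $w$ is the length-$n$ subword starting at offset $r$ within $\sigma(b_s \cdots b_{s+\ell_r - 1})$, where $\ell_r := \lceil (n+r)/k \rceil$. Consequently $w$ is uniquely determined by the pair $(r, v)$ with $v = b_s b_{s+1} \cdots b_{s+\ell_r -1}$ a factor of ${\bf b}$, yielding the recursion
\begin{equation*}
p({\bf b}, n) \leq \sum_{r=0}^{k-1} p({\bf b}, \ell_r).
\end{equation*}
For the base range $1 \leq n \leq k$ one checks that $\ell_r \leq 2$ for every $r$, so the right-hand side is at most $k \cdot p({\bf b}, 2) \leq k d^2 \leq k d^2 n$.

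For $n > k$, each $\ell_r$ is strictly less than $n$ and the proof proceeds by strong induction, exploiting the elementary identity $\sum_{r=0}^{k-1} \ell_r = n + k - 1$ (obtained by writing $n = qk + s$ with $0 \leq s < k$ and tallying contributions by the value of $\lceil(s+r)/k\rceil$). The main obstacle is that a naive substitution of the inductive bound into this recursion only produces $k d^2 (n + k - 1)$, which exceeds the target by a constant amount; closing with the sharp constant $k d^2$ forces one to replace the inductive estimate by the tighter bounds $p({\bf b}, 1) \leq d$ and $p({\bf b}, 2) \leq d^2$ for those indices $r$ with $\ell_r \in \{1,2\}$ that arise at the boundary of the recursion, whose cumulative saving precisely offsets the $k-1$ overshoot. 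This careful boundary accounting is the combinatorial heart of Cobham's classical result on the subword complexity of $k$-automatic sequences.
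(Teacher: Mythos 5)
The paper gives no proof of this lemma: it simply cites Allouche--Shallit, Theorem~10.3.1 (or Cobham), so the comparison is really between your argument and the standard one.

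Your setup is sound: the reduction to the fixed point ${\bf b}$ of a $k$-uniform morphism via Cobham's theorem, the observation that a coding cannot increase complexity, the one-step block decomposition giving
\begin{equation*}
p({\bf b},n)\le \sum_{r=0}^{k-1}p({\bf b},\ell_r),\qquad \ell_r=\Bigl\lceil \frac{n+r}{k}\Bigr\rceil,
\end{equation*}
and the identity $\sum_{r=0}^{k-1}\ell_r=n+k-1$ are all correct. The problem is the way you propose to close the induction. You say the overshoot $k-1$ is ``precisely offset'' by using the tighter bounds $p({\bf b},1)\le d$, $p({\bf b},2)\le d^2$ ``for those indices $r$ with $\ell_r\in\{1,2\}$ that arise at the boundary of the recursion.'' But once $n>2k$ one has $\ell_r=\lceil (n+r)/k\rceil\ge \lceil n/k\rceil\ge 3$ for every $r$, so no such indices exist and there is no saving at all: the naive substitution $p({\bf b},\ell_r)\le kd^2\ell_r$ produces $kd^2(n+k-1)$, which strictly exceeds $kd^2n$, and no per-term improvement from $\ell_r\in\{1,2\}$ is available. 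Concretely, for $n=3k$ the recursion gives $p(3k)\le p(3)+(k-1)p(4)$, and the bounds $p(3)\le 3kd^2$, $p(4)\le 4kd^2$ only yield $kd^2(4k-1)>3k^2d^2$.

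The fix within your framework is to strengthen the strong induction hypothesis to $p({\bf b},\ell)\le kd^2(\ell-1)$ for all $\ell\ge 2$ (not merely $\le kd^2\ell$). Then for $n>k$ every $\ell_r$ lies in $[2,n)$, and
\begin{equation*}
p({\bf b},n)\le \sum_{r=0}^{k-1}kd^2(\ell_r-1)=kd^2\Bigl(\sum_{r}\ell_r-k\Bigr)=kd^2(n-1),
\end{equation*}
so the $-kd^2$ in each of the $k$ summands exactly cancels the $kd^2(k-1)$ overshoot; the base cases $2\le n\le k$ are checked directly from $p({\bf b},n)\le (k-n+1)d+(n-1)d^2\le kd^2(n-1)$. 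Alternatively, the classical argument that the cited Theorem~10.3.1 uses avoids induction entirely: take $m$ with $k^{m-1}<n\le k^m$, write any occurrence position as $qk^m+r$ with $0\le r<k^m$, observe that the factor is contained in $\sigma^m(b_qb_{q+1})$, and conclude $p({\bf b},n)\le k^m d^2<knd^2$ in a single step. Either of these closes the gap; as written, your boundary accounting does not.
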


\begin{proof}
See \cite[Theorem 10.3.1]{Allouche} or \cite{Cobham}.
\end{proof}

\begin{lem}
Let ${\bf a}=(a_n)_{n\geq 0}$ be a primitive morphic sequence over a finite set of cardinality of $b\geq 2$.
Let $v$ be the width of $\sigma $ which generates the sequence ${\bf a}$.
Then we have
\begin{eqnarray*}
p({\bf a},n)\leq 2 v^{4b-2} b^3 n, \mbox{ for } n\geq 1.
\end{eqnarray*}
\end{lem}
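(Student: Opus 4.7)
The plan is to use a desubstitution argument powered by the primitivity of $\sigma$. First I would reduce to bounding the complexity of the pure morphic fixed point $\sigma^{\infty}(a)$: since the coding $\tau$ is $1$-uniform, one has $p({\bf a}, n) \leq p(\sigma^{\infty}(a), n)$, so it suffices to establish the asserted bound in the case ${\bf a} = \sigma^{\infty}(a)$.

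The central ingredient is that primitivity forces the lengths $|\sigma^{N}(a')|$ to be uniformly comparable across all letters $a' \in \mathcal{A}$. By a Wielandt-type theorem applied to the incidence matrix $M$ of $\sigma$, there is an integer $s$, explicitly bounded in terms of $b$, such that $M^{s} > 0$ entrywise; every letter of $\mathcal{A}$ then occurs in $\sigma^{s}(a')$ for every $a' \in \mathcal{A}$, from which one deduces
\begin{equation*}
\max_{a' \in \mathcal{A}} |\sigma^{N}(a')| \leq v^{s} \min_{a' \in \mathcal{A}} |\sigma^{N}(a')| \quad \text{for } N \geq s.
\end{equation*}

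With this comparability in hand, for each $n \geq 1$ I would choose $N$ minimal with $\min_{a'} |\sigma^{N}(a')| > n$. Any factor $w$ of length $n$ in $\sigma^{\infty}(a)$ then appears as a subword of $\sigma^{N}(a_{1}' a_{2}')$ for some length-$2$ factor $a_{1}' a_{2}'$ of $\sigma^{\infty}(a)$, of which there are at most $b^{2}$. The number of admissible positions of $w$ inside $\sigma^{N}(a_{1}' a_{2}')$ is at most $2 \max_{a'} |\sigma^{N}(a')|$, which by the minimality of $N$ and the comparability from the previous step is at most $2 v^{s+1} n$. Combining these estimates yields a linear bound of the shape $p({\bf a}, n) \leq 2 b^{2} v^{s+1} n$.

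The main technical obstacle is matching the explicit constants $2 v^{4b-2} b^{3}$ in the statement. Reaching the exponent $4b-2$ requires a sharpened form of the Wielandt-type bound for $s$, or a more economical accounting in which $N$ is inflated in two separate stages (once to establish the uniform lower bound $\min_{a'} |\sigma^{N}(a')| > n$, and once to compare with $\sigma^{N-1}$), each contributing at most $v^{2b-2}$; the cubic factor $b^{3}$ should absorb the count of length-$2$ factors in $\sigma^{\infty}(a)$ together with the width of the coding $\tau$ and the edge effects of the desubstitution. I would carry out the precise bookkeeping following \cite[Chapter 10]{Allouche}.
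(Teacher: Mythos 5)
The paper does not prove this lemma at all; the given proof is the single citation to \cite[Theorem 10.4.12]{Allouche}, and the constant $2v^{4b-2}b^3$ is simply read off from the argument there. Your desubstitution outline is the standard argument behind that citation---reduce to the pure fixed point $\sigma^{\infty}(a)$ via the coding $\tau$, use primitivity to make $|\sigma^{N}(\cdot)|$ uniformly comparable across letters, then count length-$n$ factors as subwords of $\sigma^{N}$ applied to length-$2$ factors of $\sigma^{\infty}(a)$---so the framework agrees with the source the paper relies on.

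The genuine gap is the one you flag yourself: you never derive the constant, and for this lemma the constant is the entire content, since linearity of $p({\bf a},\cdot)$ for primitive morphic ${\bf a}$ is classical. Two concrete remarks. First, the plain Wielandt index $(b-1)^2+1$ that you gesture at would contribute a factor $v^{b^2-2b+3}$, which for $b\geq 6$ strictly exceeds $v^{4b-2}$, so Wielandt alone cannot reproduce the stated exponent; what you need is the sharper index bound $s\leq 2b-2$ valid for a primitive $b\times b$ nonnegative matrix with at least one positive diagonal entry, which applies here because $\sigma$ is prolongable on $a$ and hence $a$ occurs in $\sigma(a)$. Second, with $s=2b-2$ your own count produces $2b^2 v^{2b-1}n$, from which the asserted $2b^3 v^{4b-2}n$ follows only after observing $b^2\leq b^3$ and $v^{2b-1}\leq v^{4b-2}$ (and after handling the small-$n$ regime where $N-1<s$ and the comparability estimate is not yet available); none of this bookkeeping is carried out in the proposal. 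So: right strategy, the same as the cited source, but as written the specific inequality asserted in the lemma is not established.
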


\begin{proof}
See \cite[Theorem 10.4.12]{Allouche}.
\end{proof}

\begin{lem}
Let ${\bf a}$ be a Sturmian sequence.
Then we have
\begin{eqnarray*}
p({\bf a}, n)=n+1, \mbox{ for } n\geq 1.
\end{eqnarray*}
\end{lem}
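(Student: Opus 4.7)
The plan is to reduce to a binary Sturmian sequence and then establish the upper bound $p({\bf a}, n) \leq n+1$ and the lower bound $p({\bf a}, n) \geq n+1$ separately.

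First, since $\tau(0) \neq \tau(1)$, the coding $\tau$ is injective on the alphabet $\{0,1\}$ and therefore preserves the complexity function. Hence I may assume ${\bf a} = (s_{n,\theta,\rho})_{n\geq 1}$ with values in $\{0,1\}$; the variant $(s'_{n,\theta,\rho})$ is handled identically. A direct computation shows $s_{n,\theta,\rho} = f(\{n\theta + \rho\})$, where $\{x\}$ denotes the fractional part and $f\colon [0,1)\to\{0,1\}$ is the indicator of the arc $[1-\theta, 1)$.

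For the upper bound, observe that the factor of length $n$ starting at position $k$ is the tuple $(f(x_k), f(x_k+\theta),\ldots, f(x_k+(n-1)\theta))$, where $x_k=\{k\theta+\rho\}$ and additions are taken on the circle $\mathbb{R}/\mathbb{Z}$. The subset of the circle on which $x\mapsto f(x+j\theta)$ is constant has boundary $\{\{-j\theta\},\{-(j+1)\theta\}\}$; taking the union for $0\le j\le n-1$ yields the $n+1$ points $\{\{-j\theta\}:0\le j\le n\}$, which are pairwise distinct because $\theta$ is irrational. These points split the circle into exactly $n+1$ arcs, and the factor is constant on each arc, so $p({\bf a}, n)\le n+1$. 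For the lower bound, ${\bf a}$ is not ultimately periodic: a period $q$ would force $f(x_k)=f(x_k+q\theta)$ for all large $k$, contradicting the density on the circle of the orbit $\{x_k+q\theta\}$ around the boundary point $1-\theta$ (density follows from the irrationality of $q\theta$). The Morse–Hedlund theorem, which asserts that a sequence is ultimately periodic if and only if $p({\bf a}, n)\le n$ for some $n\ge 1$, then yields $p({\bf a}, n)\ge n+1$ for every $n\ge 1$. Combining the two inequalities gives the equality $p({\bf a}, n) = n+1$.

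The main obstacle is the boundary bookkeeping in the upper bound: one must verify that the $2n$ endpoints coming from the $n$ rotated half-arcs collapse to the $n+1$ distinct points $\{-j\theta\}$ with $0\le j\le n$, and that this partition is truly the common refinement on which every factor is constant. Everything else is classical — the reduction to a binary alphabet is immediate, and the lower bound is a direct application of Morse–Hedlund once non-periodicity is established from the irrationality of $\theta$.
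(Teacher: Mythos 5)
Your proof is correct, but there is nothing in the paper to compare it against: the paper does not prove this lemma, it simply cites Theorem~10.5.8 of Allouche and Shallit. You have supplied the standard complete argument: reduce via the injective coding to the binary rotation sequence $s_{n,\theta,\rho}=\mathbf{1}_{[1-\theta,1)}(\{n\theta+\rho\})$, obtain the upper bound $p({\bf a},n)\le n+1$ from the partition of $\mathbb{R}/\mathbb{Z}$ by the $n+1$ distinct points $\{-j\theta\}$, $0\le j\le n$, and get the lower bound from Morse--Hedlund once aperiodicity is known. One spot worth tightening is the aperiodicity step, which as written is a bit hand-wavy: the clean statement is that for a putative period $q$, the symmetric difference of the arcs $[1-\theta,1)$ and $[1-\theta,1)-q\theta$ in $\mathbb{R}/\mathbb{Z}$ is a non-empty finite union of arcs of positive length (because $q\theta\notin\mathbb{Z}$), and since the tail orbit $\{x_k\}_{k\ge K}$ is dense it must enter that symmetric difference, giving some $k$ with $f(x_k)\ne f(x_k+q\theta)$, i.e.\ $a_k\ne a_{k+q}$. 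With that sentence inserted the argument is complete and self-contained, which is arguably preferable to the paper's bare citation.
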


\begin{proof}
See \cite[Theorem 10.5.8]{Allouche}.
\end{proof}

Consequently, by Theorem \ref{main1} and \ref{main2}, we obtain the upper bound of $w_2$ of automatic, primitive morphic, or Strumian continued fractions as follows:

\begin{thm}
Let $k\geq 2$ be an integer.
Let ${\bf a}=(a_n)_{n\geq 0}$ be a non-ultimately periodic and $k$-automatic sequence over $\lF _q[T]$ with $\deg a_n \geq 1$ for all $n\geq 0$.
Let $A$ be an upper bound of the sequence $(|a_n|)_{n\geq 0}$, $m$ be a cardinality of $k$-kernel of ${\bf a}$, and $d$ be a cardinality of the initial alphabet associated with ${\bf a}$.
Set $\xi :=[0,a_0,a_1,\ldots ]$.
Then we have 
\begin{eqnarray*}
w_2 (\xi )\leq 128 (2 k d^2+1)^3 k^m \left( \frac{\log A}{\log q} \right) ^4.
\end{eqnarray*}
\end{thm}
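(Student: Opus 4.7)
The plan is to verify that $\xi$ satisfies the hypotheses of Theorem \ref{main1} and then substitute explicit bounds for $\kappa$ and $\Dio({\bf a})$ coming from the structural results on $k$-automatic sequences recalled earlier in this section. Since everything needed has already been isolated as a lemma, this is essentially a bookkeeping argument rather than a proof requiring new ideas.

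First I would check the size hypothesis $q \leq |a_n| \leq A$: the lower bound is immediate from $\deg a_n \geq 1$, which forces $|a_n| \geq q$, and the upper bound holds by the definition of $A$. Next I would produce the value of $\kappa$ in Theorem \ref{main1}. By the complexity lemma for $k$-automatic sequences (the one attributed to Cobham in the excerpt), we have $p({\bf a},n) \leq k d^2 n$ for every $n\geq 1$, so we may take $\kappa = kd^2$ and $n_0=1$. Third, I would invoke the Adamczewski--Cassaigne lemma to conclude $\Dio({\bf a}) < k^m$; this simultaneously verifies the finiteness of the Diophantine exponent needed to apply Theorem \ref{main1} and gives an explicit upper bound we can plug in.

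With these three pieces in hand, Theorem \ref{main1} applies and yields
\begin{equation*}
w_2(\xi) \leq 128 (2\kappa + 1)^3 \Dio({\bf a}) \left(\frac{\log A}{\log q}\right)^4 < 128 (2 k d^2 + 1)^3 k^m \left(\frac{\log A}{\log q}\right)^4,
\end{equation*}
which is the claimed inequality.

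There is no real obstacle here; the only minor point to be careful about is the indexing convention (the theorem is stated with ${\bf a} = (a_n)_{n\geq 0}$ while Theorem \ref{main1} is stated for $(a_n)_{n\geq 1}$, but shifting by one index affects neither the complexity function, the cardinality of the $k$-kernel, the Diophantine exponent, nor the value of $w_2(\xi)$), and the observation that $A \geq q$ since $|a_n|\geq q$ for all $n$, so that the hypothesis $A\geq q$ of Theorem \ref{main1} is automatically satisfied.
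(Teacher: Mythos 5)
Your proof is correct and follows exactly the route the paper intends: the theorem is stated in Section \ref{Automatic sec} as an immediate consequence of Theorem \ref{main1} combined with the complexity bound $p({\bf a},n)\leq kd^2 n$ and the Diophantine-exponent bound $\Dio({\bf a})<k^m$ of Adamczewski--Cassaigne. The bookkeeping details you flag (the index shift from $n\geq 0$ to $n\geq 1$, the automatic verification of $\kappa=kd^2\geq 2$ and $A\geq q$, and that non-ultimate periodicity is exactly what licenses the Adamczewski--Cassaigne lemma) are all handled correctly.
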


\begin{thm}
Let ${\bf a}=(a_n)_{n\geq 0}$ be a non-ultimately and primitive morphic sequence over $\lF _q [T]$ with $\deg a_n \geq 1$ for all $n\geq 0$, which is generated by a primitive morphism $\sigma $ over a finite set of cardinality $b\geq 2$.
Let $v$ be the width of $\sigma $.
Set $\xi :=[0,a_0,a_1,\ldots ]$.
Then we have
\begin{eqnarray*}
w_2 (\xi )\leq 128 (4 v^{4 b-2}b^3+1)^3 \Dio ({\bf a}) \left( \frac{\log A}{\log q} \right) ^4.
\end{eqnarray*}
\end{thm}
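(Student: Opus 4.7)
The plan is to derive this theorem as an immediate application of Theorem \ref{main1}, whose hypotheses I would verify one by one for the primitive morphic sequence ${\bf a}$.

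First I would handle the size condition on the terms $a_n$. Since ${\bf a}$ is, by definition, the image under a coding $\tau$ of the infinite fixed point of the primitive morphism $\sigma$ on a finite alphabet, only finitely many distinct elements of $\lF_q[T]$ occur among the $a_n$. Consequently $(|a_n|)_{n\geq 0}$ is bounded; let $A$ be an upper bound. The standing hypothesis $\deg a_n \geq 1$ gives $|a_n| \geq q$, so $q \leq |a_n| \leq A$, matching the size hypothesis of Theorem \ref{main1}.

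Next I would extract the complexity constant $\kappa$ from the primitive morphic complexity lemma stated above, which yields $p({\bf a}, n) \leq 2 v^{4b-2} b^3 \cdot n$ for every $n \geq 1$. Hence one may take $\kappa := 2 v^{4b-2} b^3$ and $n_0 := 1$ in Theorem \ref{main1}. The remaining hypothesis, finiteness of $\Dio({\bf a})$, is supplied by the Moss\'e-based lemma recalled earlier in this section, applied to the non-ultimately periodic primitive morphic sequence ${\bf a}$.

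With every hypothesis of Theorem \ref{main1} in place, inequality (\ref{main1.1}) gives
\begin{eqnarray*}
w_2(\xi) \leq 128 (2\kappa + 1)^3 \Dio({\bf a}) \left(\frac{\log A}{\log q}\right)^4 = 128 (4 v^{4b-2} b^3 + 1)^3 \Dio({\bf a}) \left(\frac{\log A}{\log q}\right)^4,
\end{eqnarray*}
which is exactly the claimed bound. There is no substantial obstacle to this argument: everything reduces to matching the hypotheses of Theorem \ref{main1} to the primitive morphic setting, and each required input (finite range of $(|a_n|)$, linear complexity bound, finite Diophantine exponent) has already been established in the lemmas above.
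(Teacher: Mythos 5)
Your proof is correct and follows exactly the route the paper intends: the paper prefaces this theorem with the lemmas giving finiteness of $\Dio({\bf a})$ (via Moss\'e) and the linear complexity bound $p({\bf a},n)\leq 2v^{4b-2}b^3 n$, and then states that the result follows "by Theorem \ref{main1}." Your verification that $\kappa := 2v^{4b-2}b^3 \geq 2$, $n_0 := 1$, and $q \leq |a_n| \leq A$ (so $A\geq q$) satisfy the hypotheses of Theorem \ref{main1}, together with the observation that the primitive morphic sequence has finite range and hence bounded $|a_n|$, is precisely the missing instantiation, and the algebra $128(2\kappa+1)^3 = 128(4v^{4b-2}b^3+1)^3$ is right.
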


\begin{thm}
Let ${\bf a}=(a_n)_{n\geq 0}$ be a non-ultimately and Strumian sequence over $\lF _q [T]$ with $\deg a_n \geq 1$ for all $n\geq 0$.
Set $\xi :=[0,a_0,a_1,\ldots ]$.
Then we have
\begin{eqnarray*}
w_2 (\xi )\leq 16000\Dio ({\bf a}) \left( \frac{\log A}{\log q} \right) ^4
\end{eqnarray*}
if the slope of ${\bf a}$ has bounded partial quotients, and we have $w_2(\xi )=+\infty $ otherwise.
\end{thm}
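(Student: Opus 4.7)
The plan is to apply Theorems \ref{main1} and \ref{main2}, together with the two lemmas about Sturmian sequences (their linear complexity $p(\mathbf{a},n)=n+1$ and the characterization of finite Diophantine exponent via bounded partial quotients of the slope) stated just above.

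First I would set up the inputs to Theorem \ref{main1}. Since $\mathbf{a}$ is Sturmian, it takes only finitely many values (the image of a coding from $\{0,1\}^{*}$), so the sequence $(|a_n|)_{n\geq 0}$ is bounded by some $A\geq q$ (using $\deg a_n\geq 1$ to guarantee $|a_n|\geq q$). The complexity lemma gives $p(\mathbf{a},n)=n+1\leq 2n$ for all $n\geq 1$, so the complexity hypothesis of Theorem \ref{main1} is satisfied with $\kappa=2$ and $n_0=1$.

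For the first assertion, assume the slope of $\mathbf{a}$ has bounded partial quotients. The Adamczewski--Bugeaud lemma then gives $\Dio(\mathbf{a})<+\infty$, so all hypotheses of Theorem \ref{main1} are met, and the bound \eqref{main1.1} yields
\begin{eqnarray*}
w_2(\xi)\leq 128(2\cdot 2+1)^{3}\Dio(\mathbf{a})\left(\frac{\log A}{\log q}\right)^{4}=16000\,\Dio(\mathbf{a})\left(\frac{\log A}{\log q}\right)^{4},
\end{eqnarray*}
which is the desired inequality.

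For the second assertion, suppose the slope of $\mathbf{a}$ does not have bounded partial quotients. Then the same Adamczewski--Bugeaud lemma gives $\Dio(\mathbf{a})=+\infty$. Since $\mathbf{a}$ is non-ultimately periodic (a Sturmian sequence is never ultimately periodic because its slope is irrational) and $\deg a_n\geq 1$, we may apply Theorem \ref{main2}: boundedness of $(|a_n|)_{n\geq 0}$ combined with the recurrence $q_n=a_n q_{n-1}+q_{n-2}$ and the ultrametric identity $|q_n|=|a_n||q_{n-1}|$ shows that $(|q_n|^{1/n})_{n\geq 1}$ is bounded, so the quantities $m$ and $M$ in Theorem \ref{main2} are finite and positive. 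Estimate \eqref{lDio2} then gives
\begin{eqnarray*}
w_2(\xi)\geq \frac{\log m}{\log M}\bigl(\Dio(\mathbf{a})+1\bigr)-1=+\infty,
\end{eqnarray*}
which completes the proof. There is no real obstacle here: the argument is a direct specialisation of the two main theorems to the Sturmian setting, and the only point requiring minor care is checking, in the unbounded-partial-quotient case, that the ratio $\log m/\log M$ is a strictly positive finite constant so that multiplying by $\Dio(\mathbf{a})=+\infty$ indeed produces $+\infty$.
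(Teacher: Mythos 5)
Your proposal is correct and coincides with the (implicit) argument in the paper: the paper simply states that these application theorems follow by combining Theorems \ref{main1}/\ref{main2} with the complexity lemma $p(\mathbf{a},n)=n+1$ and the Adamczewski--Bugeaud characterisation of finite $\Dio(\mathbf{a})$, and your write-out does exactly this, including the correct arithmetic $128(2\cdot 2+1)^3=16000$. One small remark: in the unbounded-partial-quotient case you could have applied \eqref{lDio} directly (which does not need the extra boundedness hypothesis), since $\frac{\log m}{\log M}\Dio(\mathbf{a})-1=+\infty$ already; your use of \eqref{lDio2} is fine but slightly more than needed.
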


\section{Liouville inequalities}\label{Liouville sec}

The following lemma is well-known and immediately seen.

\begin{lem}\label{height}
Let $P(X)$ be in $(\lF _q[T])[X]$.
Assume that $P(X)$ can be factorized as
\begin{eqnarray*}
P(X)=A\prod_{i=1}^{n} (X-\alpha _i),
\end{eqnarray*}
where $A\in \lF _q[T]$ and $\alpha _i \in \overline{\lF _q(T)}$ for $1\leq i\leq n$.
Then we have
\begin{align*}
H(P)=|A|\prod_{i=1}^{n} \max (1, |\alpha _i|).
\end{align*}
Furthermore, for $P(X), Q(X) \in (\lF _q[T])[X]$, we have
\begin{align*}
H(P Q)=H(P)H(Q).
\end{align*}
\end{lem}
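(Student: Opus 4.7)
The plan is to establish the height formula first and then deduce the multiplicativity as an immediate consequence. The key ingredient is the non-archimedean property of the absolute value $|\cdot |$ on the algebraic closure of $\lF _q((T^{-1}))$, which contains $\overline{\lF _q(T)}$ and hence all the relevant coefficients and roots.

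I would first prove the following special case by direct computation: for any polynomial $R(X)=\sum _i r_i X^i$ with coefficients in the algebraic closure of $\lF _q((T^{-1}))$ and any $\alpha $ in that closure,
\[ H(R(X)(X-\alpha ))=H(R)\cdot \max (1,|\alpha |). \]
Expanding $R(X)(X-\alpha )=\sum _k (r_{k-1}-\alpha r_k)X^k$ (with $r_{-1}=r_{\deg R+1}=0$), the bound $H(R(X-\alpha ))\leq H(R)\max (1,|\alpha |)$ follows immediately from the ultrametric inequality. For the reverse inequality, I would split into two cases: if $|\alpha |\leq 1$, pick $i_0$ maximal with $|r_{i_0}|=H(R)$, so that $|r_{i_0+1}|<H(R)$ (or $r_{i_0+1}=0$); then the coefficient of $X^{i_0+1}$ is $r_{i_0}-\alpha r_{i_0+1}$ with $|r_{i_0}|=H(R)>|\alpha r_{i_0+1}|$, and by the strong non-archimedean triangle inequality this coefficient has absolute value exactly $H(R)$. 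If $|\alpha |>1$, pick $i_0$ minimal with $|r_{i_0}|=H(R)$; then the coefficient of $X^{i_0}$ is $r_{i_0-1}-\alpha r_{i_0}$ with $|\alpha r_{i_0}|=|\alpha |H(R)>|r_{i_0-1}|$, so that coefficient has absolute value $|\alpha |H(R)$.

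Second, by induction on $n$ using this special case I obtain
\[ H\!\left( \prod _{i=1}^{n}(X-\alpha _i)\right) =\prod _{i=1}^{n}\max (1,|\alpha _i|), \]
and then multiplying by the scalar $A\in \lF _q[T]$ scales every coefficient by $|A|$, yielding $H(P)=|A|\prod _{i=1}^{n}\max (1,|\alpha _i|)$, which is the first identity.

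Finally, for the multiplicativity $H(PQ)=H(P)H(Q)$ with $P,Q\in (\lF _q[T])[X]$, I factor both polynomials over $\overline{\lF _q(T)}$ as $P(X)=A_P\prod _i(X-\alpha _i)$ and $Q(X)=A_Q\prod _j(X-\beta _j)$; concatenating the factorisations gives $PQ=(A_PA_Q)\prod _i(X-\alpha _i)\prod _j(X-\beta _j)$, and applying the first identity to $P$, $Q$, and $PQ$ produces the equality at once. The main obstacle is purely the bookkeeping in the case analysis of the special case---choosing on which side to extremise the index $i_0$ and verifying that exactly one term dominates so that the strong ultrametric inequality yields an equality; there is no genuine difficulty beyond this.
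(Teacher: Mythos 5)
Your proof is correct, and it is the standard argument for Gauss's lemma for non-archimedean heights. For the record, the paper does not supply a proof at all---it states Lemma~\ref{height} as ``well-known and immediately seen''---so there is no authorial argument to compare against; the route you take (prove $H(R(X)(X-\alpha))=H(R)\max(1,|\alpha|)$ by the ultrametric inequality with a two-case choice of the extremal index, induct on the number of linear factors, rescale by $|A|$, then deduce multiplicativity by concatenating the splittings of $P$ and $Q$ over $\overline{\lF_q(T)}$) is exactly the canonical one. One small point you should make explicit: the intermediate steps require $H$ to be defined for polynomials with coefficients in $\overline{\lF_q((T^{-1}))}$ rather than merely in $\lF_q[T]$, since the partial products $\prod_{i\le k}(X-\alpha_i)$ generally have coefficients outside $\lF_q[T]$; the same definition (maximum of the extended absolute values of the coefficients) works, and the upper and lower bounds in your case analysis use nothing beyond the ultrametric inequality and its equality case, but the reader should be told this extension is in force.
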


The lemma below is an analogue of Theorem A.1 in \cite{Bugeaud1}.

\begin{prop}\label{Liouville inequ1}
Let $ P(X), Q(X) \in (\lF_q[T])[X]$ be non-constant polynomials of degree $m,n$, respectively.
Let $\alpha $ be a root of $P(X)$ of order $t$ and $\beta $ be a root of $Q(X)$ of order $u$.
Assume that $P(\beta )\neq 0$.
Then we have
\begin{eqnarray}\label{Liouville0}
|P(\beta )|\geq \max (1,|\beta |)^m H(P)^{-n/u+1} H(Q)^{-m/u}.
\end{eqnarray}
Furthermore, we have
\begin{gather}\label{Liouville1}
|\alpha -\beta |\geq \max (1, |\alpha |) \max (1, |\beta |) H(P)^{-n/t u} H(Q)^{-m/t u}.
\end{gather}
\end{prop}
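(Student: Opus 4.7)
The plan is to prove \eqref{Liouville0} via a resultant computation, then deduce \eqref{Liouville1} from it by isolating the factor $(\beta-\alpha)^t$ in $P(\beta)$.

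For \eqref{Liouville0}, I first reduce to the case $Q = Q_1$, where $Q_1 \in (\lF_q[T])[X]_{\min}$ is the minimal polynomial of $\beta$. Let $n_1 = \deg Q_1$ and let $u_1$ be the multiplicity of $\beta$ as a root of $Q_1$. Let $k$ be the largest integer with $Q_1^{k} \mid Q$ in $\lF_q[T][X]$ (possible by Gauss's lemma, since $Q_1$ is primitive). Setting $R := Q/Q_1^k \in \lF_q[T][X]$, maximality of $k$ forces $Q_1 \nmid R$, hence $R(\beta) \neq 0$ (as $Q_1$ is the minimal polynomial of $\beta$), so $u = k u_1$. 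In particular $n \geq k n_1 = (u/u_1) n_1$ and $H(Q) = H(Q_1)^k H(R) \geq H(Q_1)^{u/u_1}$ by Lemma \ref{height} and $H(R) \geq 1$. Together with $H(P)\geq 1$, these comparisons show that the claim with data $(n, u, H(Q))$ is weaker than the same claim with data $(n_1, u_1, H(Q_1))$, so it suffices to treat $Q = Q_1$.

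In that case $Q$ is irreducible and $Q \nmid P$ (since $P(\beta) \neq 0$), so $\gcd(P, Q) = 1$ and $\Res(P, Q) \in \lF_q[T]$ is nonzero, giving $|\Res(P, Q)| \geq 1$. Writing $Q(X) = b \prod_{j=1}^n (X - \beta_j)$ in $\overline{\lF_q(T)}[X]$ with $\beta_1 = \dots = \beta_{u} = \beta$, the standard identity
\begin{eqnarray*}
|\Res(P, Q)| = |b|^m \prod_{j=1}^n |P(\beta_j)|,
\end{eqnarray*}
combined with the ultrametric bound $|P(\beta_j)| \leq H(P) \max(1, |\beta_j|)^m$ (from the factorization of $P$ and Lemma \ref{height}) and $\prod_{j=1}^n \max(1, |\beta_j|) = H(Q)/|b|$ (Lemma \ref{height} applied to $Q$), lets one isolate $|P(\beta)|^u$ and extract \eqref{Liouville0} by taking a $u$-th root.

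For \eqref{Liouville1}, factor $P(X) = a \prod_{i=1}^m (X - \alpha_i)$, separate the factor $(\beta - \alpha)^t$, and bound each remaining factor via $|\beta - \alpha_i| \leq \max(1, |\beta|) \max(1, |\alpha_i|)$. Using also $\prod_i \max(1, |\alpha_i|) = H(P)/|a|$ from Lemma \ref{height}, one obtains an upper bound on $|P(\beta)|$ in terms of $|\beta - \alpha|^t$; rearranging and substituting the lower bound \eqref{Liouville0} yields
\begin{eqnarray*}
|\beta - \alpha|^t \geq \max(1, |\alpha|)^t \max(1, |\beta|)^t H(P)^{-n/u} H(Q)^{-m/u},
\end{eqnarray*}
and taking $t$-th roots produces \eqref{Liouville1}. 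The main technical obstacle is the first-paragraph reduction: it rests on the fact that any polynomial in $\lF_q[T][X]$ vanishing at $\beta$ is divisible by $Q_1$, which forces $u_1 \mid u$ and makes the division $Q = Q_1^{u/u_1} R$ genuinely happen in $\lF_q[T][X]$, so that the required height and degree inequalities follow uniformly without separate treatment of the separable and inseparable cases.
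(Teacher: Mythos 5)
Your proof is correct and follows essentially the same route as the paper: both hinge on bounding $|\Res(P,Q_1)|$ from below by $1$ with $Q_1$ the minimal polynomial of $\beta$, using Lemma \ref{height} and the divisibility $Q_1^{u/u_1}\mid Q$ in $(\lF_q[T])[X]$ to pass from $Q_1$ back to $Q$, and then deducing \eqref{Liouville1} from \eqref{Liouville0} by isolating the factor $(\beta-\alpha)^t$ in $P(\beta)$. Your explicit first-paragraph reduction to $Q=Q_1$ is merely a cleaner packaging of the paper's inline use of the inequalities $H(Q_1)^{u/g}\leq H(Q)$ and $s_1 u\leq n$ (in the paper's notation $g=u_1$, $s_1=n_1/u_1$).
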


\begin{proof}
Write $P(X)=A\prod_{i=1}^{r} (X-\alpha _i)^{t_i}$ and $Q(X)=B\prod_{i=1}^{s} (X-\beta _i)^{u_i}$, where $\alpha =\alpha _1, \beta =\beta _1, t=t_1 , u=u_1$, and $\alpha $'s (resp.\ $\beta $'s) are pairwise distinct.
Let $Q_1(X)=B_1\prod_{i=1}^{s_1} (X-\beta ^{(i)})^g$ be the minimal polynomial of $\beta $, where $\beta ^{(1)} =\beta , g=\insep \beta $, and $\beta ^{(i)}$'s are pairwise distinct.
Since $P$ and $Q_1$ do not have common roots, the resultant $\Res (P,Q_1)$ is non-zero and is in $\lF _q[T]$.
Therefore, by $H(Q_1)^{u/g} \leq H(Q)$ and $s_1 u\leq n$, we obtain
\begin{eqnarray*}
1 & \leq & |\Res (P,Q_1)| = |B_1|^m \prod_{i=1}^{s_1} |P(\beta ^{(i)})|^g \\
& \leq & |B_1|^m |P(\beta )|^g H(P)^{(s_1-1)g}\prod_{i=2}^{s_1} \max (1, |\beta ^{(i)}|)^{mg} \\
& = & |P(\beta )|^g H(P)^{(s_1-1)g} \left( \frac{H(Q_1)}{\max (1, |\beta |)^g} \right) ^m \\
& \leq & |P(\beta )|^g H(P)^{(n/u-1)g} H(Q)^{mg/u} \max (1, |\beta |)^{-mg}.
\end{eqnarray*}
As a result, we have (\ref{Liouville0}).
From Lemma \ref{height}, it follows that
\begin{eqnarray*}
|P(\beta )| & \leq & |\beta -\alpha |^t |A| \max (1, |\beta |)^{m-t}\prod_{i=2}^{r}\max (1, |\alpha _i|)^{t_i} \\
& = & |\beta -\alpha |^t H(P) \max (1, |\alpha |)^{-t} \max (1, |\beta |)^{m-t}.
\end{eqnarray*}
Hence, we have (\ref{Liouville1}) by (\ref{Liouville0}).
\end{proof}

The lemma below is an analogue of Theorem A.3 in \cite{Bugeaud1} and Lemma 2.3 in \cite{Pejkovic}.

\begin{lem}\label{Liouville inequ2}
Let $P(X) \in (\lF_q[T])[X]$ be an irreducible polynomial of degree $n \geq 2$.
For any distinct roots $\alpha ,\beta $ of $P(X)$, we have
\begin{eqnarray}\label{Liouville2}
|\alpha -\beta | \geq H(P)^{-n/f^2 +1/f},
\end{eqnarray}
where $f$ is inseparable degree of $P(X)$.
\end{lem}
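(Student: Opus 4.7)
The plan is to reduce to the separable case and then exploit the discriminant. Since $P(X)$ is irreducible and has inseparable degree $f$, one can write $P(X) = Q(X^f)$ for a unique separable irreducible polynomial $Q \in (\mathbb{F}_q[T])[Y]$ of degree $d := n/f$. The nonzero coefficients of $P$ and $Q$ are in bijection, so $H(P) = H(Q)$. If $\alpha,\beta$ are distinct roots of $P$, setting $\gamma := \alpha^f$ and $\delta := \beta^f$ gives distinct roots of $Q$ (the $f$-th power map is injective on $\overline{\mathbb{F}_q(T)}$), and the characteristic-$p$ identity $(\alpha-\beta)^f = \alpha^f - \beta^f$ together with multiplicativity of $|\cdot|$ yields $|\alpha - \beta|^f = |\gamma - \delta|$.

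It then suffices to bound $|\gamma - \delta|$ from below, which I would do by a discriminant estimate. Write $Q(Y) = A\prod_{i=1}^{d}(Y-\gamma_i)$ with $\gamma_1 = \gamma$ and $\gamma_2 = \delta$. Because $Q$ is separable and irreducible, $\Disc(Q) = A^{2d-2}\prod_{i<j}(\gamma_i - \gamma_j)^2$ is a nonzero element of $\mathbb{F}_q[T]$, hence $|\Disc(Q)| \geq 1$. The ultrametric inequality gives $|\gamma_i - \gamma_j| \leq \max(1,|\gamma_i|)\max(1,|\gamma_j|)$. Counting how often each $\gamma_k$ appears among the pairs $(i,j)$ with $i<j$ and $(i,j)\neq(1,2)$ (namely $d-1$ times for $k\geq 3$ and $d-2$ times for $k=1,2$), and using Lemma \ref{height} in the form $|A|\prod_i \max(1,|\gamma_i|) = H(Q)$, I would obtain
\begin{equation*}
\prod_{\substack{i<j \\ (i,j)\neq (1,2)}} |\gamma_i - \gamma_j|^2 \leq \frac{(H(Q)/|A|)^{2(d-1)}}{\max(1,|\gamma|)^2\max(1,|\delta|)^2}.
\end{equation*}
Inserting this into $1 \leq |\Disc(Q)|$ and cancelling $|A|^{2d-2}$ against $|A|^{-2(d-1)}$ yields the clean lower bound $|\gamma - \delta| \geq \max(1,|\gamma|)\max(1,|\delta|)\,H(Q)^{-(d-1)}$.

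Finally, taking the $f$-th root and using $\max(1,|\gamma|)^{1/f} = \max(1,|\alpha|)$ (which follows from $|\gamma| = |\alpha|^f$), the corresponding equality for $\beta$, and $H(P) = H(Q)$, the estimate becomes
\begin{equation*}
|\alpha - \beta| \geq \max(1,|\alpha|)\max(1,|\beta|)\,H(P)^{-(d-1)/f} \geq H(P)^{-n/f^2 + 1/f},
\end{equation*}
which is \eqref{Liouville2}. The only delicate step is the combinatorial bookkeeping in the discriminant bound: one must keep the exponent of $H(Q)$ exactly equal to $d-1$ (rather than the crude $d$) in order to end up with the factor $1/f$ in the exponent of the final inequality. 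Everything else is a routine application of the ultrametric inequality together with the multiplicativity of $H$ provided by Lemma \ref{height}.
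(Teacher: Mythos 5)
Your proof is correct and follows essentially the same path as the paper: reduce to the separable polynomial $Q$ with $P(X)=Q(X^f)$, use $|\mathrm{Disc}(Q)|\geq 1$ since $\mathrm{Disc}(Q)$ is a nonzero element of $\mathbb{F}_q[T]$, bound the remaining factors by $H(Q)$ via the ultrametric inequality, and translate back via $|\alpha-\beta|^f=|\alpha^f-\beta^f|$ and $H(P)=H(Q)$. The only (harmless) difference is that you carry the factor $\max(1,|\alpha|)\max(1,|\beta|)$ through to the end, giving a marginally sharper intermediate bound, whereas the paper discards it earlier; the final stated inequality is the same.
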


\begin{proof}
We can write $P(X)=A\prod_{i=1}^{m} (X-\alpha _i)^f ,$ where $\alpha _1=\alpha , \alpha _2=\beta $ and $\alpha $'s are pairwise distinct.
Put $Q(X):=A\prod_{i=1}^{m} (X-\alpha _i ^f)$.
Since $Q(X)$ is separable, the discriminant $\Disc (Q)$ is non-zero and is in $\lF _q[T]$.
Therefore, we obtain
\begin{eqnarray*}
1 & \leq & |\Disc (Q)|
\leq |\alpha ^f-\beta ^f|^2 |A|^{2m-2} \prod_{\stackrel{1\leq i<j\leq m}{(i,j)\neq (1,2)}}^{} \max (1, |\alpha _i ^f|)^2\max (1, |\alpha _j ^f|)^2 \\
& \leq & |\alpha -\beta |^{2f} H(Q)^{2m-2} .
\end{eqnarray*}
Hence, we have (\ref{Liouville2}) by $H(P)=H(Q)$ and $n=mf$.
\end{proof}

The following proposition is an analogue of Corollary A.2 in \cite{Bugeaud1} and Lemma 2.5 in \cite{Pejkovic}, and is an extension of Theorem 1 in \cite{Mahler3}.

\begin{prop}\label{Liouville}
Let $\alpha ,\beta \in \overline{\lF _q(T)}$ be distinct algebraic numbers of degree $m, n$ and inseparable degree $f,g$, respectively.
Then we have
\begin{eqnarray}
|\alpha -\beta |\geq \max (1,|\alpha |) \max (1,|\beta |) H(\alpha )^{-n/f g} H(\beta )^{-m/f g}.
\end{eqnarray}
\end{prop}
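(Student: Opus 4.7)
The plan is to split into two cases according to whether $\alpha$ and $\beta$ share a minimal polynomial. Let $P\in(\lF_q[T])[X]_{\min}$ (resp.\ $Q$) be the minimal polynomial of $\alpha$ (resp.\ $\beta$), so that $P$ has degree $m$ and $\alpha$ is a root of $P$ of multiplicity $f$, while $Q$ has degree $n$ and $\beta$ is a root of $Q$ of multiplicity $g$. Also $H(P)=H(\alpha)$ and $H(Q)=H(\beta)$.

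\emph{Case 1: $P\neq Q$.} Since $P$ and $Q$ are distinct elements of $(\lF_q[T])[X]_{\min}$ and both are irreducible, they are coprime, and in particular $P(\beta)\neq 0$. I would then invoke the bound \eqref{Liouville1} of Proposition \ref{Liouville inequ1} with $t=f$ and $u=g$. The two polynomials have degrees $m$ and $n$, so the inequality becomes
\begin{equation*}
|\alpha-\beta|\geq \max(1,|\alpha|)\max(1,|\beta|)H(P)^{-n/fg}H(Q)^{-m/fg},
\end{equation*}
which is exactly the desired bound upon rewriting $H(P)=H(\alpha)$, $H(Q)=H(\beta)$.

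\emph{Case 2: $P=Q$.} Then $m=n$, $f=g$, $H(\alpha)=H(\beta)$, and $\alpha,\beta$ are distinct conjugate roots of $P$; in particular $n\geq 2$, so Lemma \ref{Liouville inequ2} applies and gives $|\alpha-\beta|\geq H(P)^{-n/f^2+1/f}$. It therefore suffices to prove
\begin{equation*}
H(P)^{-n/f^2+1/f}\geq \max(1,|\alpha|)\max(1,|\beta|)\,H(P)^{-2n/f^2},
\end{equation*}
equivalently $H(P)^{n/f^2+1/f}\geq \max(1,|\alpha|)\max(1,|\beta|)$. Writing $P(X)=A\prod_{i}(X-\alpha_i)^f$ with $\alpha,\beta$ among the $\alpha_i$, Lemma \ref{height} gives $H(P)\geq \max(1,|\alpha|)^f\max(1,|\beta|)^f$, so $H(P)^{1/f}\geq \max(1,|\alpha|)\max(1,|\beta|)$, and the desired inequality follows from $H(P)\geq 1$ together with $n/f^2+1/f\geq 1/f$.

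The only subtle point is verifying that the two preceding lemmas have been applied in the correct regime: in Case 1 one needs $P(\beta)\neq 0$, which is automatic because distinct minimal polynomials are coprime; in Case 2 one needs $n\geq 2$ for Lemma \ref{Liouville inequ2}, which is automatic because $\alpha\neq\beta$ forces the common minimal polynomial to have more than one root. Beyond these checks, the proof is bookkeeping: assemble the exponents from \eqref{Liouville1} and \eqref{Liouville2} and compare them with the target expression $H(\alpha)^{-n/fg}H(\beta)^{-m/fg}$.
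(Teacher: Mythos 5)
Your proof is correct and follows the same route as the paper, which simply asserts that the proposition "immediately" follows from (\ref{Liouville1}) and (\ref{Liouville2}). The case split on $P=Q$ versus $P\neq Q$ is the natural way to make that assertion precise, and your verification that Lemma \ref{Liouville inequ2} absorbs the extra factors $\max(1,|\alpha|)\max(1,|\beta|)\,H(P)^{-n/f^2}$ in the conjugate case (via $H(P)\geq\max(1,|\alpha|)^f\max(1,|\beta|)^f$ from Lemma \ref{height} and $H(P)\geq1$) is exactly the bookkeeping the paper leaves implicit.
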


\begin{proof}
From (\ref{Liouville1}) and (\ref{Liouville2}), the above inequality immediately holds.
\end{proof}

Let $\alpha \in \overline{\lF _q(T)}$ be a quadratic number.
Then we denote by $\alpha '$ the Galois conjugate of $\alpha $ which is different from $\alpha $ if $\insep \alpha =1$, and itself if $\insep \alpha =2$.
The lemma below is an analogue of Lemma 3.2 in \cite{Pejkovic}.

\begin{lem}\label{Galois conj}
Let $\alpha \in \overline{\lF _q(T)}$ be a quadratic number.
If $\alpha \neq \alpha '$, then we have
\begin{eqnarray}
H(\alpha )^{-1}\leq |\alpha -\alpha '|\leq H(\alpha ).
\end{eqnarray}
\end{lem}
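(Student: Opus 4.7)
The plan is to derive both bounds directly from results already established in this section. Let $P(X)=aX^2+bX+c\in (\lF_q[T])[X]_{\min}$ be the minimal polynomial of $\alpha$, so that $P(X)=a(X-\alpha)(X-\alpha')$ and $H(\alpha)=H(P)$.

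For the upper bound, I would apply Lemma \ref{height}, which gives $H(\alpha)=|a|\max(1,|\alpha|)\max(1,|\alpha'|)$. Because the absolute value on $\overline{\lF_q((T^{-1}))}$ is non-archimedean, the ultrametric inequality yields $|\alpha-\alpha'|\leq\max(|\alpha|,|\alpha'|)$, and elementarily $\max(|\alpha|,|\alpha'|)\leq\max(1,|\alpha|)\max(1,|\alpha'|)=H(\alpha)/|a|$. Since the leading coefficient $a$ is monic in $T$ by the definition of $(\lF_q[T])[X]_{\min}$, we have $|a|\geq 1$, and therefore $|\alpha-\alpha'|\leq H(\alpha)$.

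For the lower bound, the hypothesis $\alpha\neq\alpha'$ forces $\insep\alpha=1$ (otherwise, by the convention fixed just above the statement, $\alpha'$ would equal $\alpha$), so $P$ is separable. Applying Lemma \ref{Liouville inequ2} with $n=\deg P=2$ and $f=\insep P=1$ gives $|\alpha-\alpha'|\geq H(P)^{-2/1+1/1}=H(\alpha)^{-1}$, as required.

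There is no real obstacle here: both inequalities are essentially specializations of already-proved tools. The only points requiring attention are the observation that $\alpha\neq\alpha'$ automatically provides the separability needed to invoke Lemma \ref{Liouville inequ2} with the clean exponent $-1$, and the normalization that the leading coefficient of a minimal polynomial is monic in $T$, which supplies the bound $|a|\geq 1$ used in the upper-bound step.
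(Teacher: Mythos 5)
Your proof is correct, but it takes a different route from the paper's. The paper argues directly from the quadratic formula: writing $P_\alpha(X)=AX^2+BX+C$, it uses the identity $|\alpha-\alpha'|=|B^2-4AC|^{1/2}/|A|$, bounds the numerator above by $\max(|B|,|AC|^{1/2})\leq H(\alpha)$ via the ultrametric inequality, and bounds it below by $1$ since $B^2-4AC$ is a nonzero element of $\lF_q[T]$. You instead specialize two already-proved general tools: Lemma \ref{height} for the upper bound and Lemma \ref{Liouville inequ2} (with $n=2$, $f=1$) for the lower bound. Your observation that $\alpha\neq\alpha'$ forces $\insep\alpha=1$ is exactly right and is what licenses the clean exponent $-1$ in Lemma \ref{Liouville inequ2}. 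Both approaches are valid and rest ultimately on the same principle (a nonzero resultant or discriminant in $\lF_q[T]$ has absolute value $\geq 1$); the paper's version is more self-contained and makes the dependence on the coefficients $A,B,C$ visible, which is convenient because the intermediate identity $|\alpha-\alpha'|=|B^2-4AC|^{1/2}/|A|$ is reused implicitly later. One minor point: you justify $|a|\geq 1$ by the monicity of the leading coefficient, but in fact any nonzero element of $\lF_q[T]$ has absolute value at least $1$, so monicity is not needed for that step.
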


\begin{proof}
Let $P_{\alpha }(X)= AX^2+BX+C$ be the minimal polynomial of $\alpha $.
Then we have
\begin{gather*}
|\alpha -\alpha '| = \frac{|B^2-4AC|^{1/2}}{|A|} \leq \max (|B|, |AC|^{1/2}) \leq H(\alpha )\\
\intertext{and}
|\alpha -\alpha '|\geq \frac{1}{|A|} \geq H(\alpha )^{-1}.
\end{gather*}
\end{proof}

We give a better estimate than Proposition \ref{Liouville} in some cases, which is an analogue of Lemma 7.1 in \cite{Bugeaud2} and Lemma 4 in \cite{Bugeaud3}.

\begin{prop}\label{Liouvilleinequ3}
Let $\alpha ,\beta \in \overline{\lF _q(T)}$ be quadratic numbers.
We denote by $P_{\alpha }(X)=A(X-\alpha )(X-\alpha '), P_{\beta }(X)=B(X-\beta )(X-\beta ')$ the minimal polynomials of $\alpha ,\beta $, respectively.
If $\alpha \neq \alpha '$ and $P_{\alpha }(X)\neq P_{\beta }(X)$, then we have
\begin{eqnarray}\label{Liouville3}
|\alpha -\beta |\geq \max (1, |\alpha -\alpha '|^{-1}) H(\alpha )^{-2} H(\beta )^{-2}.
\end{eqnarray}
\end{prop}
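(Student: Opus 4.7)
Set $s := |\alpha - \alpha'|$, so the target is
\[
|\alpha - \beta| \geq \max(1, s^{-1}) H(\alpha)^{-2} H(\beta)^{-2}.
\]
The plan is to dispose of the case $s \geq 1$ via Proposition \ref{Liouville}, and to handle the delicate case $s < 1$ through the factorisation $P_{\alpha}(\beta) = A(\beta - \alpha)(\beta - \alpha')$ combined with the lower bound (\ref{Liouville0}) applied to $P_{\alpha}$ at $\beta$.

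If $s \geq 1$, then since $\alpha \neq \alpha'$ forces $\insep \alpha = 1$, Proposition \ref{Liouville} gives
\[
|\alpha - \beta| \geq \max(1, |\alpha|) \max(1, |\beta|) H(\alpha)^{-2/g} H(\beta)^{-2/g},
\]
with $g = \insep \beta \in \{1, 2\}$. Since $H(\alpha)$, $H(\beta)$, $\max(1, |\alpha|)$, $\max(1, |\beta|)$ are all $\geq 1$, this right-hand side is at least $H(\alpha)^{-2} H(\beta)^{-2}$, which is precisely what is needed when $\max(1, s^{-1}) = 1$.

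Assume now $s < 1$. Since $P_{\alpha}$ and $P_{\beta}$ are distinct irreducible polynomials they share no root, so $P_{\alpha}(\beta) \neq 0$, and applying (\ref{Liouville0}) to $P = P_{\alpha}$, $Q = P_{\beta}$ (with $m = n = 2$ and $u = \insep \beta$) produces in either case
\[
|P_{\alpha}(\beta)| \geq \max(1, |\beta|)^2 H(\alpha)^{-1} H(\beta)^{-2}
\]
(the $u = 2$ instance is actually stronger, since $H(\alpha) H(\beta) \geq 1$). Combining with $|P_{\alpha}(\beta)| = |A| \cdot |\alpha - \beta| \cdot |\alpha' - \beta|$ and $|A| \leq H(\alpha)$ yields
\[
|\alpha - \beta| \cdot |\alpha' - \beta| \geq H(\alpha)^{-2} H(\beta)^{-2}.
\]
I would then split on the size of $|\alpha' - \beta|$. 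If $|\alpha' - \beta| \leq |\alpha - \beta|$, squaring yields $|\alpha - \beta| \geq (H(\alpha) H(\beta))^{-1}$, and since Lemma \ref{Galois conj} gives $s \geq H(\alpha)^{-1}$ while $H(\beta) \geq 1$, this upgrades to $|\alpha - \beta| \geq s^{-1} H(\alpha)^{-2} H(\beta)^{-2}$. Otherwise $|\alpha' - \beta| > |\alpha - \beta|$, and the strict ultrametric principle applied to the identity $\alpha' - \beta = (\alpha' - \alpha) + (\alpha - \beta)$ forces $|\alpha' - \beta| = s$; substituting gives the desired bound at once.

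The genuine obstacle is the bookkeeping required to check that (\ref{Liouville0}) produces a uniform lower bound on $|P_{\alpha}(\beta)|$ irrespective of whether $\beta$ is separable; this amounts to comparing the $u = 1$ and $u = 2$ exponents of $H(\alpha)$ and $H(\beta)$ and noting that the inseparable case is no weaker. Everything else reduces to the brief ultrametric dichotomy above.
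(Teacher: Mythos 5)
Your proof is correct and follows essentially the same route as the paper: reduce to $|\alpha-\alpha'|<1$ via Proposition \ref{Liouville}, establish $|\alpha-\beta||\alpha'-\beta|\geq H(\alpha)^{-2}H(\beta)^{-2}$, and conclude by the ultrametric dichotomy on $|\alpha'-\beta|$ together with Lemma \ref{Galois conj}. The only cosmetic difference is that you derive the key product inequality by invoking the packaged bound (\ref{Liouville0}) of Proposition \ref{Liouville inequ1}, whereas the paper unfolds the resultant $\Res(P_\alpha,P_\beta)$ directly; these amount to the same computation.
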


\begin{proof}
By Proposition \ref{Liouville}, we may assume that $|\alpha -\alpha '|<1$.
Since $P_{\alpha }(X)$ and $P_{\beta }(X)$ does not have common roots, we have
\begin{eqnarray*}
1 & \leq & |\Res (P_{\alpha },P_{\beta })| = |B|^2 |P_{\alpha }(\beta )||P_{\alpha }(\beta ')| \\
& \leq & |AB^2||\alpha -\beta ||\alpha '-\beta |H(\alpha ) \max (1,|\beta '|)^2 \\
& \leq & |\alpha -\beta ||\alpha '-\beta |H(\alpha )^2 H(\beta )^2.
\end{eqnarray*}
In the case of $|\alpha '-\beta |>|\alpha -\beta |$, we have $|\alpha -\alpha '|=|\alpha '-\beta |$.
Hence, we get (\ref{Liouville3}).
In other case, using Lemma \ref{Galois conj}, we obtain
\begin{eqnarray*}
|\alpha -\beta |^2 & \geq & |\alpha -\beta ||\alpha '-\beta | \geq H(\alpha )^{-2}H(\beta )^{-2}
\geq |\alpha -\alpha '|^{-2} H(\alpha )^{-4}H(\beta )^{-4}.
\end{eqnarray*}
Therefore, we have (\ref{Liouville3}).
\end{proof}

\section{Continued fractions}\label{Continued sec}

We collect fundamental properties of continued fractions for Laurent series over a finite field.
The lemma below is immediate by induction on $n$.

\begin{lem}\label{fund}
Consider a continued fraction $\xi =[a_0,a_1,a_2, \ldots ]\in \lF_q((T^{-1}))$.
Let \\ $(p_n/q_n)_{n\geq 0}$ be the convergent sequence of $\xi $.
Then the following hold: for any $n\geq 0$,
\begin{enumerate}
\item[(i)] $q_n p_{n-1} -p_n q_{n-1} =(-1)^n$,
\item[(ii)] $(p_n,q_n)=1$,
\item[(iii)] $|q_n|=|a_1||a_2|\cdots |a_n|$,
\item[(iv)] $\xi = \frac{\xi _{n+1}p_n+p_{n-1}}{\xi _{n+1}q_n +q_{n-1}}$, where $\xi =[a_0,\ldots ,a_n,\xi _{n+1}]$,
\item[(v)] $\left| \xi -p_n/q_n \right| =|q_n|^{-1}|q_{n+1}|^{-1}=|a_{n+1}|^{-1}|q_n|^{-2}$,
\item[(vi)] $q_n/q_{n-1}=[a_n,a_{n-1},\ldots a_1]$.
\end{enumerate}
\end{lem}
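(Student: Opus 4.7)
The proof will proceed by induction on $n$, exactly as the author indicates, with the only genuinely function-field-specific ingredient being the ultrametric inequality $|x+y| \leq \max(|x|,|y|)$ with equality whenever $|x| \neq |y|$. I will handle the six items in an order that lets later parts use earlier ones.

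The plan is as follows. For (i), the base case $n=0$ gives $q_0 p_{-1} - p_0 q_{-1} = 1$, and the inductive step follows by substituting the recurrences:
\begin{align*}
q_n p_{n-1} - p_n q_{n-1} &= (a_n q_{n-1}+q_{n-2})p_{n-1} - (a_n p_{n-1}+p_{n-2})q_{n-1} \\
&= -(q_{n-1}p_{n-2} - p_{n-1}q_{n-2}) = -(-1)^{n-1} = (-1)^n.
\end{align*}
Item (ii) is then immediate, since any common divisor of $p_n$ and $q_n$ must divide $(-1)^n$. For (iii), I will induct using the recurrence $q_n = a_n q_{n-1} + q_{n-2}$, observing that $|a_n q_{n-1}| = |a_n|\cdot|q_{n-1}| \geq q\,|q_{n-1}|$ because $\deg a_n \geq 1$ for $n\geq 1$; since inductively $|q_{n-1}| \geq |q_{n-2}|$, the two summands have distinct absolute values and the ultrametric equality gives $|q_n| = |a_n|\,|q_{n-1}|$.

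For (iv), define $\xi_{n+1} := [a_{n+1},a_{n+2},\ldots]$ so that $\xi = [a_0,\ldots,a_n,\xi_{n+1}]$. By induction, treating $\xi_{n+1}$ as a formal last partial quotient, one gets $\xi = (\xi_{n+1}p_n + p_{n-1})/(\xi_{n+1}q_n + q_{n-1})$; the inductive step uses $\xi_n = a_n + 1/\xi_{n+1}$ together with the recurrences for $p_n, q_n$. For (v), combining (i) and (iv) yields
\begin{eqnarray*}
\xi - \frac{p_n}{q_n} = \frac{(-1)^n}{q_n(\xi_{n+1}q_n + q_{n-1})}.
\end{eqnarray*}
Here $|\xi_{n+1}| = |a_{n+1}|$ by the ultrametric inequality applied to $\xi_{n+1} = a_{n+1} + 1/\xi_{n+2}$ (the second term has absolute value $<1$), and then $|\xi_{n+1}q_n| = |a_{n+1}|\,|q_n| = |q_{n+1}| > |q_{n-1}|$, so the denominator has absolute value $|q_n|\,|q_{n+1}|$, giving both equalities in (v).

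Finally, (vi) follows from the recurrence $q_n/q_{n-1} = a_n + q_{n-2}/q_{n-1} = a_n + 1/(q_{n-1}/q_{n-2})$ and the inductive hypothesis $q_{n-1}/q_{n-2} = [a_{n-1},\ldots,a_1]$, after verifying the base case $q_1/q_0 = a_1 = [a_1]$. There is no real obstacle anywhere in this proof: the only points needing care are applying the strict ultrametric equality (which requires checking that the two terms have different absolute values, relying on $\deg a_n \geq 1$) and being consistent about the extension of $p_n, q_n$ formulas to a non-polynomial ``last partial quotient'' $\xi_{n+1}$ in (iv).
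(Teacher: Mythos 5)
Your proof is correct and follows exactly the inductive route the paper alludes to (the paper merely remarks that Lemma~\ref{fund} is ``immediate by induction on $n$'' and gives no further details). Your careful handling of the ultrametric equality in (iii) and (v) — checking that the two summands have distinct absolute values via $\deg a_n \geq 1$ — is precisely the only step requiring any care, and you got it right.
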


We recall an analogue of Lagrange's theorem for Laurent series over a finite field.

\begin{thm}\label{Lagrange}
Let $\xi $ be in $\lF _q((T^{-1}))$.
Then $\xi $ is quadratic if and only if its continued fraction expansion is ultimately periodic.
\end{thm}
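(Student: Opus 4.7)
\medskip

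\noindent\textbf{Proof plan.} The plan is to split the equivalence into the two standard directions and, in each, mimic the classical Lagrange argument, using Lemma \ref{fund} (in particular (iv), (v), (vi)) to translate continued fraction identities into the non-archimedean setting.

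For the easy direction, I would assume $\xi$ is ultimately periodic, say $\xi=[a_0,\ldots,a_{k-1},\overline{b_1,\ldots,b_m}]$. Setting $\eta=[\overline{b_1,\ldots,b_m}]$, the recursion gives $\eta=[b_1,\ldots,b_m,\eta]$, and Lemma \ref{fund}(iv) applied to the convergents $P_i/Q_i$ of $[b_1,\ldots,b_m]$ yields $\eta=(P_m\eta+P_{m-1})/(Q_m\eta+Q_{m-1})$, i.e.\ $Q_m\eta^2+(Q_{m-1}-P_m)\eta-P_{m-1}=0$. Since the continued fraction of $\eta$ is infinite, $\eta\notin\mathbb{F}_q(T)$, so $\eta$ is quadratic. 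Applying Lemma \ref{fund}(iv) once more, $\xi=(p_{k-1}\eta+p_{k-2})/(q_{k-1}\eta+q_{k-2})$ lies in $\mathbb{F}_q(T)(\eta)$; being non-rational it must be quadratic.

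For the harder converse, assume $\xi$ satisfies $a\xi^2+b\xi+c=0$ with $a,b,c\in\mathbb{F}_q[T]$, $a\neq 0$. Let $\xi_n=[a_n,a_{n+1},\ldots]$ denote the complete quotients. Substituting $\xi=(\xi_n p_{n-1}+p_{n-2})/(\xi_n q_{n-1}+q_{n-2})$ into the quadratic and clearing denominators gives
\begin{align*}
A_n\xi_n^2+B_n\xi_n+C_n=0,
\end{align*}
where $A_n=ap_{n-1}^2+bp_{n-1}q_{n-1}+cq_{n-1}^2$, $C_n=A_{n-1}$, and $B_n$ is the mixed term. A direct computation using Lemma \ref{fund}(i) yields the invariant $B_n^2-4A_nC_n=b^2-4ac$. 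By Lemma \ref{fund}(v), one may write $p_{n-1}=\xi q_{n-1}+\varepsilon_{n-1}$ with $|\varepsilon_{n-1}|=|q_n|^{-1}\leq 1$. Substituting and using $a\xi^2+b\xi+c=0$ gives
\begin{align*}
A_n=(2a\xi+b)q_{n-1}\varepsilon_{n-1}+a\varepsilon_{n-1}^2,
\end{align*}
so $|A_n|\leq\max(|2a\xi+b|,|a|)$, uniformly in $n$; the same bound holds for $|C_n|=|A_{n-1}|$. The discriminant identity then controls $|B_n|$ (the ultrametric inequality gives $|B_n|^2\leq\max(|4A_nC_n|,|b^2-4ac|)$ when $p\neq 2$, and in characteristic two the identity degenerates to $B_n^2=b^2$, so $B_n=b$ is constant).

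The point is that the triples $(A_n,B_n,C_n)\in\mathbb{F}_q[T]^3$ are confined to a fixed ball, hence to a \emph{finite} set. By the pigeonhole principle there exist indices $n_1<n_2<n_3$ with $(A_{n_i},B_{n_i},C_{n_i})$ all equal; since the quadratic $A_{n_1}X^2+B_{n_1}X+C_{n_1}$ has at most two roots in $\mathbb{F}_q((T^{-1}))$, two of the $\xi_{n_i}$ coincide. Uniqueness of the continued fraction expansion then forces the tail of the partial quotients to be periodic, proving ultimate periodicity. The step I expect to require the most care is the finiteness argument: verifying that the uniform bound on $|A_n|,|C_n|$ via the ultrametric cancellation indeed works for all $n$, and handling the characteristic two case for $B_n$ so that the triple still ranges over a finite set.
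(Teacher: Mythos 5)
The paper does not prove Theorem \ref{Lagrange} at all; it simply refers the reader to \cite[Theorem 3 and 4]{Chaichana}. Your self-contained argument is the classical Lagrange proof transposed to $\lF_q((T^{-1}))$, and the overall structure (easy direction via the fixed-point equation for the periodic tail; hard direction via bounded coefficient triples $(A_n,B_n,C_n)$ and pigeonhole) is sound, including the care you take with characteristic two where $4=0$ degenerates the discriminant identity to $B_n^2=b^2$, forcing $B_n=b$ since squaring is injective.

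There is one small omission you should repair before the pigeonhole step: you must check that $A_n\neq 0$ for all $n$, both so that the triples avoid $(0,0,0)$ and so that $A_{n_1}X^2+B_{n_1}X+C_{n_1}$ is genuinely quadratic and hence has at most two roots. This is immediate once you observe $A_n=q_{n-1}^2\,(a(p_{n-1}/q_{n-1})^2+b(p_{n-1}/q_{n-1})+c)$: since $\xi\notin\lF_q(T)$, any polynomial $aX^2+bX+c\in(\lF_q[T])[X]$ of degree exactly two vanishing at $\xi$ can have no root in $\lF_q(T)$ (if one root were rational, so would the other be, forcing $\xi$ rational), so $A_n\neq 0$. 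With that line added the proof is complete. You might also add one sentence making explicit that $\xi_{n_1}=\xi_{n_2}$ forces $a_{n_1+k}=a_{n_2+k}$ for all $k\geq 0$ by uniqueness of the continued fraction expansion (valid here since each complete quotient has $|\xi_n|=|a_n|>1$), which is what gives ultimate periodicity.
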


\begin{proof}
See e.g. \cite[Theorem 3 and 4]{Chaichana}.
\end{proof}

The lemma below is immediate by Lemma \ref{fund}.

\begin{lem}\label{height upper}
Consider an ultimately periodic continued fraction
\begin{eqnarray*}
\xi =[0,a_1,\ldots ,a_r,\overline{a_{r+1},\ldots ,a_{r+s}}]\in \lF _q((T^{-1}))
\end{eqnarray*}
for $r\geq 0, s\geq 1.$
Let $(p_n/q_n)_{n\geq 0}$ be the convergent sequence of $\xi $.
Then $\xi $ is a root of the following equation:
\begin{eqnarray*}
(q_{r-1}q_{r+s}-q_r q_{r+s-1})X^2-(q_{r-1}p_{r+s}-q_r p_{r+s-1}+p_{r-1}q_{r+s}-p_r q_{r+s-1})X \\
+p_{r-1}p_{r+s}-p_r p_{r+s-1}=0,
\end{eqnarray*}
and we have $\ H(\xi )\leq |q_r q_{r+s}|$.
In particular, if $\xi =[0,\overline{a_1,\ldots ,a_s}]$, then $\xi $ is a root of the following equation:
\begin{eqnarray*}
q_{s-1}X^2-(p_{s-1}-q_s)X-p_s=0,
\end{eqnarray*}
and we have $H(\xi )\leq |q_s|$.
\end{lem}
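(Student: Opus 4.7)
The plan is to combine Lemma \ref{fund}(iv) with the periodicity hypothesis. Writing $\xi_{r+1} := [a_{r+1}, a_{r+2}, \ldots]$ and $\xi_{r+s+1} := [a_{r+s+1}, a_{r+s+2}, \ldots]$, the periodicity of the tail forces $\xi_{r+1} = \xi_{r+s+1}$. Two applications of Lemma \ref{fund}(iv), at indices $r$ and $r+s$, give
\[
\xi = \frac{\xi_{r+1} p_r + p_{r-1}}{\xi_{r+1} q_r + q_{r-1}} = \frac{\xi_{r+1} p_{r+s} + p_{r+s-1}}{\xi_{r+1} q_{r+s} + q_{r+s-1}}.
\]
Solving the first identity for $\xi_{r+1}$ as $\xi_{r+1} = (p_{r-1} - \xi q_{r-1})/(\xi q_r - p_r)$ and substituting into the second (equivalently, cross-multiplying the two equal fractions), then clearing denominators and collecting powers of $\xi$, produces a quadratic polynomial identity in $\xi$ whose three coefficients are intended to match the stated form term by term. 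The purely periodic case is the specialization $r = 0$ with the conventions $p_{-1} = 1$, $q_{-1} = 0$, $p_0 = 0$, $q_0 = 1$.

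For the height bound, denote the resulting polynomial by $P(X)$. Lemma \ref{fund}(iii) combined with $|a_n| \geq q$ for $n \geq 1$ (from $\deg a_n \geq 1$) makes $(|q_n|)_{n \geq 0}$ strictly increasing, and a short induction on the recursion $p_n = a_n p_{n-1} + p_{n-2}$ starting from $p_0 = 0$, $p_1 = 1$ yields $|p_n| < |q_n|$ for all $n \geq 0$. Every coefficient of $P$ is an alternating sum of products $xy$ with $|x| \leq |q_r|$ and $|y| \leq |q_{r+s}|$, so the ultrametric inequality gives $H(P) \leq |q_r q_{r+s}|$. Since the minimal polynomial $P_\xi$ is primitive by the definition of $(\lF_q[T])[X]_{\min}$, Gauss's lemma provides a factorization $P = P_\xi R$ with $R \in (\lF_q[T])[X]$; Lemma \ref{height} then gives $H(P) = H(P_\xi) H(R) \geq H(P_\xi) = H(\xi)$, because any nonzero polynomial over $\lF_q[T]$ has height at least $1$.

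The main obstacle is purely the routine but tedious symbolic bookkeeping in the first step: careful expansion of the cross-product and verification that the coefficient of $X$ assembles into the six-term form stated. A small side check is that $P$ is nonzero as a polynomial so that the height inequality is informative; this is visible in the purely periodic case, where the leading coefficient $-q_{s-1}$ does not vanish for $s \geq 1$, and the general case reduces to it via the M\"obius change of variable $\eta \mapsto (\eta p_r + p_{r-1})/(\eta q_r + q_{r-1})$ relating $\xi$ to the purely periodic $\eta := \xi_{r+1}$.
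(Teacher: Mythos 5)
Your plan is the right one and matches what the paper intends: the paper dismisses this lemma with ``immediate by Lemma~\ref{fund}'', and your reconstruction --- applying Lemma~\ref{fund}(iv) at $r$ and $r+s$, using periodicity to equate $\xi_{r+1}=\xi_{r+s+1}$, cross-multiplying, bounding coefficients via $|p_n|<|q_n|$ and the strict growth of $|q_n|$, and then invoking Gauss's lemma with the multiplicativity of $H$ --- is precisely the argument being alluded to. The coefficient bookkeeping does check out against the stated form, and the height step (Gauss plus $H(R)\ge 1$ for nonzero $R$) is clean.

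The one place you leave a genuine soft spot is the ``side check'' that $P\ne 0$. The M\"obius reduction as stated is not watertight: pulling a quadratic back through a fractional-linear change of variable with unit determinant need not preserve the degree if the image of $\infty$ happens to be a root, and you would also have to justify that the pulled-back polynomial is the same (up to a scalar) as the polynomial $P$ you built by cross-multiplication --- a priori you only know both vanish at $\xi$. A more direct and complete argument is available: for $r=0$ the leading coefficient is $-q_{s-1}\ne 0$, and for $r\ge 1$, vanishing of $q_{r-1}q_{r+s}-q_rq_{r+s-1}$ would force $q_r/q_{r-1}=q_{r+s}/q_{r+s-1}$, i.e.\ $[a_r,\ldots,a_1]=[a_{r+s},\ldots,a_1]$ by Lemma~\ref{fund}(vi), contradicting uniqueness of continued-fraction expansions since $s\ge 1$. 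With that substitution your proof is complete.

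Two very small notational cautions. Your induction giving $|p_n|<|q_n|$ starts at $n=0$, which is fine, but the coefficient bound also uses the index $r-1$; when $r=0$ this means $p_{-1}=1$, $q_{-1}=0$, and one has $|p_{-1}|=1=|q_0|$ (equality, not strict), which is still $\le |q_r|$ and hence enough, but the inequality you state for $n\ge 0$ does not literally extend to $n=-1$. Also, the ultrametric gives you $H(P)\le |q_rq_{r+s}|$ rather than a strict inequality, because e.g.\ in the purely periodic case $|p_{s-1}-q_s|=|q_s|$ exactly; this is of course all that is claimed.
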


\begin{lem}\label{lower bound}
Let $M\geq q$ be an integer and $\xi =[0, a_1,a_2,\ldots ], \zeta =[0,b_1,b_2,\ldots ] \in \lF _q((T^{-1}))$ be continued fractions with $|a_n|,|b_n|\leq M$ for all $n\geq 1$.
Assume that there exists an integer $n_0\geq 1$ such that $a_n=b_n$  for all $1\leq n\leq n_0$ and $a_{n_0+1}\neq  b_{n_0+1}$.
Then we have
\begin{eqnarray*}
|\xi -\zeta |\geq \frac{1}{M^2|q_{n_0}|^2}.
\end{eqnarray*}
\end{lem}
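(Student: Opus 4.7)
The plan is to apply Lemma~\ref{fund}(iv) to write both continued fractions using the common partial quotients $a_1,\dots,a_{n_0}$ (whose convergents are $p_k/q_k$) and the tail quotients $\xi_{n_0+1}=[a_{n_0+1},a_{n_0+2},\dots]$ and $\zeta_{n_0+1}=[b_{n_0+1},b_{n_0+2},\dots]$. Explicitly,
\begin{align*}
\xi=\frac{\xi_{n_0+1}p_{n_0}+p_{n_0-1}}{\xi_{n_0+1}q_{n_0}+q_{n_0-1}},\qquad
\zeta=\frac{\zeta_{n_0+1}p_{n_0}+p_{n_0-1}}{\zeta_{n_0+1}q_{n_0}+q_{n_0-1}}.
\end{align*}
Subtracting and simplifying, the cross terms cancel and the remaining factor $p_{n_0}q_{n_0-1}-p_{n_0-1}q_{n_0}$ has absolute value $1$ by Lemma~\ref{fund}(i), so
\begin{align*}
|\xi-\zeta|=\frac{|\xi_{n_0+1}-\zeta_{n_0+1}|}{|\xi_{n_0+1}q_{n_0}+q_{n_0-1}|\,|\zeta_{n_0+1}q_{n_0}+q_{n_0-1}|}.
\end{align*}

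The two denominators on the right are easy to control from above using the non-archimedean absolute value. Since $|a_{n_0+1}|\ge q>1$ while $|1/\xi_{n_0+2}|\le 1/q<1$, the ultrametric inequality gives $|\xi_{n_0+1}|=|a_{n_0+1}|\le M$; similarly $|\zeta_{n_0+1}|=|b_{n_0+1}|\le M$. Because $|q_{n_0}|\ge|q_{n_0-1}|$ (recall $|q_n|=|a_1|\cdots|a_n|$ and $|a_n|\ge q$), the term $\xi_{n_0+1}q_{n_0}$ strictly dominates $q_{n_0-1}$ in absolute value, so $|\xi_{n_0+1}q_{n_0}+q_{n_0-1}|=|\xi_{n_0+1}||q_{n_0}|\le M|q_{n_0}|$, and likewise for $\zeta$.

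It remains to bound $|\xi_{n_0+1}-\zeta_{n_0+1}|$ from below. Write $\xi_{n_0+1}-\zeta_{n_0+1}=(a_{n_0+1}-b_{n_0+1})+(1/\xi_{n_0+2}-1/\zeta_{n_0+2})$. The first summand is a nonzero element of $\mathbb{F}_q[T]$, hence has absolute value at least $1$, while the second summand is bounded in absolute value by $1/q<1$. The ultrametric inequality therefore yields $|\xi_{n_0+1}-\zeta_{n_0+1}|=|a_{n_0+1}-b_{n_0+1}|\ge 1$. Combining the three bounds produces $|\xi-\zeta|\ge 1/(M^2|q_{n_0}|^2)$, as desired.

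The only subtle step is the lower bound on $|\xi_{n_0+1}-\zeta_{n_0+1}|$: one has to notice that while the complete quotients themselves can interact in complicated ways, the difference of their integer parts already dominates the tail by the ultrametric inequality, which is a feature of the function field setting not available over $\mathbb{R}$. Everything else is a mechanical use of Lemma~\ref{fund}.
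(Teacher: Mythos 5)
Your proof is correct. In the paper this lemma is not proved directly; the author simply cites Lemma~3 of Adamczewski--Bugeaud (reference \cite{Adamczewski3}). Your self-contained argument is the natural one: it expresses both $\xi$ and $\zeta$ through their common convergent data and the complete quotients $\xi_{n_0+1}$, $\zeta_{n_0+1}$ via Lemma~\ref{fund}(iv), cancels the cross terms using $|p_{n_0}q_{n_0-1}-p_{n_0-1}q_{n_0}|=1$, and then exploits the ultrametric absolute value twice: once to get $|\xi_{n_0+1}q_{n_0}+q_{n_0-1}|=|\xi_{n_0+1}||q_{n_0}|\le M|q_{n_0}|$ (since $|\xi_{n_0+1}q_{n_0}|\ge q|q_{n_0}|>|q_{n_0-1}|$), and once to see that $|\xi_{n_0+1}-\zeta_{n_0+1}|=|a_{n_0+1}-b_{n_0+1}|\ge 1$ because the tail contribution $|1/\xi_{n_0+2}-1/\zeta_{n_0+2}|\le 1/q$ is strictly smaller than the polynomial difference. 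All steps check out, and this is essentially the proof one finds in the cited source.
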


\begin{proof}
See \cite[Lemma 3]{Adamczewski3}.
\end{proof}

\begin{lem}\label{conj}
For $n\geq 0$, consider an ultimately periodic continued fraction \\
$\xi =[\overline{a_0,a_1,\ldots ,a_n}] \in \lF _q((T^{-1}))$ with $\deg a_0\geq 1$.
Then we have
\begin{eqnarray*}
-\frac{1}{\xi '}=[\overline{a_n,a_{n-1},\ldots ,a_0}].
\end{eqnarray*}
\end{lem}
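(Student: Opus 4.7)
The plan is to show that $\zeta := [\overline{a_n, a_{n-1}, \ldots, a_0}]$ and $\eta := -1/\xi'$ are roots of the same irreducible quadratic over $\lF_q(T)$, and then to select the correct root by comparing absolute values.

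First I would let $p_k/q_k = [a_0, a_1, \ldots, a_k]$ denote the convergents of $\xi$. From the periodicity $\xi = [a_0, a_1, \ldots, a_n, \xi]$ and Lemma \ref{fund}(iv), $\xi$ satisfies
\begin{equation*}
q_n X^2 + (q_{n-1} - p_n) X - p_{n-1} = 0,
\end{equation*}
and $\xi'$ is the other root. Substituting $X = -1/Y$ and clearing denominators shows that $\eta$ is a root of
\begin{equation*}
p_{n-1} Y^2 + (q_{n-1} - p_n) Y - q_n = 0. \qquad (\ast)
\end{equation*}

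Second, I would identify the convergents of the reversed word $[a_n, \ldots, a_0]$ via the transposition identity
\begin{equation*}
\begin{pmatrix} a_n & 1 \\ 1 & 0 \end{pmatrix} \cdots \begin{pmatrix} a_0 & 1 \\ 1 & 0 \end{pmatrix} = \left( \begin{pmatrix} a_0 & 1 \\ 1 & 0 \end{pmatrix} \cdots \begin{pmatrix} a_n & 1 \\ 1 & 0 \end{pmatrix} \right)^{T} = \begin{pmatrix} p_n & q_n \\ p_{n-1} & q_{n-1} \end{pmatrix},
\end{equation*}
and then apply Lemma \ref{fund}(iv) to $\zeta = [a_n, \ldots, a_0, \zeta]$. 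This produces exactly $(\ast)$, so $\zeta$ and $\eta$ are both roots of the same quadratic, which is irreducible (the alternative would force $\xi \in \lF_q(T)$, contradicting Theorem \ref{Lagrange}).

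Finally, to rule out $\eta = \zeta'$, I would compare sizes. A short induction on the convergent recursion, using $\deg a_i \geq 1$ for every $0 \leq i \leq n$, gives $|p_k| = |a_0 a_1 \cdots a_k|$ and $|q_k| = |a_1 \cdots a_k|$. The product-of-roots relation $\xi \xi' = -p_{n-1}/q_n$ combined with $|\xi| = |a_0|$ yields $|\xi'| = 1/|a_n| < 1$, hence $|\eta| = |a_n| > 1$. The same calculation applied to $\zeta$ gives $|\zeta| = |a_n|$ and $|\zeta'| = 1/|a_0| < 1$. Since $|\eta| > 1 > |\zeta'|$, we must have $\eta = \zeta$.

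The main obstacle is the matrix transposition step: one has to verify that, in the reversed continued fraction, the numerators and denominators of the convergents land in exactly the slots needed to reproduce $(\ast)$ on the nose. Once that bookkeeping is confirmed, the absolute-value comparison selecting $\zeta$ over $\zeta'$ is essentially forced.
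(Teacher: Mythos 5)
Your proof is correct, and it is essentially the classical argument (going back to Galois for real quadratic surds) that Dubois's Lemma 2 --- the reference the paper cites instead of giving a proof --- also uses: express the reversed continued fraction via the transposed matrix product, observe that $\zeta$ and $-1/\xi'$ satisfy the same irreducible quadratic, and separate the two roots by absolute value. A few small points worth being explicit about if this were written out in full: (1) irreducibility of the quadratic follows because all partial quotients have degree at least one, so the continued fraction does not terminate and $\xi \notin \lF_q(T)$; (2) the inseparable case $\xi = \xi'$ (possible only in characteristic $2$ when $q_{n-1} = p_n$) is automatically excluded, since it would force $|\xi|^2 = |p_{n-1}/q_n| = |a_0|/|a_n|$ while the direct estimate gives $|\xi| = |a_0|$, whence $|a_0 a_n| = 1$, contradicting $\deg a_0, \deg a_n \geq 1$; and (3) the identity $|p_k| = |a_0 a_1 \cdots a_k|$ requires $\deg a_0 \geq 1$, which is exactly why the lemma carries that hypothesis (the companion identity $|q_k| = |a_1 \cdots a_k|$ is already Lemma \ref{fund}(iii)). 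With those remarks, the argument is complete and matches the standard proof the paper defers to.
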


\begin{proof}
See the proof of Lemma 2 in \cite{Dubois1}.
\end{proof}

The following lemma is an analogue of Lemma 6.1 in \cite{Bugeaud2}.

\begin{lem}\label{conj2}
For $r,s\geq 1$, consider an ultimately periodic continued fraction \\
$\xi =[0,a_1,\ldots ,a_r,\overline{a_{r+1},\ldots ,a_{r+s}}] \in \lF _q((T^{-1}))$ with $a_r\neq a_{r+s}$.
Let $(p_n/q_n)_{n\geq 0}$ be the convergent sequence of $\xi $.
Then we have
\begin{eqnarray}\label{conj3}
\frac{\min (|a_r|, |a_{r+s}| )}{|q_r|^2}\leq |\xi -\xi '|\leq \frac{|a_r a_{r+s}|}{|q_r|^2}.
\end{eqnarray}
\end{lem}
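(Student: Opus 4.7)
The plan is to express $\xi-\xi'$ through the M\"obius relation of Lemma~\ref{fund}(iv) and then evaluate all of the resulting norms exactly, using the non-Archimedean (strong) triangle inequality throughout.

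Set $\beta:=[\overline{a_{r+1},\ldots,a_{r+s}}]$, which is quadratic by Theorem~\ref{Lagrange}. By Lemma~\ref{fund}(iv), $\xi=(\beta p_r+p_{r-1})/(\beta q_r+q_{r-1})$, and the same formula with $\beta$ replaced by $\beta'$ gives $\xi'$. Combining this with Lemma~\ref{fund}(i), a direct computation yields
\[
|\xi-\xi'|=\frac{|\beta-\beta'|}{|\beta q_r+q_{r-1}|\,|\beta' q_r+q_{r-1}|}.
\]
By Lemma~\ref{conj}, $-1/\beta'=\gamma$, where $\gamma:=[\overline{a_{r+s},a_{r+s-1},\ldots,a_{r+1}}]$, so $|\beta'|=1/|a_{r+s}|<1$, while $|\beta|=|a_{r+1}|\geq q>1$. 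The strong triangle inequality therefore gives $|\beta-\beta'|=|a_{r+1}|$ and $|\beta q_r+q_{r-1}|=|a_{r+1}|\,|q_r|$, and the $|a_{r+1}|$-factors cancel to leave
\[
|\xi-\xi'|=\frac{1}{|q_r|\,|\beta' q_r+q_{r-1}|}.
\]

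The heart of the argument is to show that
\[
|\beta'q_r+q_{r-1}|=\frac{|q_{r-1}|\,|a_r-a_{r+s}|}{|a_{r+s}|}.
\]
Writing $\beta'q_r+q_{r-1}=(q_{r-1}\gamma-q_r)/\gamma$ reduces the claim to $|q_{r-1}\gamma-q_r|=|q_{r-1}|\,|a_r-a_{r+s}|$. Using $q_r=a_rq_{r-1}+q_{r-2}$ and $\gamma=a_{r+s}+1/\gamma_2$, where $\gamma_2$ is a purely periodic continued fraction whose leading partial quotient has degree $\geq 1$, I obtain
\[
q_{r-1}\gamma-q_r=q_{r-1}\bigl((a_{r+s}-a_r)+1/\gamma_2\bigr)-q_{r-2}.
\]
Since $a_r-a_{r+s}$ is a nonzero element of $\lF_q[T]$, $|a_r-a_{r+s}|\geq 1>1/q\geq |1/\gamma_2|$, so the strong triangle inequality gives $|(a_{r+s}-a_r)+1/\gamma_2|=|a_r-a_{r+s}|$. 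For $r\geq 2$, $|q_{r-2}|=|q_{r-1}|/|a_{r-1}|<|q_{r-1}|\leq |q_{r-1}|\,|a_r-a_{r+s}|$, while for $r=1$ we have $q_{-1}=0$; in either case a second application of the strong triangle inequality gives the desired equality.

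Substituting back and using $|q_r|=|a_r|\,|q_{r-1}|$ produces the exact formula
\[
|\xi-\xi'|=\frac{|a_r a_{r+s}|}{|q_r|^2\,|a_r-a_{r+s}|},
\]
and the two inequalities in (\ref{conj3}) follow immediately from $1\leq |a_r-a_{r+s}|\leq \max(|a_r|,|a_{r+s}|)$: the lower bound because $a_r-a_{r+s}$ is a nonzero polynomial, and the upper bound from the usual triangle inequality. The main obstacle is the last paragraph: the strong triangle inequality has to be used as an equality three times in a row, which requires keeping track of the strict inequalities $|1/\gamma_2|<1\leq |a_r-a_{r+s}|$ (using $\deg a_n\geq 1$, hence $|a_n|\geq q>1$) and of $|q_{r-2}|<|q_{r-1}|$ (with the separate treatment of $r=1$).
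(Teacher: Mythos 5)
Your proof is correct and takes essentially the same route as the paper: both express $\xi-\xi'$ via the M\"obius relation of Lemma~\ref{fund}(iv) and Lemma~\ref{conj}, and reduce everything to the exact formula $|\xi-\xi'|=|a_r a_{r+s}|/\bigl(|q_r|^2\,|a_r-a_{r+s}|\bigr)$. The only difference is cosmetic: to evaluate $|\beta'q_r+q_{r-1}|$ you unwind the recurrence $q_r=a_rq_{r-1}+q_{r-2}$ directly, while the paper invokes Lemma~\ref{fund}(vi) to write $q_r/q_{r-1}=[a_r,\ldots,a_1]$ and compares it with $[\overline{a_{r+s},\ldots,a_{r+1}}]$, but both reduce to the same ultrametric comparison $|a_r-a_{r+s}|\geq 1>|1/\gamma_2|$.
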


\begin{proof}
Put $\tau :=[\overline{a_{r+1},\ldots ,a_{r+s}}]$.
By Lemma \ref{conj}, we have $\tau '=-[0,\overline{a_{r+s},\ldots ,a_{r+1}}]$.
Since
\begin{eqnarray*}
\xi =\frac{p_r \tau +p_{r-1}}{q_r \tau +q_{r-1}},\quad \xi ' =\frac{p_r \tau '+p_{r-1}}{q_r \tau '+q_{r-1}},
\end{eqnarray*}
we obtain
\begin{eqnarray*}
|\xi -\xi '|=\frac{|\tau -\tau '|}{|q_r \tau +q_{r-1}||q_r \tau '+q_{r-1}|}
\end{eqnarray*}
by Lemma \ref{fund} (i).
We see $|\tau -\tau '|=|a_{r+1}|$ and $|q_r \tau +q_{r-1}|=|q_r||a_{r+1}|$.
It follows from Lemma \ref{fund} (vi) that
\begin{eqnarray*}
|q_r \tau '+q_{r-1}| & = & |q_r|\left| \tau '+\frac{q_{r-1}}{q_r}\right|
= \frac{|q_r||[\overline{a_{r+s},\ldots ,a_{r+1}}]-[a_r,\ldots ,a_1]|}{|[\overline{a_{r+s},\ldots ,a_{r+1}}]||[a_r,\ldots ,a_1]|} \\
& = & \frac{|q_r||a_{r+s}-a_r|}{|a_r a_{r+s}|}.
\end{eqnarray*}
Since $1\leq |a_{r+s}-a_r|\leq \max (|a_{r+s}|,|a_r|)$, we obtain (\ref{conj3}).
\end{proof}

The lemma below is an analogue of Lemma 6.3 in \cite{Bugeaud2}.

\begin{lem}\label{h and l}
Let $b,c,d\in \lF_q[T]$ be distinct polynomials of degree at least one, $n\geq 1$ be an integer, and $a_1,\ldots ,a_{n-1} \in \lF _q[T]$ be polynomials of degree at least one.
Put
\begin{eqnarray*}
\xi :=[0,a_1,\ldots ,a_{n-1},c,\overline{b}].
\end{eqnarray*}
Then $\xi $ is quadratic and
\begin{eqnarray*}
H(\xi )\asymp _{b,c} |q_n|^2,
\end{eqnarray*}
where $(p_k/q_k)_{k\geq 0}$ is the convergent sequence of $\xi $.
Let $m\geq 2$ be an integer.
Set
\begin{eqnarray*}
\zeta :=[0,a_1,\ldots ,a_{n-1},c,\overline{b,\ldots ,b,d}],
\end{eqnarray*}
where the length of period part of $\zeta $ is $m$. 
Then $\zeta $ is quadratic and
\begin{eqnarray*}
H(\zeta )\asymp _{b,c,d} |\tilde{q}_n \tilde{q}_{n+m}|,
\end{eqnarray*}
where $(\tilde{p}_k/\tilde{q}_k)_{k\geq 0}$ is the convergent sequence of $\zeta $.
\end{lem}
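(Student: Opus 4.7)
Both $\xi$ and $\zeta$ are ultimately periodic, hence quadratic by Theorem \ref{Lagrange}. The plan is, in each case, to take the explicit degree-$2$ polynomial given by Lemma \ref{height upper}, show that it is primitive (up to a scalar in $\lF_q^{\times}$), and then estimate the magnitude of its leading coefficient. The upper bounds $H(\xi) \leq |q_n q_{n+1}| = |b||q_n|^2$ and $H(\zeta) \leq |\tilde{q}_n \tilde{q}_{n+m}|$ are immediate from Lemma \ref{height upper}; the matching lower bounds will follow once we identify these polynomials (up to a unit) with the minimal polynomials.

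For primitivity, I would write $\xi = (\tau p_n + p_{n-1})/(\tau q_n + q_{n-1})$ with $\tau = [\overline{b}]$ purely periodic, and $\zeta = (\sigma \tilde{p}_n + \tilde{p}_{n-1})/(\sigma \tilde{q}_n + \tilde{q}_{n-1})$ with $\sigma = [\overline{b,\ldots,b,d}]$ purely periodic (Lemma \ref{fund} (iv)). In each case the associated $2\times 2$ matrix has determinant $\pm 1$ by Lemma \ref{fund} (i), so the M\"obius substitution is unimodular over $\lF_q[T]$. The polynomials for $\tau$ (namely $Y^2 - bY - 1$) and for $\sigma$ (namely $q^{\sigma}_{m-1}Y^2 + (q^{\sigma}_{m-2} - p^{\sigma}_{m-1})Y - p^{\sigma}_{m-2}$, where $p^{\sigma}_k/q^{\sigma}_k$ are the convergents of $[b,\ldots,b,d]$) are primitive: their leading and constant coefficients are coprime by Lemma \ref{fund} (i). A Gauss-lemma argument then propagates primitivity through the substitution: any prime $\pi \in \lF_q[T]$ dividing every coefficient of the substituted polynomial would, on reduction modulo $\pi$, make it vanish identically, and invertibility of the M\"obius mod $\pi$ (the determinant is still a unit) then forces $\pi$ to divide the original primitive polynomial of $\tau$ or $\sigma$, a contradiction. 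Hence $H(\xi) = H(P_\xi)$ and $H(\zeta) = H(P_\zeta)$.

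It then remains to estimate the leading coefficient. For $\xi$, this equals $q_{n-1}q_{n+1} - q_n^2 = q_n\bigl((b-c)q_{n-1} - q_{n-2}\bigr)$, and using $|b-c|\geq 1$ (from $b\neq c$) together with $|q_{n-2}| < |q_{n-1}|$ (from $\deg a_{n-1}\geq 1$), its magnitude is $\asymp_{b,c} |q_n|^2$. For $\zeta$, one uses the matrix identities $\tilde{q}_{n+m} = p^{\sigma}_{m-1}\tilde{q}_n + q^{\sigma}_{m-1}\tilde{q}_{n-1}$ and $\tilde{q}_{n+m-1} = p^{\sigma}_{m-2}\tilde{q}_n + q^{\sigma}_{m-2}\tilde{q}_{n-1}$ together with $\tilde{q}_n = c\tilde{q}_{n-1} + \tilde{q}_{n-2}$ to find that the dominant term of $A_2 = \tilde{q}_{n-1}\tilde{q}_{n+m} - \tilde{q}_n\tilde{q}_{n+m-1}$ is $(d-c)p^{\sigma}_{m-2}\tilde{q}_{n-1}\tilde{q}_n$, of magnitude $|d-c||b|^{m-1}|c||\tilde{q}_{n-1}|^2$. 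Distinctness of $b,c,d$ forces $|d-c|\geq 1$, so this term survives and yields $|A_2| \asymp_{b,c,d} |\tilde{q}_n \tilde{q}_{n+m}|$. The main obstacle is the primitivity step for $P_\zeta$; once the Gauss-lemma argument is carried out carefully, the magnitude estimates are routine computations via the standard continued-fraction recursions.
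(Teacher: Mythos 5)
Your approach is genuinely different from the paper's. The paper never identifies the minimal polynomial explicitly: for $\xi$ it writes $P_\xi(X)=A(X-\xi)(X-\xi')$, observes $|P_\xi(p_n/q_n)|\geq |q_n|^{-2}$ (nonzero numerator over $q_n^2$), bounds $|\xi-p_n/q_n|,|\xi'-p_n/q_n|\ll_{b,c}|q_n|^{-2}$ via Lemma \ref{fund}(v) and Lemma \ref{conj2}, and deduces $|A|\gg_{b,c}|q_n|^2$; for $\zeta$ it bounds $1\leq|\Res(P_\zeta,P_\xi)|$ from above using the four pairwise distances $|\xi-\zeta|$, $|\xi'-\zeta|$, etc. This is entirely uniform in $m$ and avoids any combinatorial analysis of the explicit polynomial from Lemma \ref{height upper}. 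Your plan instead pins down the minimal polynomial by a primitivity argument and reads off the leading coefficient, which is a legitimate alternative route in principle.

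For $\xi$ your argument goes through. The stated identity $q_{n-1}q_{n+1}-q_n^2=q_n\bigl((b-c)q_{n-1}-q_{n-2}\bigr)$ is not correct — expanding with $q_{n+1}=bq_n+q_{n-1}$ and $q_n=cq_{n-1}+q_{n-2}$ gives $q_{n-1}q_{n+1}-q_n^2=(b-c)q_{n-1}q_n+q_{n-1}^2-q_nq_{n-2}$, which differs from your expression by $q_{n-1}^2$ — but that extra term has strictly smaller absolute value than $(b-c)q_{n-1}q_n$, so the ultrametric magnitude estimate you want is still valid. And $Y^2-bY-1$ is monic, hence primitive, so the Gauss-lemma transfer under the unimodular M\"obius substitution is sound.

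The $\zeta$ case has a real gap. You assert that $q^{\sigma}_{m-1}Y^2+(q^{\sigma}_{m-2}-p^{\sigma}_{m-1})Y-p^{\sigma}_{m-2}$ is primitive because ``leading and constant coefficients are coprime by Lemma \ref{fund}(i).'' Lemma \ref{fund}(i) gives $\gcd(p^{\sigma}_{k},q^{\sigma}_{k})=1$ for each $k$; it says nothing about $\gcd(q^{\sigma}_{m-1},p^{\sigma}_{m-2})$, and this gcd can be nontrivial. For instance with $b=T$, $d=T^2$, $m=2$ one gets $p^{\sigma}_0=T$, $q^{\sigma}_1=T^2$, $p^{\sigma}_1=T^3+1$, $q^{\sigma}_0=1$, so the polynomial is $T^2Y^2-T^3Y-T$, which has content $T$. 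Since the Gauss-lemma step carries the content of $Q_\sigma$ straight through to the degree-two polynomial for $\zeta$, that polynomial need not be primitive either, and the identification $H(\zeta)=H(P_\zeta)$ with the Lemma \ref{height upper} polynomial (which is what your lower bound hinges on) is not established. To save the argument you would need to show the content of $Q_\sigma$ is bounded in terms of $b,d$ alone, uniformly in $m$; you neither state nor prove this, and it is not an immediate consequence of the determinant relations you cite. The paper's resultant argument sidesteps this entirely and is the safer route.
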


\begin{proof}
It follows from Theorem \ref{Lagrange} that $\xi $ and $\zeta $ are quadratic.
By Lemma \ref{height upper}, we have $H(\xi ) \ll _{b,c} |q_n|^2$ and $H(\zeta ) \ll _{b,c,d} |\tilde{q}_n \tilde{q}_{n+m}|$.
Let $P_\xi (X)=A(X-\xi )(X-\xi ')$ be the minimal polynomial of $\xi $.
Since $P_\xi (p_n/q_n)$ is non-zero, we obtain $|P_\xi (p_n/q_n)|\geq 1/|q_n|^2$.
From Lemma \ref{fund} (v) and \ref{conj2}, it follows that
\begin{eqnarray*}
\left| \xi -\frac{p_n}{q_n}\right| , \left| \xi '-\frac{p_n}{q_n}\right| \ll _{b,c} \frac{1}{|q_n|^2 }.
\end{eqnarray*}
Therefore, we obtain $|q_n|^2 \ll _{b,c} |A| \ll _{b,c} H(\xi )$.
We denote by $P_\zeta (X)$ the minimal polynomial of $\zeta $.
Since $P_\zeta $ and $P_\xi $ do not have a common root, we have
\begin{eqnarray*}
1\leq |\Res (P_\zeta ,P_\xi )| \leq H(\zeta )^2 H(\xi )^2 |\xi -\zeta ||\xi '-\zeta ||\xi -\zeta '||\xi '-\zeta '|.
\end{eqnarray*}
Note that $q_n=\tilde{q}_n$.
By Lemma \ref{conj}, we obtain
\begin{eqnarray*}
|\xi -\zeta |\ll _{b,c,d} |\tilde{q}_{n+m}|^{-2},\quad |\xi '-\zeta |, |\xi -\zeta '|, |\xi '-\zeta '|\ll _{b,c,d} |\tilde{q}_n|^{-2}.
\end{eqnarray*}
Therefore, it follows that $1 \ll _{b,c,d} H(\zeta )^2 H(\xi )^2 |\tilde{q}_n|^{-6} |\tilde{q}_{n+m}|^{-2}$.
Hence, we have the inequality $|\tilde{q}_n \tilde{q}_{n+m}|\ll _{b,c,d} H(\zeta )$.
\end{proof}

The next lemma is a well-known result.

\begin{lem}\label{irr ex}
Consider a continued fraction $\xi =[a_0,a_1,a_2,\ldots ]\in \lF _q((T^{-1}))$.
Let $(p_n/q_n)_{n\geq 0}$ be the convergent sequence of $\xi $.
Then we have
\begin{eqnarray*}
w_1 (\xi )= \limsup_{n\rightarrow \infty } \frac{\deg q_{n+1}}{\deg q_n}.
\end{eqnarray*}
\end{lem}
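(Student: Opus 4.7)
The plan is to prove the equality by establishing both inequalities, with the convergents $P_n(X) := q_n X - p_n \in (\lF_q[T])[X]$ as the natural test polynomials.

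For the lower bound $w_1(\xi) \ge \limsup_n \deg q_{n+1}/\deg q_n$, I would use the polynomial $P_n(X) = q_nX - p_n$, which has degree $\le 1$. By Lemma \ref{fund}(v), $|P_n(\xi)| = |q_{n+1}|^{-1}$. For large $n$, the identity $p_n = q_n \xi - (q_n\xi - p_n)$ combined with $|q_n\xi - p_n| = |q_{n+1}|^{-1}$ and the ultrametric inequality forces $|p_n| = |q_n|\max(1,|\xi|)$ once $|q_n|$ is large enough. Consequently $H(P_n) \asymp _{\xi} |q_n|$, and
\begin{eqnarray*}
\frac{-\log |P_n(\xi)|}{\log H(P_n)} = \frac{\log |q_{n+1}|}{\log |q_n| + O_{\xi}(1)} = \frac{\deg q_{n+1}}{\deg q_n} + o(1)
\end{eqnarray*}
as $n\to\infty$ (using that $\deg q_n \to \infty$, since $\deg a_n \ge 1$). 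Taking $\limsup$ over $n$ yields the first inequality.

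For the upper bound, the key technical step is a non-Archimedean "best approximation" statement: if $P(X) = aX - p \in (\lF_q[T])[X]$ has $0 < |P(\xi)| < 1/H(P)$, then $(a,p) = \mu (q_n, p_n)$ for some $n \ge 0$ and $\mu \in \lF_q[T]\setminus\{0\}$. To prove this, write $p/a = p'/q'$ in lowest terms; the assumption gives $|q'\xi - p'| < 1/|q'|$. Let $n$ be the unique integer with $|q_n| \le |q'| < |q_{n+1}|$. If $p'/q' \ne p_n/q_n$, then the nonzero polynomial $p'q_n - p_n q'$ has absolute value $\ge 1$, so $|p'/q' - p_n/q_n| \ge 1/(|q'||q_n|)$; but the ultrametric inequality together with $|\xi - p'/q'| < 1/|q'|^2$ and $|\xi - p_n/q_n| = 1/(|q_n||q_{n+1}|)$ bounds this difference strictly below $1/(|q'||q_n|)$, a contradiction. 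Hence $p'/q' = p_n/q_n$.

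With this claim, every $P$ with $H(P) \le H$ and $P(\xi) \ne 0$ either satisfies $|P(\xi)| \ge 1/H$ or has the form $P = \mu P_n$ with $|P(\xi)| = |\mu|/|q_{n+1}|$ and $H(P) = |\mu|H(P_n)$. Fixing $H$ and letting $n^* = n^*(H)$ be the largest index with $H(P_{n^*}) \le H$, minimizing $|P(\xi)|$ over both cases gives $w_1(\xi,H) \ge \min(1/H,\,1/|q_{n^*+1}|)$. Since $H(P_{n^*+1}) > H$ forces $|q_{n^*+1}| > H/\max(1,|\xi|)$, we obtain
\begin{eqnarray*}
\frac{-\log w_1(\xi,H)}{\log H} \le \frac{\log |q_{n^*+1}|}{\log |q_{n^*}| + O_{\xi}(1)},
\end{eqnarray*}
and taking $\limsup$ as $H\to\infty$ (equivalently, $n^* \to \infty$) yields the reverse inequality.

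The main obstacle is the non-Archimedean best-approximation claim in the second paragraph: one must carefully carry out the ultrametric comparison with the neighbouring convergent to conclude that any rational approximating $\xi$ to order better than $1/|q'|^2$ is necessarily a convergent. Everything else is a bookkeeping of heights and logarithms, with the $\max(1,|\xi|)$ factor in $H(P_n)$ absorbed into the $o(1)$ terms as $n\to\infty$.
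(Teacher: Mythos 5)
Your proof is correct. The paper labels this lemma ``a well-known result'' and gives no proof, so there is nothing in the paper to compare against; your argument is the standard one and it fills the gap cleanly.

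A few small remarks on details you glossed over, none of which is a gap. In the lower bound, the identity $H(P_n)=|q_n|\max(1,|\xi|)$ for large $n$ uses that $\xi$ is irrational (so the continued fraction is infinite, $\deg q_n\to\infty$, and $|p_n/q_n|=|\xi|$ for $n$ large by the ultrametric inequality once $|\xi-p_n/q_n|<|\xi|$). The statement ``$\limsup a_n b_n=\limsup a_n$ when $b_n\to 1$, $a_n\geq 0$'' is what you are invoking to pass from $\frac{\deg q_{n+1}}{\deg q_n+O_\xi(1)}$ to $\frac{\deg q_{n+1}}{\deg q_n}$; this does hold, including when the limsup is infinite. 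In the best-approximation step, when $|q_n|=|q'|$ one only gets $\frac{1}{|q'|^2}\leq\frac{1}{|q'||q_n|}$ with equality, but this is fine because the bound $|\xi-p'/q'|<\frac{1}{|q'|^2}$ is strict, so the ultrametric max is still strictly below $\frac{1}{|q'||q_n|}$. In the very last display it would be slightly cleaner to write
\begin{eqnarray*}
\frac{-\log w_1(\xi,H)}{\log H}\leq\frac{\log|q_{n^*+1}|+O_\xi(1)}{\log|q_{n^*}|},
\end{eqnarray*}
coming from $\log H<\log H(P_{n^*+1})=\log|q_{n^*+1}|+O_\xi(1)$ and $\log H\geq\log|q_{n^*}|$, rather than pushing the $O_\xi(1)$ into the denominator; either way the limsup is unchanged. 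Finally, one should observe that degree-zero polynomials $P=b\in\lF_q[T]\setminus\{0\}$ satisfy $|P(\xi)|=H(P)\geq 1/H(P)$, so they fall automatically into the first branch of your dichotomy; you implicitly assume this by writing $P=aX-p$ with $a\neq 0$ in the reduction step.
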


The lemma below is an analogue of Lemma 5.6 in \cite{Adamczewski3.5}.

\begin{lem}
Consider a continued fraction $\xi =[a_0,a_1,a_2,\ldots ]\in \lF _q((T^{-1}))$.
Let $(p_n/q_n)_{n\geq 0}$ be the convergent sequence of $\xi $.
If the sequence $(|q_n|^{1/n})_{n\geq 1}$ is bounded, then $\xi $ is not a $U_1$-number.
\end{lem}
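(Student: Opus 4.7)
The plan is to apply Lemma \ref{irr ex}, which identifies $w_1(\xi)$ with $\limsup_{n\to\infty} \deg q_{n+1} / \deg q_n$, and to use the hypothesis to produce a uniform upper bound for that ratio. Showing that $w_1(\xi)$ is finite already rules out $\xi$ being a $U_1$-number, since by definition a $U_1$-number satisfies $w_1(\xi) = +\infty$.

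First I would translate the hypothesis into a statement about $\deg q_n$. Since $|q_n| = q^{\deg q_n}$, the assumption that $(|q_n|^{1/n})_{n\ge 1}$ is bounded is equivalent to the existence of a constant $C > 0$ with $\deg q_n \le C\,n$ for every $n\ge 1$. Second, I would supply the matching lower bound: because $\xi = [a_0,a_1,a_2,\dots]$ is a continued fraction in $\lF_q((T^{-1}))$, every partial quotient $a_i$ ($i\ge 1$) has $\deg a_i \ge 1$, and Lemma \ref{fund}(iii) gives $|q_n| = |a_1|\cdots|a_n| \ge q^n$, hence $\deg q_n \ge n$.

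Combining the two bounds yields, for every $n\ge 1$,
\begin{equation*}
\frac{\deg q_{n+1}}{\deg q_n} \le \frac{C(n+1)}{n} \le 2C.
\end{equation*}
Taking $\limsup$ and invoking Lemma \ref{irr ex} gives $w_1(\xi) \le 2C < +\infty$. Consequently $\xi$ cannot be a $U_1$-number, completing the proof.

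There is no real obstacle here; the only point that requires care is making sure that the hypothesis $\deg a_n \ge 1$ (implicit in the continued fraction convention for Laurent series, and used via Lemma \ref{fund}(iii)) provides the lower bound $\deg q_n \ge n$ that converts an $n$-th root control of $|q_n|$ into a uniform bound on the ratio $\deg q_{n+1}/\deg q_n$.
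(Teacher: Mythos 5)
Your proposal is correct and follows essentially the same route as the paper: both use Lemma \ref{fund}(iii) to get $\deg q_n \ge n$, translate the boundedness hypothesis into $\deg q_n \le Cn$, and then feed the resulting uniform bound on $\deg q_{n+1}/\deg q_n$ into Lemma \ref{irr ex} to conclude $w_1(\xi) < +\infty$. The paper simply takes the $\limsup$ a bit more carefully to get $w_1(\xi) \le \log A/\log q$ rather than the cruder $2C$, but this refinement is irrelevant to the conclusion.
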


\begin{proof}
By the assumption, there exists an integer $A$ such that $q^n \leq |q_n|\leq A^n$ for all $n\geq 1$.
Thus, for all $n\geq 1$, we have
\begin{eqnarray*}
\frac{\deg q_{n+1}}{\deg q_n} \leq \left( 1+\frac{1}{n} \right) \frac{\log A}{\log q}.
\end{eqnarray*}
By Lemma \ref{irr ex}, we obtain $w_1 (\xi )\leq \log A/\log q$.
\end{proof}

\section{Properties of $w_n$ and $w_n ^{*}$}\label{Properties sec}

\begin{thm}\label{Mahler lower}
Let $n\geq 1$ be an integer and $\xi \in \lF _q((T^{-1}))$ be not algebraic of degree at most $n$.
Then we have
\begin{align*}
w_n (\xi ) \geq n,\quad w_n ^{*}(\xi ) & \geq \frac{n+1}{2}.
\end{align*}
Furthermore, if $n=2$, then $w_2 ^{*}(\xi )\geq 2$.
\end{thm}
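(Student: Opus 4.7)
The plan is to prove each of the three inequalities in turn by standard Dirichlet/Wirsing-style arguments adapted to the non-archimedean setting.

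For the first inequality $w_n(\xi) \geq n$, I would carry out a Dirichlet box-principle argument in $\lF_q((T^{-1}))$. For each integer $H \geq 1$, consider the $q^{(n+1)(H+1)}$ polynomials $P(X) = \sum_{i=0}^{n} c_i X^i$ with $c_i \in \lF_q[T]$ of degree at most $H$. After normalizing so that $|\xi|$ is bounded (or tracking it through the argument), the values $P(\xi) \in \lF_q((T^{-1}))$ lie in a ball whose Laurent coefficients in the relevant positions can be listed in at most $q^{nH + c}$ cells for an absolute constant $c$. Since $q^{(n+1)(H+1)} > q^{nH + c}$ for $H$ large enough, pigeonhole yields two polynomials $P_1 \neq P_2$ mapping to the same cell; their difference $P = P_1 - P_2$ satisfies $\deg_X P \leq n$, $H(P) \leq q^H$, and $|P(\xi)| \leq q^{-nH - c} \ll H(P)^{-n}$. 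Since $\xi$ is not algebraic of degree at most $n$, each such nonzero $P$ gives $P(\xi) \neq 0$, and letting $H$ tend to infinity produces infinitely many such $P$, so $w_n(\xi) \geq n$.

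For the second inequality $w_n^*(\xi) \geq (n+1)/2$, I would extract from each polynomial $P$ of the previous part a close algebraic approximant. Factor $P = A \prod_{i=1}^{r} (X - \alpha_i)^{t_i}$ in $\overline{\lF_q(T)}$ and choose $\alpha = \alpha_1$ minimizing $|\xi - \alpha_i|$. The minimal polynomial of $\alpha$ divides $P$, and since $H(QR) = H(Q)H(R)$ by Lemma \ref{height} with every nonzero factor of height at least one, we get $H(\alpha) \leq H(P)$. The ultrametric inequality gives $|\xi - \alpha_i| \geq |\xi - \alpha|$ for all $i$, hence $|P(\xi)| \geq |A| |\xi - \alpha|^d$ with $d = \deg_X P$. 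Combining this with the product formula $H(P) = |A| \prod \max(1, |\alpha_i|)$ from Lemma \ref{height} and the Liouville lower bounds on $|\alpha - \alpha_i|$ from Proposition \ref{Liouville inequ1}, a balancing of the two regimes (all roots clustered near $\xi$ versus only $\alpha$ near $\xi$) yields $|\xi - \alpha| \leq H(\alpha)^{-(n+1)/2 - 1}$ and hence $w_n^*(\xi) \geq (n+1)/2$.

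For the sharper bound $w_2^*(\xi) \geq 2$, I would invoke a more refined argument specific to the quadratic case, essentially a non-archimedean analogue of Davenport--Schmidt. Apply a geometry-of-numbers argument (Minkowski's theorem for lattices over $\lF_q[T]$) to the region $\{(A, B, C) \in (\lF_q[T])^3 : \max(|A|, |B|, |C|) \leq H,\ |A \xi^2 + B \xi + C| \leq H^{-2}\}$ to produce, for each $H$, a non-trivial quadratic polynomial $P$ with sharp control on the factor $|A||\alpha - \alpha'|$ appearing in the factorization $|P(\xi)| = |A||\xi - \alpha||\xi - \alpha'|$. For the irreducible quadratic case, writing $P(X) = A(X - \alpha)(X - \alpha')$ with $\alpha$ the root closer to $\xi$ and using the ultrametric identity $|\xi - \alpha'| = |\alpha - \alpha'|$ (which holds when $|\xi - \alpha| < |\alpha - \alpha'|$), together with $H(P) = |A| \max(1, |\alpha|) \max(1, |\alpha'|)$ from Lemma \ref{height} and the sharp two-sided bound $H(\alpha)^{-1} \leq |\alpha - \alpha'| \leq H(\alpha)$ from Lemma \ref{Galois conj}, one deduces $|\xi - \alpha| \leq H(\alpha)^{-3}$, i.e.\ $w_2^*(\xi) \geq 2$. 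The linear case and the case where $P$ splits over $\lF_q(T)$ are handled separately (the latter reducing to rational Dirichlet approximation).

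The main obstacle is Part 3: the extra factor of $1/2$ beyond the Wirsing bound requires exploiting the special structure available for $n = 2$ (the existence of a single Galois conjugate $\alpha'$, allowing the use of Lemma \ref{Galois conj}) and carefully handling the subcases according to the relative sizes of $|A|$, $|\xi - \alpha|$, and $|\alpha - \alpha'|$; in the opposite regime $|\xi - \alpha| \geq |\alpha - \alpha'|$ one has $|\xi - \alpha'| = |\xi - \alpha|$ and needs a sharper input than the one from Part 1. By contrast, Parts 1 and 2 are essentially non-archimedean transcriptions of the classical real-variable arguments, where the ultrametric actually simplifies the estimates by replacing sums of distances with maxima.
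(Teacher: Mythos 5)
The paper does not actually prove this theorem: the first inequality is attributed to Mahler's analogue of Minkowski's theorem \cite{Mahler2}, and the second and third are attributed to Satz 1 and Satz 2 of Guntermann \cite{Guntermann}. You attempt to reconstruct full proofs, which is a different (and harder) undertaking than what the paper does. Your Part 1 (pigeonhole/Dirichlet to get $w_n(\xi)\geq n$) is a standard and correct reproduction of the Minkowski-for-$\lF_q[T]$-lattices argument and would go through.

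Parts 2 and 3 have genuine gaps. For Part 2, the phrase ``a balancing of the two regimes yields $|\xi-\alpha|\leq H(\alpha)^{-(n+1)/2-1}$'' is not an argument. The naive bound you set up, $|P(\xi)|\geq |A||\xi-\alpha|^d$, gives only $|\xi-\alpha|\leq H(P)^{-n/d}$, i.e.\ $H(P)^{-1}$ when $d=n$, far short of the Wirsing exponent; the actual Wirsing-type argument has to deal with clusters of roots near $\xi$ using resultant estimates between pairs of polynomials, and in positive characteristic one must also handle inseparable factors (precisely the difficulty addressed by Lemma~\ref{insep} and discussed in the remark after Proposition~\ref{main7}), which your sketch ignores. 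For Part 3, the chain of inequalities you describe does not yield $|\xi-\alpha|\leq H(\alpha)^{-3}$: from $|P(\xi)|=|A||\xi-\alpha||\alpha-\alpha'|\leq H(\alpha)^{-2}$ and the Lemma~\ref{Galois conj} lower bound $|\alpha-\alpha'|\geq H(\alpha)^{-1}$ one gets only $|\xi-\alpha|\leq H(\alpha)^{-2}/(|A||\alpha-\alpha'|)\leq H(\alpha)^{-1}$, since the Galois-conjugate lower bound points the wrong way. To conclude $H(\alpha)^{-3}$ you would need $|A||\alpha-\alpha'|\gg H(\alpha)$, i.e.\ a lower bound $|\Disc(P)|\gg H(P)^2$ on the discriminant of the polynomial produced by the geometry-of-numbers step, and Minkowski alone gives no such control — a polynomial with tiny discriminant is perfectly compatible with $|P(\xi)|\leq H(P)^{-2}$. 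Producing such $P$, or circumventing the small-discriminant case, is exactly the hard content of the Davenport--Schmidt theorem and of Guntermann's Satz 2, and it is missing from the proposal.
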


\begin{proof}
The former estimate follows from an analogue of Minkowski's theorem for Laurent series over a finite field \cite{Mahler2} and the later estimates are Satz.1 and Satz.2 of \cite{Guntermann}.
\end{proof}

We give an immediate consequence of Proposition \ref{Liouville inequ1} and \ref{Liouville}.

\begin{thm}\label{alg}
Let $n\geq 1$ be an integer and $\xi \in \lF _q((T^{-1}))$ be an algebraic number of degree $d$.
Then we have
\begin{align*}
w_n (\xi ), w_n ^{*}(\xi ) \leq d-1.
\end{align*}
\end{thm}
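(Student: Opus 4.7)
The plan is to derive both inequalities directly from the Liouville inequalities of Section~\ref{Liouville sec}: Proposition~\ref{Liouville inequ1} gives the bound on $w_n(\xi)$ and Proposition~\ref{Liouville} gives the bound on $w_n ^{*}(\xi)$. In each case the Liouville estimate produces an exponent of the varying height of the form $d/u$ or $d/(fg)$ involving inseparable degrees, and the target exponent $d-1$ is obtained simply by discarding the favourable factor, which is $\geq 1$.

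For $w_n(\xi) \leq d-1$, fix $P(X) \in (\lF_q[T])[X]$ with $\deg_X P = m \leq n$ and $P(\xi) \neq 0$. Let $Q(X)$ denote the minimal polynomial of $\xi$, of degree $d$, and put $u := \insep \xi$, so that $\xi$ is a root of $Q$ of order $u$. Applying Proposition~\ref{Liouville inequ1} with $\beta = \xi$ yields
\begin{eqnarray*}
|P(\xi)| \geq \max(1,|\xi|)^m \, H(P)^{-d/u + 1} \, H(Q)^{-m/u} \gg _{\xi} H(P)^{-(d/u - 1)} \geq H(P)^{-(d-1)},
\end{eqnarray*}
where the second estimate absorbs $\max(1,|\xi|)$, $H(Q)$ and $m/u \leq n$ into a constant depending only on $\xi$, and the third uses $u \geq 1$ together with $H(P) \geq 1$. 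Substituting this into the definition of $w_n(\xi)$ and passing to the limsup as $H \to \infty$ gives $w_n(\xi) \leq d-1$.

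For $w_n ^{*}(\xi) \leq d-1$, fix $\alpha \in \overline{\lF_q(T)}$ of degree $m \leq n$ with $\alpha \neq \xi$, and put $f := \insep \alpha$, $g := \insep \xi$. Proposition~\ref{Liouville} gives
\begin{eqnarray*}
|\xi - \alpha| \geq \max(1,|\alpha|) \max(1,|\xi|) \, H(\alpha)^{-d/(fg)} \, H(\xi)^{-m/(fg)} \gg _{\xi} H(\alpha)^{-d},
\end{eqnarray*}
since $\max(1,|\alpha|) \geq 1$, $fg \geq 1$, and $H(\xi)$ together with $m/(fg) \leq n$ contribute only to a $\xi$-dependent constant. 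Multiplying by $H(\alpha)$ and inserting the resulting lower bound into the limsup defining $w_n ^{*}(\xi)$ yields $w_n ^{*}(\xi) \leq d-1$.

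No substantive obstacle arises: the whole proof is bookkeeping. The only point requiring attention is isolating the $\xi$-dependent quantities $|\xi|$, $H(Q)$, $u$ and $g$ into a single multiplicative constant that vanishes upon taking $\limsup_{H \to \infty}\log(\cdot)/\log H$, so that only the exponent of the varying height $H(P)$ or $H(\alpha)$ survives in the limit.
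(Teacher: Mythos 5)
Your proof is correct and takes essentially the same route the paper intends: the theorem is stated as an immediate consequence of Propositions~\ref{Liouville inequ1} and~\ref{Liouville}, and you apply exactly those, absorbing the $\xi$-dependent factors into a constant that disappears in the limsup. The only cosmetic difference is that the paper remarks immediately after the theorem that $\insep\xi = 1$ for algebraic $\xi \in \lF_q((T^{-1}))$, so $u = g = 1$ there, whereas you keep the inseparable degrees symbolic and use only $u, g \geq 1$; this changes nothing in the conclusion.
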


Note that if $\xi \in \lF _q((T^{-1}))$ be an algebraic number, then $\insep \xi =1$.

We next show that we can replace the definition of $w_n$ by a weak definition of $w_n$.
Let $n\geq 1$ be an integer and $\xi $ be in $\lF _q((T^{-1}))$.
We define a Diophantine exponent $\tilde{w}_n$ at $\xi $ by the supremum of a real number $w$ for which there exist infinitely many $P(X) \in (\lF _q[T])[X]_{\min }$ of degree at most $n$ such that
\begin{align*}
0<|P(\xi )|\leq H(P)^{-w}.
\end{align*}

The lemma below is a slight improvement of a result in \cite[Section 3.4]{Sprindzuk1}.

\begin{lem}\label{ok}
Let $n\geq 1$ be an integer and $\xi $ be in $\lF _q((T^{-1}))$.
Then we have
\begin{eqnarray*}
w_n (\xi )=\tilde{w}_n (\xi ).
\end{eqnarray*}
\end{lem}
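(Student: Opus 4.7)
The direction $\tilde{w}_n(\xi) \leq w_n(\xi)$ is immediate from the inclusion $(\lF_q[T])[X]_{\min} \subset (\lF_q[T])[X]$, so the content of the lemma is the reverse inequality. The plan is to establish $w_n(\xi) \leq \tilde{w}_n(\xi)$ by induction on $n$, using the multiplicativity of height furnished by Lemma \ref{height} to pass from an arbitrary approximating polynomial $P$ to one of its irreducible factors.

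Assume for contradiction $\tilde{w}_n(\xi) < w < w_n(\xi)$. The left inequality produces a \emph{finite} set
\[
\Pi := \{Q \in (\lF_q[T])[X]_{\min} : \deg_X Q \leq n,\ Q(\xi) \neq 0,\ |Q(\xi)| \leq H(Q)^{-w}\},
\]
and the right inequality produces infinitely many $P \in (\lF_q[T])[X]$ with $\deg_X P \leq n$, $P(\xi) \neq 0$, and $|P(\xi)| \leq H(P)^{-w}$. For each such $P$, factor $P = c \prod_i P_i^{e_i}$ with $c \in \lF_q[T] \setminus \{0\}$ and distinct $P_i \in (\lF_q[T])[X]_{\min}$. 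Lemma \ref{height} gives $H(P) = |c| \prod_i H(P_i)^{e_i}$, while multiplicativity of $|\cdot|$ gives $|P(\xi)| = |c| \prod_i |P_i(\xi)|^{e_i}$. If every $P_i$ satisfied $|P_i(\xi)| > H(P_i)^{-w}$ strictly, then since $|c|^{w+1} \geq 1$ (using $|c| \geq 1$ and $w \geq 0$) I would obtain $|P(\xi)| > H(P)^{-w}$, a contradiction. Hence at least one $P_i$ lies in $\Pi$.

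By finiteness of $\Pi$ and the pigeonhole principle, a single $Q_0 \in \Pi$ must divide infinitely many of these $P$'s. Writing $P = Q_0 R$ with $R \in (\lF_q[T])[X]$ and applying Lemma \ref{height} once more, the bound $|P(\xi)| \leq H(P)^{-w}$ translates to
\[
|R(\xi)| \leq C\, H(R)^{-w}, \qquad C := H(Q_0)^{-w}/|Q_0(\xi)| \geq 1 \text{ fixed.}
\]
If $\deg_X Q_0 = n$, then $R \in \lF_q[T]$ is a nonzero constant in $X$ and $|R(\xi)| = H(R)$, forcing $H(R)^{w+1} \leq C$ and bounding the $R$'s, contradicting that infinitely many of them exist; this already closes the base case $n=1$. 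If $\deg_X Q_0 < n$, then $\deg_X R \leq n-1$, the $H(R)$ are unbounded, and absorbing $C$ into the exponent gives $|R(\xi)| \leq H(R)^{-w+\epsilon}$ for every $\epsilon > 0$ and all sufficiently large $H(R)$. This yields $w_{n-1}(\xi) \geq w - \epsilon$; the inductive hypothesis $w_{n-1}(\xi) = \tilde{w}_{n-1}(\xi)$ together with the trivial $\tilde{w}_{n-1}(\xi) \leq \tilde{w}_n(\xi)$ forces $\tilde{w}_n(\xi) \geq w$, contradicting the choice of $w$.

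The main technical point — the only one beyond routine bookkeeping — is the factor-extraction step, where one must verify that an arbitrary $P$ with the approximation property genuinely transmits that property to one of its $(\lF_q[T])[X]_{\min}$-factors. This is exactly what the multiplicative height of Lemma \ref{height} together with the non-Archimedean valuation (ensuring that the strict inequality $|P_i(\xi)| > H(P_i)^{-w}$ survives taking products against the harmless factor $|c|^{w+1} \geq 1$) provides; everything else is a pigeonhole and a height cancellation.
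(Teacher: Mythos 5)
Your proof is correct, but it takes a genuinely different route from the paper's. Both arguments turn on the same engine: factoring $P$ into $(\lF_q[T])[X]_{\min}$-elements and exploiting the multiplicativity of the height from Lemma~\ref{height}, together with the non-Archimedean estimate $|c|^{w+1}\geq 1$ for the content $c$. The paper, however, avoids induction and pigeonhole entirely. Taking $\tilde{w}>\tilde{w}_n(\xi)$, it observes that the set of $Q\in(\lF_q[T])[X]_{\min}$ of degree at most $n$ with $Q(\xi)\neq 0$ and $|Q(\xi)|\leq H(Q)^{-\tilde{w}}$ is finite, so there is a uniform constant $C>0$ with $|Q(\xi)|\geq C\,H(Q)^{-\tilde{w}}$ for \emph{all} such $Q$; multiplying these lower bounds across the irreducible factors of $P$ immediately gives $|P(\xi)|\geq\min(1,C^n)\,H(P)^{-\tilde{w}}$, which forces $w\leq\tilde{w}$ once $H(P)\to\infty$, and hence $w_n(\xi)\leq\tilde{w}_n(\xi)$ in a single pass. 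Your argument instead identifies, for each bad $P$, one factor $P_i$ lying in the finite exceptional set $\Pi$, pigeonholes a fixed $Q_0\in\Pi$ dividing infinitely many of the $P$'s, cancels $Q_0$ to drop the degree by at least one, and closes the loop via the inductive hypothesis at $n-1$ and the monotonicity $\tilde{w}_{n-1}\leq\tilde{w}_n$. This is a valid ``isolate the worst factor and reduce'' strategy, and the base case and the $\deg_X Q_0=n$ branch are handled correctly; the trade-off is that you need induction on $n$, a pigeonhole step, and a limiting argument to absorb the constant $C$ into the exponent, all of which the paper's direct lower bound makes unnecessary.
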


\begin{proof}
It is immediate that $\tilde{w}_n (\xi )\leq w_n(\xi )$ and $\tilde{w}_n (\xi )\geq 0$.
Therefore, we may assume that $w_n(\xi )>0$ and $\tilde{w}_n (\xi )$ is finite.
For $0<w<w_n(\xi )$, there exist infinitely many $P(X)\in (\lF _q[T])[X]$ of degree at most $n$ such that
\begin{eqnarray}
0<|P(\xi )|\leq H(P)^{-w}.
\end{eqnarray}
We can write $P(X)=A\prod _{i=1}^{k}P_i(X),$ where $A\in \lF _q[T]$ and $P_i(X) \in (\lF _q[T])[X]_{\min }$ for $1\leq i\leq k$.
By the definition, for $\tilde{w}>\tilde{w}_n(\xi )$, there exists a positive number $C$ such that for all $Q(X)\in (\lF _q[T])[X]_{\min }$ of degree at most $n$, 
\begin{eqnarray*}
|Q(\xi )|\geq CH(Q)^{-\tilde{w}}. 
\end{eqnarray*}
Therefore, by Lemma \ref{height}, we obtain
\begin{eqnarray*}
|P(\xi )|\geq \min (1,C^n)H(P)^{-\tilde{w}},
\end{eqnarray*}
which implies $\min (1,C^n)H(P)^w\leq H(P)^{\tilde{w}}$.
Since there exist infinitely many such polynomials $P(X)$, we have $w\leq \tilde{w}$.
This completes the proof.
\end{proof}

The lemma below is an analogue of Lemma A.8 in \cite{Bugeaud1} and Lemma 2.4 in \cite{Pejkovic}.

\begin{lem}\label{insep}
Let $P(X) \in (\lF _q[T])[X]$ be a non-constant irreducible polynomial of degree $n$ and of inseparable degree $f$.
Let $\xi $ be in $\lF _q((T^{-1}))$ and $\alpha $ be a root of $P(X)$ such that $|\xi -\alpha |$ is minimal.
If $n\geq 2f$, then we have
\begin{eqnarray}\label{comp}
|\xi -\alpha |\leq |P(\xi )|^{1/f}H(P)^{n/f^2-2/f}.
\end{eqnarray}
\end{lem}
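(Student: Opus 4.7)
The plan is to exploit the inseparable structure of $P$. Since $P$ is irreducible of inseparable degree $f$ (a power of $p$), I can write $P(X)=Q(X^{f})$ with
$Q(Y)=A\prod_{i=1}^{m}(Y-\beta_{i})\in(\lF_{q}[T])[Y]$ separable irreducible of degree $m=n/f$ (the hypothesis $n\ge 2f$ is exactly $m\ge 2$), and the $\beta_{i}=\alpha_{i}^{f}$ are distinct. By Frobenius $X^{f}-\beta_{i}=(X-\alpha_{i})^{f}$, so $P(X)=A\prod_{i=1}^{m}(X-\alpha_{i})^{f}$, and Lemma \ref{height} gives $H(P)=|A|\prod\max(1,|\alpha_{i}|)^{f}=|A|\prod\max(1,|\beta_{i}|)=H(Q)$.

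Set $\alpha=\alpha_{1}$. Minimality of $|\xi-\alpha|$ combined with the ultrametric identity $\alpha-\alpha_{i}=(\xi-\alpha_{i})-(\xi-\alpha)$ forces $|\alpha-\alpha_{i}|\le|\xi-\alpha_{i}|$ for each $i\ne 1$. Inserting this into
\[
|P(\xi)|=|A|\,|\xi-\alpha|^{f}\prod_{i\ne 1}|\xi-\alpha_{i}|^{f}
\]
and using $(\alpha-\alpha_{i})^{f}=\beta-\beta_{i}$, so that $|A|\prod_{i\ne 1}|\alpha-\alpha_{i}|^{f}=|Q'(\beta)|$, yields
\[
|\xi-\alpha|^{f}\;\le\;\frac{|P(\xi)|}{|Q'(\beta)|}.
\]

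The key and most delicate step is to establish the sharp lower bound $|Q'(\beta)|\ge\max(1,|\beta|)^{m-2}\,H(Q)^{2-m}$. A direct application of the Liouville inequality from Proposition \ref{Liouville inequ1} to $Q'$ at $\beta$ yields only $H(Q)^{2-2m}$, which is too weak by a factor of $H(Q)^{m}$. To gain this factor I would exploit that $\Disc(Q)\in\lF_{q}[T]\setminus\{0\}$, hence $|\Disc(Q)|\ge 1$. Separating the factors involving $\beta$ gives
\[
|\Disc(Q)|\;=\;|A|^{2m-4}\,|Q'(\beta)|^{2}\prod_{1<i<j}|\beta_{i}-\beta_{j}|^{2},
\]
and the remaining differences are controlled by the ultrametric bound $|\beta_{i}-\beta_{j}|\le\max(1,|\beta_{i}|)\max(1,|\beta_{j}|)$. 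Each $\max(1,|\beta_{k}|)$ with $k\ge 2$ enters exactly $m-2$ times, so Lemma \ref{height} collapses this product to $\bigl(H(Q)/(|A|\max(1,|\beta|))\bigr)^{m-2}$; after clearing the powers of $|A|$ the desired lower bound drops out.

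Combining the two displays and using $\max(1,|\beta|)\ge 1$ gives $|\xi-\alpha|^{f}\le|P(\xi)|\,H(P)^{m-2}$, so taking $f$-th roots and rewriting $(m-2)/f=n/f^{2}-2/f$ finishes the proof. Outside of the discriminant bookkeeping, which is the genuine novelty over the naive Liouville bound, everything reduces to Frobenius and ultrametric identities.
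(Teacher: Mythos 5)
Your proof is correct and follows essentially the same route as the paper's: both arguments bound $|\xi-\alpha|$ by relating $|P(\xi)|$ to a discriminant $\geq 1$ in $\lF_q[T]$ and pass to a separable polynomial via Frobenius. The only cosmetic differences are that you handle general $f$ directly by writing $P(X)=Q(X^f)$ rather than first proving the $f=1$ case and then substituting $X^f$, and you bound $\prod_{1<i<j}|\beta_i-\beta_j|$ by the elementary ultrametric estimate $|x-y|\leq\max(1,|x|)\max(1,|y|)$ instead of the paper's Vandermonde-determinant/Hadamard argument, which yields the identical inequality.
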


\begin{proof}
We may assume that $\xi $ and $\alpha $ are distinct.
We first consider the case of $f=1$ .
Write $P(X)=A\prod _{i=1}^{n} (X-\alpha _i)$, where $\alpha =\alpha _1$ and $|\xi -\alpha _1|\leq |\xi-\alpha _2|\leq \ldots \leq |\xi -\alpha _n|$.
Put $Q(X):=A\prod _{i=2}^{n} (X-\alpha _i)$ and $\Delta :=\prod _{i=2}^{n}|\alpha -\alpha _i|$.
Then we have $|\Disc (P)|^{1/2}=\Delta |A||\Disc (Q)|^{1/2}$.
By the definition of discriminant, we obtain
\begin{eqnarray*}
|\Disc (Q)|^{1/2} & = & |A|^{n-2}|\det (\alpha _i ^j)_{2\leq i\leq n, 0\leq j\leq n-2}| \\
& \leq & |A|^{n-2}\prod _{i=2}^{n} \max (1,|\alpha _i|)^{n-2} \\
& = & H(P)^{n-2} \max (1,|\alpha |)^{-n+2}.
\end{eqnarray*}
Since the polynomial $P$ is separable, we get
\begin{eqnarray*}
1 & \leq & |\Disc (P)|^{1/2}
\leq H(P)^{n-2}\max (1,|\alpha |)^{-n+2}|A|\prod_{j=2}^{n}|\xi -\alpha _j|\\
& = & H(P)^{n-2}\max (1,|\alpha |)^{-n+2}|\xi -\alpha |^{-1}|P(\xi )|.
\end{eqnarray*}
Therefore, we have (\ref{comp}).

We next consider the case of $f>1$.
We can write $P(X)=R(X^f)$, where a separable polynomial $R(X) \in (\lF _q[T])[X]$.
Thus, in the same way, it follows that
\begin{eqnarray*}
|\xi ^f -\alpha ^f|\leq |R(\xi ^f)|H(R)^{n/f-2}.
\end{eqnarray*}
Since $H(P)=H(R)$ and $f$ is a power of $p$, we have (\ref{comp}).
\end{proof}

\begin{lem}\label{good}
Let $\xi $ be in $\lF _q((T^{-1}))$ and $n\geq 1$ be an integer.
Then we have
\begin{eqnarray*}
w_1(\xi )=w_1(\xi ^{p^n}).
\end{eqnarray*}
\end{lem}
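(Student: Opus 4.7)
The plan is to distinguish two cases according to whether $\xi \in \lF_q(T)$. If $\xi$ is rational, then $\xi^{p^n}$ is also rational, and since both have degree $1$ over $\lF_q(T)$, Theorem \ref{alg} gives $w_1(\xi) = w_1(\xi^{p^n}) = 0$ immediately. So I may assume $\xi \notin \lF_q(T)$ and work with the infinite continued fraction expansion $\xi = [a_0, a_1, a_2, \ldots]$.

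The core observation I would establish is that $\xi^{p^n} = [a_0^{p^n}, a_1^{p^n}, a_2^{p^n}, \ldots]$. Since $\lF_q((T^{-1}))$ has characteristic $p$, the Frobenius $x \mapsto x^{p^n}$ is a field endomorphism, so iterating the identity $(a+1/\eta)^{p^n} = a^{p^n} + 1/\eta^{p^n}$ yields
\begin{eqnarray*}
\xi ^{p^n} = [a_0,\ldots ,a_m,\xi _{m+1}]^{p^n} = [a_0^{p^n},\ldots ,a_m^{p^n},\xi _{m+1}^{p^n}]
\end{eqnarray*}
for every $m\geq 0$, where $\xi_{m+1}$ is the tail from Lemma \ref{fund}(iv). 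The right-hand side is a legitimate continued fraction because $a_i^{p^n}\in \lF_q[T]$ and $\deg a_i^{p^n} = p^n\deg a_i\geq 1$ for $i\geq 1$, and the tail satisfies $|\xi_{m+1}^{p^n}|\geq q^{p^n}>1$; convergence of this CF as $m\to \infty $ follows from the approximation rate in Lemma \ref{fund}(v).

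Next, an induction using the CF recurrence shows that the convergent denominators $(Q_m)$ of $\xi ^{p^n}$ satisfy $Q_m=q_m^{p^n}$, because Frobenius applied to the recurrence gives
\begin{eqnarray*}
Q_m=a_m^{p^n}Q_{m-1}+Q_{m-2}=a_m^{p^n}q_{m-1}^{p^n}+q_{m-2}^{p^n}=(a_m q_{m-1}+q_{m-2})^{p^n}=q_m^{p^n}.
\end{eqnarray*}
Hence $\deg Q_m=p^n\deg q_m$, and applying Lemma \ref{irr ex} to both $\xi $ and $\xi ^{p^n}$ gives
\begin{eqnarray*}
w_1(\xi ^{p^n})=\limsup _{m\to \infty }\frac{\deg Q_{m+1}}{\deg Q_m}=\limsup _{m\to \infty }\frac{\deg q_{m+1}}{\deg q_m}=w_1(\xi ),
\end{eqnarray*}
which is the desired equality.

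The only point that genuinely requires care is verifying that the Frobenius-twisted sequence $[a_0^{p^n},a_1^{p^n},\ldots ]$ is indeed the CF expansion of $\xi ^{p^n}$, which reduces to the degree condition on the partial quotients together with the convergence estimate from Lemma \ref{fund}(v). After that, the conclusion is a direct consequence of Frobenius being a ring homomorphism in characteristic $p$ combined with Lemma \ref{irr ex}.
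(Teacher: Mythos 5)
Your proposal is correct and takes essentially the same approach as the paper: both reduce to the irrational case via Theorem \ref{alg}, observe that the Frobenius endomorphism gives $\xi^{p^n}=[a_0^{p^n},a_1^{p^n},\ldots]$, deduce that the convergent denominators of $\xi^{p^n}$ have degree $p^n\deg q_m$, and conclude via Lemma \ref{irr ex}. The only cosmetic difference is that you obtain $\deg Q_m=p^n\deg q_m$ by inducting on the three-term recurrence, whereas the paper reads it off directly from Lemma \ref{fund}(iii).
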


\begin{proof}
By Theorem \ref{alg}, we may assume that $\xi $ is not in $\lF _q(T)$.
Therefore, we can write $\xi =[a_0,a_1,\ldots ]$.
Then we have $\xi ^{p^n}=[a_0 ^{p^n},a_1 ^{p^n},\ldots ]$ by the Frobenius endmorphism.
Hence, it follows from Lemma \ref{fund} (iii) and \ref{irr ex} that
\begin{eqnarray*}
w_1(\xi ^{p^n})=\limsup _{k\rightarrow \infty }\frac{\sum _{i=1}^{k+1}\deg a_{i} ^{p^n}}{\sum _{i=1}^{k}\deg a_i ^{p^n}}=\limsup _{k\rightarrow \infty }\frac{p^n \deg q_{k+1}}{p^n \deg q_k}=w_1(\xi ),
\end{eqnarray*}
where $(p_k/q_k)_{k\geq 0}$ is the convergent sequence of $\xi $.
\end{proof}

\begin{prop}\label{main7}
Let $n\geq 1$ be an integer and $\xi $ be in $\lF _q((T^{-1}))$.
Let $k\geq 0$ be an integer such that $p^k\leq n<p^{k+1}$.
Then we have
\begin{eqnarray}
\frac{w_n(\xi )}{p^k}-n+\frac{2}{p^k}-1\leq w_n ^{*}(\xi )\leq w_n(\xi ).
\end{eqnarray} 
Furthermore, if $1\leq n<2p$, then we have
\begin{eqnarray}\label{dif}
w_n(\xi )-n+1\leq w_n ^{*}(\xi )\leq w_n(\xi ).
\end{eqnarray}
\end{prop}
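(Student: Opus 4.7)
I would prove the upper bound $w_n^*(\xi) \leq w_n(\xi)$ first, by a Taylor-type expansion. For $\alpha$ algebraic of degree at most $n$ with minimal polynomial $P = P_\alpha$ of degree $n'$, the identity $P(\xi) = \sum_{j\geq 1} c_j(\alpha)(\xi-\alpha)^j$ holds with $c_j = \sum_{i\geq j} a_i\binom{i}{j}\alpha^{i-j}$ (the $j=0$ term vanishes since $P(\alpha)=0$). The crude bound $|c_j|\leq H(P)\max(1,|\alpha|)^{n'-j}$ together with the ultrametric inequality shows that for $|\xi-\alpha|$ small the maximum is realised at $j=1$; restricting to $\alpha$ close enough to $\xi$ (so that $|\alpha|$ is bounded in terms of $\xi$), this yields $|P(\xi)|\ll_\xi H(\alpha)|\xi-\alpha|$. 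Applied to infinitely many distinct $\alpha$ with $|\xi-\alpha|\leq H(\alpha)^{-w^*-1}$, this produces infinitely many distinct $P$'s (each $P$ has at most $n$ roots) with $|P(\xi)|\ll_\xi H(P)^{-w^*}$, whence $w_n(\xi)\geq w_n^*(\xi)$.

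For the general lower bound, Lemma \ref{ok} lets me restrict to irreducible $P\in(\lF_q[T])[X]_{\min}$. Given $w<w_n(\xi)$, take infinitely many such $P$ with $|P(\xi)|\leq H(P)^{-w}$ and classify them by $(n',f)$, where $n'=\deg P$ and $f=\insep P$; since $f$ is a power of $p$ dividing $n'$, one has $f\leq p^k$, and pigeonhole gives an infinite subsequence with fixed $(n',f)$. If $n'\geq 2f$, Lemma \ref{insep} applied to the closest root $\alpha$ of $P$ gives $|\xi-\alpha|\leq H(\alpha)^{-w/f+n'/f^2-2/f}$, hence $w_n^*(\xi)\geq w/f-n'/f^2+2/f-1$; using $f\leq p^k$ and the crude estimate $n'/f^2\leq n$, this dominates $w/p^k-n+2/p^k-1$. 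Otherwise $n'<2f$, and divisibility $f\mid n'$ forces $n'=f$, so $P(X)=aX^f-b$ is purely inseparable with unique root $\alpha$ of multiplicity $f$. Lemma \ref{height} yields $H(P)=|a|\max(1,|\alpha|)^f$, while $|\xi-\alpha|^f=|P(\xi)|/|a|$ combined with the boundedness of $|\alpha|$ for large $H(P)$ gives $|\xi-\alpha|\ll_\xi H(\alpha)^{-(w+1)/f}$, so $w_n^*(\xi)\geq (w+1)/f-1$. Since $(w+1)/f-1\geq (w+1)/p^k-1\geq w/p^k-n+2/p^k-1$ (using $n\geq 1\geq 1/p^k$), the bound holds in both cases; letting $w\to w_n(\xi)$ concludes.

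For the improvement when $n<2p$, only $k=0$ or $k=1$ occurs. When $k=0$ the general bound already reads $w_n(\xi)-n+1\leq w_n^*(\xi)$. For $k=1$, so $p\leq n<2p$, the inseparable degree $f$ is $1$ or $p$. The separable case $f=1$ gives $w_n^*(\xi)\geq w-n'+1\geq w-n+1$ by Lemma \ref{insep} when $n'\geq 2$ (and the linear case $n'=1$ is handled directly, giving the better $w^*\geq w$). The delicate case is $f=p=n'$, where the previous paragraph only delivers $(w+1)/p-1$, which is too small for large $w$. The resolution is to observe that any purely inseparable $P(X)=aX^p+b$ with $|P(\xi)|\leq H(P)^{-w}$ is essentially a rational approximation to $\xi^p$: $|\xi^p+b/a|\leq H(P)^{-w}/|a|\ll_\xi H(-b/a)^{-w-1}$, so $w_1(\xi^p)\geq w$. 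Lemma \ref{good} then gives $w_1(\xi)=w_1(\xi^p)\geq w$, and the already-proved case $n=1$, $k=0$ of this Proposition (combined with the upper bound from the first paragraph) furnishes $w_1^*(\xi)=w_1(\xi)$; since trivially $w_n^*(\xi)\geq w_1^*(\xi)$, we conclude $w_n^*(\xi)\geq w\geq w-n+1$. The main obstacle is precisely this inseparable-to-rational passage via Lemma \ref{good}: without it, the elementary analysis of purely inseparable polynomials yields a bound that falls short of the desired improvement when $w_n(\xi)$ is large.
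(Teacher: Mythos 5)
Your proof is correct and follows essentially the same route as the paper's: the same factor-out-$(X-\alpha)$ bound for $w_n^*\leq w_n$, the same appeal to Lemma \ref{ok} and the dichotomy $n'\geq 2f$ (via Lemma \ref{insep}) versus $n'=f$ for the general lower bound, and, crucially, the same Frobenius descent via Lemma \ref{good} to dispose of the purely inseparable $P(X)=AX^p+B$ when $p\leq n<2p$. The only cosmetic differences are your phrasing of the upper bound through Hasse--Taylor coefficients rather than the paper's explicit cofactor $Q_\alpha$, and your pigeonhole sorting by $(n',f)$ where the paper simply argues polynomial by polynomial.
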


\begin{rem}
We are able to define analogues of Diophantine exponents $w_n$ and $w_n ^{*}$ for real numbers and $p$-adic numbers (see \cite[Section 3.1, 3.3, and 9.3]{Bugeaud1} for the definition of $w_n$ and $w_n ^{*}$).
It is known that for all $n$, analogues of (\ref{dif}) for real numbers and $p$-adic numbers hold (see \cite{Wirsing,Morrison}).
However, in our framework, we are not able to prove (\ref{dif}) for all $n$.
The main difficulty is the existence of inseparable irreducible polynomials in $(\lF _q[T])[X]$.
Therefore, it seems that Proposition $\ref{main7}$ describes the difference between approximation properties of characteristic zero and that of positive characteristic.
On the other hand, when $n$ is sufficiently small, we prove (\ref{dif}) using continued fraction theory and the Frobenius endomorphism. 
\end{rem}

\begin{proof}
It is immediate that $w_n(\xi ), w_n ^{*}(\xi )\geq 0$.
We first show that $w_n ^{*}(\xi )\leq w_n(\xi )$.
We may assume that $w_n ^{*}(\xi )>0$.
For $0<w^{*}<w_n ^{*}(\xi )$, there exist infinitely many $\alpha \in \overline{\lF _q(T)}$ of degree at most $n$ such that
\begin{eqnarray*}
0<|\xi -\alpha |\leq H(\alpha )^{-w^{*}-1}.
\end{eqnarray*}
Let $P_{\alpha }(X)=\sum_{i=0}^{d}a_i X^i$ be the minimal polynomial of $\alpha $, where $d=\deg \alpha $.
Put 
\begin{eqnarray*}
Q_{\alpha }(X):=a_d X^{d-1}+(a_d \alpha +a_{d-1})X^{d-2}+(a_d \alpha ^2+a_{d-1} \alpha +a_{d-2})X^{d-3}\\
+\cdots +(a_d \alpha ^{d-1}+a_{d-1} \alpha ^{d-2}+\cdots +a_1).
\end{eqnarray*}
Then we have $P_{\alpha }(X)=(X-\alpha )Q_{\alpha }(X)$.
Since $\max (1,|\alpha |)=\max (1,|\xi |)$, we obtain $|Q_{\alpha }(\xi )|\leq H(P_{\alpha })\max (1,|\xi |)^{2n}$.
Hence, it follows that
\begin{eqnarray*}
|P_{\alpha }(\xi )|\leq H(\alpha )^{-w^*} \max (1,|\xi |)^{2n},
\end{eqnarray*}
which gives $w^{*}\leq w_n(\xi)$.
Consequently, we have $w_n ^{*}(\xi )\leq w_n(\xi )$.

Our next claim is that $w_n(\xi )/p^k-n+2/p^k-1\leq w_n ^{*}(\xi )$.
We may assume that $w_n(\xi )>0$.
For $0<w<w_n(\xi )$, there exist infinitely many $P(X) \in (\lF _q[T])[X]_{\min }$ of degree at most $n$ such that
\begin{eqnarray*}
0<|P(\xi )|\leq H(P)^{-w}
\end{eqnarray*}
by Lemma \ref{ok}.
Let $m$ denote the degree of $P$ and $f$ denote the inseparable degree of $P$.
We first consider the case of $m\geq 2f$.
By Lemma \ref{insep}, there exists $\alpha $ of root of $P$ such that
\begin{eqnarray*}
|\xi -\alpha |\leq H(\alpha )^{-w/f+m/f^2-2/f}\leq H(\alpha )^{-w/p^k+n-2/p^k}.
\end{eqnarray*}
Now, assume that $m<2f$.
Then we have $m=f$ by $f|m$.
Therefore, we can write $P(X)=A(X^m-\alpha ^m)$, where $A\in \lF_q[T]$ and $\alpha \in \overline{\lF_q(T)}$.
Thus, we get $|\xi -\alpha |<|A|^{-1/f}H(\alpha )^{-w/f}$.
Since $\max (1,|\xi |)=\max (1,|\alpha |)$, we have
\begin{eqnarray*}
|\xi -\alpha |\leq \max (1,|\xi |)H(\alpha )^{-w/f-1/f}
\leq \max (1,|\xi |)H(\alpha )^{-w/p^k+n-2/p^k}
\end{eqnarray*}
by Lemma \ref{height}.
This is our claim.

Finally, we assume $1\leq n<2p$ and show (\ref{dif}).
Let $0<w<w_n(\xi )$.
If there exist infinitely many separable polynomials $P(X) \in (\lF _q[T])[X]_{\min }$ of degree at most $n$ such that
\begin{eqnarray*}
0<|P(\xi )|\leq H(P)^{-w},
\end{eqnarray*}
then we have $w-n+1\leq w_n ^{*}(\xi )$ as in the same line of the above proof.
Therefore, we may assume that there exist infinitely many inseparable polynomials $P(X) \in (\lF _q[T])[X]_{\min }$ of degree at most $n$ such that
\begin{eqnarray*}
0<|P(\xi )|\leq H(P)^{-w}.
\end{eqnarray*}
Then we can write such polynomials $P(X)=AX^p+B,$ where $A,B \in \lF _q[T]$.
By Lemma \ref{good} and the definition of $w_n$, we have $w\leq w_1(\xi )$.
Therefore, we obtain $w-n+1\leq w_n ^{*}(\xi )$ by $w_1(\xi )=w_1 ^{*}(\xi )$.
Hence, we have (\ref{dif}).
\end{proof}

It follows from Proposition \ref{main7} that for an integer $n\geq 1$ and $\xi \in \lF _q((T^{-1}))$
\begin{itemize}
 \item $w_n(\xi )$ is finite if and only if $w_n ^{*}(\xi )$ is finite,
 \item if $w(\xi )$ is finite, then $w^{*}(\xi )$ is finite.
\end{itemize}
Consequently, we obtain
\begin{itemize}
 \item $\xi $ is a $U_n$-number if and only if it is a $U_n ^{*}$-number,
 \item if $\xi $ is an $S$-number, then it is an $S^{*}$-number.
\end{itemize}

We address the following questions in the last of this section.

\begin{prob}
Does (\ref{dif}) hold for all $n\geq 1$ and $\xi \in \lF _q((T^{-1}))$?
\end{prob}

\begin{prob}\label{prob1}
Does Mahler's classification coincide Koksma's classification?
\end{prob}

Note that an analogue of Problem \ref{prob1} for real numbers and $p$-adic numbers holds.
The detail is found in \cite[Section 3.4 and 9.3]{Bugeaud1}, \cite[Chapter 6]{Pejkovic}, and \cite{Schmidt1}.

\section{Applications of Liouville inequalities}\label{Applications sec}

The following proposition is an analogue of  Lemma 7.3 in \cite{Bugeaud2} and Lemma 5 in \cite{Bugeaud3}.

\begin{prop}\label{Mahler and Koksma}
Let $\xi $ be in $\lF_q((T^{-1}))$ and $c_1,c_2,c_3,c_4,\theta ,\rho ,\delta $ be positive numbers.
Let $\varepsilon $ be a non-negative number.
Assume that there exists a sequence of distinct terms $(\alpha _j)_{j\geq 1}$ with $\alpha _j \in \overline{\lF _q(T)}$ is quadratic for $j\geq 1$ such that for all $j\geq 1$
\begin{gather*}
\frac{c_1}{H(\alpha _j)^{1+\rho }} \leq |\xi -\alpha _j|\leq \frac{c_2}{H(\alpha _j)^{1+\delta }},\\
H(\alpha _j)\leq  H(\alpha _{j+1})\leq c_3 H(\alpha _j)^\theta ,\\
0<|\alpha _j-\alpha _j '|\leq \frac{c_4}{H(\alpha _j)^\varepsilon }.
\end{gather*}
If $(\rho -1)(\delta -1+\varepsilon )\geq 2\theta (2-\varepsilon )$, then we have $\delta \leq w_2 ^{*}(\xi )\leq \rho $.
Furthermore, if $(\delta -1)(\delta -1+\varepsilon )\geq 2\theta (2-\varepsilon )$, then we have
\begin{eqnarray*}
\delta \leq w_2 ^{*}(\xi )\leq \rho ,\quad w_2(\xi )\geq w_2 ^{*}(\xi )+\varepsilon .
\end{eqnarray*}
Finally, assume that there exists a non-negative number $\chi $ such that
\begin{eqnarray*}
\limsup_{j\rightarrow \infty} \frac{-\log |\alpha _j-\alpha _j '|}{\log H(\alpha _j)}\leq \chi .
\end{eqnarray*}
If $(\delta -2+\chi )(\delta -1+\varepsilon )\geq 2\theta (2-\varepsilon )$ when $p\neq 2$ and $(\delta -4+\chi )(\delta -1+\varepsilon )\geq 4\theta (2-\varepsilon )$ when $p=2$, then we have
\begin{eqnarray*}
\delta \leq w_2 ^{*}(\xi )\leq \rho ,\quad \varepsilon \leq  w_2(\xi )-w_2 ^{*}(\xi ) \leq \chi .
\end{eqnarray*}
\end{prop}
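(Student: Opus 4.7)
The plan is to establish the lower and upper bounds in each assertion by comparing arbitrary quadratic approximants of $\xi$ with the given family $(\alpha_j)$, primarily through the Liouville inequality of Proposition \ref{Liouvilleinequ3}.

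For the lower bounds, the inequality $\delta \leq w_2^{*}(\xi)$ follows at once from the hypothesis $|\xi - \alpha_j| \leq c_2 H(\alpha_j)^{-1-\delta}$, since the distinctness of the $\alpha_j$ combined with $H(\alpha_j) \leq H(\alpha_{j+1})$ and the lower bound $|\xi - \alpha_j| \geq c_1 H(\alpha_j)^{-1-\rho}$ forces $H(\alpha_j) \to \infty$. For the lower bound $w_2(\xi) \geq w_2^{*}(\xi) + \varepsilon$ in the second and third parts, I would estimate $|P_{\alpha_j}(\xi)|$ directly: writing $P_{\alpha_j}(X) = A(X - \alpha_j)(X - \alpha_j')$, Lemma \ref{Galois conj} gives $|\alpha_j - \alpha_j'| \geq H(\alpha_j)^{-1}$, which eventually exceeds $|\xi - \alpha_j|$, so the ultrametric equality $|\xi - \alpha_j'| = |\alpha_j - \alpha_j'| \leq c_4 H(\alpha_j)^{-\varepsilon}$ holds. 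Combined with $|A| \leq H(\alpha_j) = H(P_{\alpha_j})$, this yields
\begin{align*}
|P_{\alpha_j}(\xi)| \leq H(\alpha_j) \cdot c_2 H(\alpha_j)^{-1-\delta} \cdot c_4 H(\alpha_j)^{-\varepsilon} = c_2 c_4 H(\alpha_j)^{-\delta-\varepsilon},
\end{align*}
so $w_2(\xi) \geq \delta + \varepsilon$. Coupled with the upper bound $w_2^{*}(\xi) \leq \delta$ that the upper-bound argument below produces from the stronger hypothesis $(\delta - 1)(\delta - 1 + \varepsilon) \geq 2\theta(2-\varepsilon)$, this gives $w_2(\xi) - w_2^{*}(\xi) \geq \varepsilon$.

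For the upper bound $w_2^{*}(\xi) \leq \rho$, suppose for contradiction that $|\xi - \beta| \leq H(\beta)^{-1-w^{*}}$ for infinitely many quadratic $\beta$ with $w^{*} > \rho$. Any $\beta$ among the $\alpha_j$ directly contradicts $|\xi - \alpha_j| \geq c_1 H(\alpha_j)^{-1-\rho}$, so assume $P_\beta \neq P_{\alpha_j}$ for all $j$. Choose $j$ with $H(\alpha_j) \leq H(\beta)^{\lambda} < H(\alpha_{j+1})$, where $\lambda$ is slightly larger than $2\theta/(\delta - 1 + \varepsilon)$. Proposition \ref{Liouvilleinequ3}, together with $|\alpha_j - \alpha_j'|^{-1} \geq c_4^{-1} H(\alpha_j)^{\varepsilon}$, yields $|\alpha_j - \beta| \geq c_4^{-1} H(\alpha_j)^{\varepsilon - 2} H(\beta)^{-2}$, while the ultrametric triangle inequality gives $|\alpha_j - \beta| \leq \max(|\xi - \alpha_j|, |\xi - \beta|)$. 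The choice of $\lambda$ together with the growth constraint $H(\alpha_{j+1}) \leq c_3 H(\alpha_j)^\theta$ precludes the maximum from being $|\xi - \alpha_j|$ for large $j$; the remaining case forces $H(\beta)^{w^{*} - 1} \leq c_4 H(\alpha_j)^{2 - \varepsilon} \leq c_4 H(\beta)^{\lambda(2 - \varepsilon)}$, whence $w^{*} - 1 \leq \lambda(2 - \varepsilon)$. Letting $\lambda \to 2\theta/(\delta - 1 + \varepsilon)^{+}$ gives $w^{*} \leq 2\theta(2-\varepsilon)/(\delta - 1 + \varepsilon) + 1 \leq \rho$, contradicting $w^{*} > \rho$.

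For the upper bound $w_2(\xi) - w_2^{*}(\xi) \leq \chi$ in the third part, the plan is to reverse this comparison: given $P$ of degree at most $2$ with $|P(\xi)| \leq H(P)^{-w}$, factor $P$ and take the root $\beta$ closest to $\xi$. A lower bound of the form $|\beta - \beta'| \geq H(\beta)^{-\chi - o(1)}$ will follow by comparing $\beta$ with a nearby $\alpha_j$ using Proposition \ref{Liouville} and the new hypothesis on $|\alpha_j - \alpha_j'|$. Together with $|P(\xi)| = |A|\cdot|\xi - \beta|\cdot|\xi - \beta'|$ and the ultrametric identity $|\xi - \beta'| = |\beta - \beta'|$, this will translate into $|\xi - \beta| \leq H(\beta)^{-1-w+\chi + o(1)}$, hence $w_2^{*}(\xi) \geq w_2(\xi) - \chi$. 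The main obstacle I anticipate is handling inseparable quadratic polynomials in characteristic $2$: when $P = A(X - \beta)^2$ or $P(X) = AX^2 + B$, the argument based on separating $\beta$ from $\beta'$ fails, and one must invoke Lemma \ref{Liouville inequ2} after passing through the Frobenius endomorphism, which costs an extra factor in the exponent and accounts for the replacement of $2\theta(2-\varepsilon)$ by $4\theta(2-\varepsilon)$ in the hypothesis when $p = 2$.
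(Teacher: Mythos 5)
Your treatment of the first part (the bound $\delta\leq w_2^{*}(\xi)\leq\rho$) follows the paper's argument, matching the choice of $j_0$ and the use of Proposition~\ref{Liouvilleinequ3} with Lemma~\ref{Galois conj}. But there are two genuine gaps in the remaining parts.

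\textbf{Gap in the second assertion.} You derive $w_2(\xi)\geq\delta+\varepsilon$ from $|P_{\alpha_j}(\xi)|\leq c_2 c_4 H(\alpha_j)^{-\delta-\varepsilon}$ (this step is fine), and then assert that the gap argument under $(\delta-1)(\delta-1+\varepsilon)\geq 2\theta(2-\varepsilon)$ yields $w_2^{*}(\xi)\leq\delta$. It does not. The Liouville comparison lower-bounds $|\xi-\alpha|$ only for $\alpha\notin\{\alpha_j\}$; for $\alpha_j$ itself all you know is the hypothesis $|\xi-\alpha_j|\geq c_1 H(\alpha_j)^{-1-\rho}$, so the $\alpha_j$'s can still force $w_2^{*}(\xi)$ anywhere up to $\rho$. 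Thus $w_2(\xi)\geq\delta+\varepsilon$ together with $w_2^{*}(\xi)\leq\rho$ does not give $w_2(\xi)-w_2^{*}(\xi)\geq\varepsilon$ unless $\rho=\delta$. The paper instead observes that under the stronger hypothesis the sequence $(\alpha_j)$ is the complete sequence of best quadratic approximations, so that $w_2^{*}(\xi)=\limsup_j\bigl(-\log|\xi-\alpha_j|/\log H(\alpha_j)\bigr)-1$, and then pairs \emph{each} $\alpha_j$ with its minimal polynomial $P_j$ via $|P_j(\xi)|\leq\max(c_2,c_4)H(\alpha_j)^{1-\varepsilon}|\xi-\alpha_j|$ to transfer that $\limsup$ to $w_2(\xi)$ with the extra $\varepsilon$. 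Your plan omits the best-approximation identity, which is precisely what makes the $\varepsilon$-shift attach to $w_2^{*}(\xi)$ rather than merely to $\delta$.

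\textbf{Gap in the third assertion.} You propose to show $|\beta-\beta'|\geq H(\beta)^{-\chi-o(1)}$ for the root $\beta$ of $P$ closest to $\xi$, by comparing $\beta$ to a nearby $\alpha_j$. There is no reason for this to hold: the hypothesis controls $|\alpha_j-\alpha_j'|$, not the conjugate separation of an arbitrary close approximant $\beta$, and Lemma~\ref{Galois conj} only gives the much weaker $|\beta-\beta'|\geq H(\beta)^{-1}$. The paper sidesteps this entirely. For $P\notin\{P_j\}$ and $p\neq 2$ it uses the ultrametric inequality $|\xi-\alpha'|\geq|\alpha-\alpha'|$ (for $\alpha$ the closest root) together with the identity $|A(\alpha-\alpha')|=|\Disc(P)|^{1/2}\geq 1$, since $\Disc(P)$ is a nonzero element of $\lF_q[T]$, to get $|P(\xi)|\geq|\xi-\alpha|$; then the already-established lower bound for $|\xi-\alpha|$ (the analogue of~(\ref{alg lower})) finishes. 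For $P_j$ itself the bound $\limsup_j\bigl(-\log|P_j(\xi)|/\log H(P_j)\bigr)\leq w_2^{*}(\xi)+\chi$ is obtained from $|P_j(\xi)|\gg H(P_j)|\xi-\alpha_j||\alpha_j-\alpha_j'|$, where the $\chi$ enters via the given $\limsup$ on $|\alpha_j-\alpha_j'|$. Your instinct that inseparable polynomials in characteristic $2$ require a separate argument and account for the $4\theta(2-\varepsilon)$ factor is correct, but that case is handled directly via $|P(\xi)|\geq|A||\xi-\alpha|^2$, not via Frobenius and Lemma~\ref{Liouville inequ2}.
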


\begin{proof}
Assume that $(\rho -1)(\delta -1+\varepsilon )\geq 2\theta (2-\varepsilon )$.
By the assumption, we have $\theta \geq 1, \rho >1$, and $\delta +\varepsilon >1$.
Let $\alpha \in \overline{\lF _q(T)}$ be an algebraic number of degree at most two with sufficiently large height and $\alpha \notin \{\alpha _j\mid j\geq 1 \} $.
We define an integer $j_0\geq 1$ by $H(\alpha _{j_0})\leq c_3 \{ (c_2 c_4)^{\frac{1}{2}}H(\alpha )\} ^{\frac{2\theta }{\delta +\varepsilon -1}}<H(\alpha _{j_0 +1})$.
Then, by the assumption, we have
\begin{align*}
H(\alpha )<c_3 ^{-\frac{\delta +\varepsilon -1}{2\theta }}(c_2 c_4)^{-\frac{1}{2}} H(\alpha _{j_0+1})^{\frac{\delta +\varepsilon -1}{2\theta }}
\leq (c_2 c_4)^{-\frac{1}{2}} H(\alpha _{j_0})^{\frac{\delta +\varepsilon -1}{2}}.
\end{align*}
Hence, it follows from Proposition \ref{Liouville}, \ref{Liouvilleinequ3}, and Lemma \ref{Galois conj} that
\begin{eqnarray*}
|\alpha -\alpha _{j_0}| & \geq & |\alpha _{j_0}-\alpha _{j_0} '|^{-1}H(\alpha _{j_0})^{-2}H(\alpha )^{-2}\\
& > & c_2 H(\alpha _{j_0})^{-\delta -1} \geq |\xi -\alpha _{j_0}|.
\end{eqnarray*}
Therefore, we obtain
\begin{eqnarray}
|\xi -\alpha | & = & |\alpha -\alpha _{j_0}|>c_4^{-1}H(\alpha _{j_0})^{-2+\varepsilon }H(\alpha )^{-2} \nonumber \\
\label{alg lower} & \geq & c_2^{-\frac{\theta (2-\varepsilon )}{\delta +\varepsilon -1}}c_3 ^{-2+\varepsilon }c_4 ^{-1-\frac{2\theta (2-\varepsilon )}{\delta +\varepsilon -1}} H(\alpha )^{-2-\frac{2\theta (2-\varepsilon )}{\delta +\varepsilon -1}}.
\end{eqnarray}
By the assumption, we have 
\begin{align*}
\delta \leq w_2 ^{*}(\xi )\leq \max \left( \rho ,1+\frac{2\theta (2-\varepsilon )}{\delta +\varepsilon -1} \right) =\rho .
\end{align*}

We next assume that $(\delta -1)(\delta -1+\varepsilon )\geq 2\theta (2-\varepsilon )$.
By (\ref{alg lower}), it follows that
\begin{align*}
|\xi -\alpha | \geq c_2^{-\frac{\theta (2-\varepsilon )}{\delta +\varepsilon -1}}c_3 ^{-2+\varepsilon }c_4 ^{-1-\frac{2\theta (2-\varepsilon )}{\delta +\varepsilon -1}} H(\alpha )^{-\delta -1}.
\end{align*}
Therefore,  the sequence $(\alpha _j)_{j\geq 1}$ is the best algebraic approximation to $\xi $ of degree at most two, that is,
\begin{align*}
w_2 ^{*}(\xi )= \limsup _{j\rightarrow \infty } \frac{-\log |\xi -\alpha _j|}{\log H(\alpha _j)} -1.
\end{align*}
We denote by $P_j (X)=A_j (X-\alpha _j)(X-\alpha _j ')$ the minimal polynomial of $\alpha _j$.
By the assumption, we have
\begin{align*}
|P_j (\xi )|\leq \max (c_2, c_4)H(\alpha _j)^{-\varepsilon +1} |\xi -\alpha _j|.
\end{align*}
Therefore, we obtain
\begin{eqnarray*}
w_2 ^{*}(\xi )+\varepsilon \leq \limsup _{n\rightarrow \infty }\frac{-\log |P_j (\xi )|}{H(P_j)} \leq w_2(\xi ).
\end{eqnarray*}

Finally, assume that
\begin{eqnarray*}
\limsup_{j\rightarrow \infty} \frac{\log |\alpha _j-\alpha _j '|}{\log H(\alpha _j)}=-\chi .
\end{eqnarray*}
We also assume that $(\delta -2+\chi )(\delta -1+\varepsilon )\geq 2\theta (2-\varepsilon )$ when $p\neq 2$ and $(\delta -4+\chi )(\delta -1+\varepsilon )\geq 4\theta (2-\varepsilon )$ when $p=2$.
Since $|\xi -\alpha _j|\leq c_2$ and $|\alpha _j-\alpha _j '|\leq c_4$, we have 
\begin{eqnarray*}
\max (1,|\alpha _j|), \max (1,|\alpha _j '|)\leq \max (1,c_2, c_4, |\xi |),
\end{eqnarray*}
which implies $H(P_j)\leq |A_j|\max (1,c_2, c_4, |\xi |)^2$.
By the assumption, we get $|\alpha _j-\alpha _j '|<|\xi -\alpha _j '|$ for sufficiently large $j$.
Therefore, we obtain for sufficiently large $j$
\begin{align*}
|P_j(\xi )|\geq \max (1,c_2, c_4, |\xi |)^{-2} H(P_j)|\xi -\alpha _j||\alpha _j -\alpha _j '|.
\end{align*}
Taking a logarithm and a limit superior, we have
\begin{align*}
\limsup _{j\rightarrow \infty } \frac{-\log |P_j(\xi )|}{\log H(P_j)} \leq w_2 ^{*}(\xi )+\chi .
\end{align*}

Let $P(X) \in (\lF_q[T])[X]_{\min }$ be a polynomial of degree at most two with sufficiently large height such that $P(\alpha _j)\neq 0$ for all $j\geq 1$.
When $\deg _X P=1$, we can write $P(X)=A(X-\alpha )$ and have
\begin{align*}
|P(\xi )|=|A||\xi -\alpha |\geq  c_2^{-\frac{\theta (2-\varepsilon )}{\delta +\varepsilon -1}}c_3 ^{-2+\varepsilon }c_4 ^{-1-\frac{2\theta (2-\varepsilon )}{\delta +\varepsilon -1}} H(P)^{-\delta -\chi }
\end{align*}
by (\ref{alg lower}).
When $\deg _X P=2$, we can write $P(X)=A(X-\alpha )(X-\alpha ')$ and assume that $|\xi -\alpha |\leq |\xi -\alpha '|$.
we first consider the case of $p\neq 2$.
Then we obtain
\begin{eqnarray}
|P(\xi )| & \geq & |A(\alpha -\alpha ')||\xi -\alpha | =|\Disc (P)|^{1/2}|\xi -\alpha | \nonumber \\
\label{before} & \geq & c_2^{-\frac{\theta (2-\varepsilon )}{\delta +\varepsilon -1}}c_3 ^{-2+\varepsilon }c_4 ^{-1-\frac{2\theta (2-\varepsilon )}{\delta +\varepsilon -1}} H(P)^{-\delta -\chi }
\end{eqnarray}
by (\ref{alg lower}).
We next consider the case of $p=2$.
If $\alpha \neq \alpha '$, then we have (\ref{before}) and if otherwise, then we have 
\begin{eqnarray*}
|P(\xi )| & \geq & |A||\xi -\alpha |^2
\geq c_2^{-\frac{2\theta (2-\varepsilon )}{\delta +\varepsilon -1}}c_3 ^{-4+2\varepsilon }c_4 ^{-2-\frac{4\theta (2-\varepsilon )}{\delta +\varepsilon -1}} H(P)^{-4-\frac{4\theta (2-\varepsilon )}{\delta +\varepsilon -1}} \\
& \geq & c_2^{-\frac{2\theta (2-\varepsilon )}{\delta +\varepsilon -1}}c_3 ^{-4+2\varepsilon }c_4 ^{-2-\frac{4\theta (2-\varepsilon )}{\delta +\varepsilon -1}} H(P)^{-\delta -\chi }
\end{eqnarray*}
by (\ref{alg lower}).
Thus, it follows that $w_2(\xi )\leq w_2 ^{*}(\xi )+\chi $ by Lemma \ref{ok}.
\end{proof}

The following proposition is an analogue of Lemma 7.2 in \cite{Bugeaud2}.

\begin{prop}\label{Mahler and Koksma2}
Let $\xi $ be in $\lF_q((T^{-1}))$.
Let $c_0,c_1,c_2,c_3, \theta ,\rho ,\delta $ be positive numbers and $(\beta _j)_{j\geq 1}$ be a sequence of positive integers such that $\beta _j<\beta _{j+1}\leq c_0 \beta _j ^\theta $ for all $j\geq 1$.
Assume that there exists a sequence of distinct terms $(\alpha _j)_{j\geq 1}$ with $\alpha _j \in \overline{\lF _q(T)}$ is quadratic for $j\geq 1$ such that for all $j\geq 1$
\begin{gather*}
\frac{c_1}{\beta _j ^{2+\rho }} \leq |\xi -\alpha _j|\leq \frac{c_2 \max (1, |\alpha _j -\alpha _j '|^{-1})}{\beta _j ^{2+\delta }},\\
H(\alpha _j)\leq c_3 \beta _j, \quad \alpha _j \neq \alpha _j '.
\end{gather*}
Then we have
\begin{eqnarray*}
1+\delta \leq w_2 ^{*} (\xi )\leq (2+\rho )\frac{2\theta }{\delta }-1.
\end{eqnarray*}
\end{prop}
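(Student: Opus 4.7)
The plan is to establish the two inequalities separately. For the lower bound $w_2^*(\xi) \geq 1+\delta$, I would start from the hypothesis $|\xi - \alpha_j| \leq c_2 \max(1, |\alpha_j - \alpha_j'|^{-1}) \beta_j^{-(2+\delta)}$ and substitute $\beta_j \geq H(\alpha_j)/c_3$ to obtain
\begin{equation*}
|\xi - \alpha_j| \leq c_2 c_3^{2+\delta} \max(1, |\alpha_j - \alpha_j'|^{-1}) H(\alpha_j)^{-(2+\delta)}.
\end{equation*}
When $|\alpha_j - \alpha_j'| \geq 1$, the $\max$ factor equals $1$ and the inequality $|\xi - \alpha_j| \leq C H(\alpha_j)^{-(2+\delta)}$ is immediate, directly giving $w_2^*(\xi) \geq 1+\delta$. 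When $|\alpha_j - \alpha_j'| < 1$, one invokes Lemma~\ref{Galois conj} to bound $|\alpha_j - \alpha_j'|^{-1}$ by $H(\alpha_j)$, and the refinement proceeds through the minimal polynomial $P_{\alpha_j}$: combining the identity $|P_{\alpha_j}(\xi)| = |A_j| |\xi - \alpha_j| |\xi - \alpha_j'|$, the inequality $|A_j| |\alpha_j - \alpha_j'| \leq H(\alpha_j)$ (equivalent to $|\mathrm{Disc}(P_{\alpha_j})|^{1/2} \leq H(P_{\alpha_j})$), and the ultrametric estimate $|\xi - \alpha_j'| \leq \max(|\xi - \alpha_j|, |\alpha_j - \alpha_j'|)$, one extracts an upper bound on $|P_{\alpha_j}(\xi)|$ which, via Proposition~\ref{main7}, transfers into the desired lower bound on $w_2^*(\xi)$.

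For the upper bound $w_2^*(\xi) \leq (2+\rho)\cdot 2\theta/\delta - 1$, let $\alpha \in \overline{\lF_q(T)}$ be an algebraic number of degree at most two with $\alpha \notin \{\alpha_j : j \geq 1\}$ and with $H(\alpha)$ sufficiently large. The key step is to choose $j$ to be the smallest positive integer satisfying $\beta_j > C H(\alpha)^{2/\delta}$ for a suitable explicit constant $C$ depending on $c_2, c_3$. By minimality and the growth condition $\beta_{j+1} \leq c_0 \beta_j^\theta$, one then has $\beta_j \leq c_0 C^\theta H(\alpha)^{2\theta/\delta}$. Proposition~\ref{Liouvilleinequ3}, applicable since $\alpha_j \neq \alpha_j'$ and $P_{\alpha_j} \neq P_\alpha$, gives
\begin{equation*}
|\alpha - \alpha_j| \geq \max(1, |\alpha_j - \alpha_j'|^{-1}) H(\alpha_j)^{-2} H(\alpha)^{-2},
\end{equation*}
and the constant $C$ is arranged precisely so that this Liouville lower bound strictly exceeds the hypothetical upper bound on $|\xi - \alpha_j|$. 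The ultrametric triangle inequality then forces $|\xi - \alpha| = |\alpha - \alpha_j|$; in particular $|\xi - \alpha| > |\xi - \alpha_j|$. The crucial final step is to use the \emph{lower} bound $|\xi - \alpha_j| \geq c_1 \beta_j^{-(2+\rho)}$, which yields $|\xi - \alpha| > c_1 \beta_j^{-(2+\rho)} \geq c' H(\alpha)^{-(2+\rho)\cdot 2\theta/\delta}$ after substituting the upper bound on $\beta_j$. This translates directly into the claimed bound on $w_2^*(\xi)$.

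The main obstacle is the lower bound: the factor $\max(1, |\alpha_j - \alpha_j'|^{-1})$ can potentially erode the exponent $2+\delta$ unless compensated by the case analysis above, and verifying the transfer through Proposition~\ref{main7} requires some care in tracking the regime of $|\alpha_j - \alpha_j'|$. The upper bound is more mechanical once the correct choice of $j$ is identified, with the essential observation being that the $\rho$-dependence in the final exponent comes from the \emph{lower} bound on $|\xi - \alpha_j|$, not from the Liouville estimate itself. Degree-one $\alpha$ and inseparable quadratic $\alpha$, should they appear, are handled analogously by replacing Proposition~\ref{Liouvilleinequ3} with the more general Proposition~\ref{Liouville}.
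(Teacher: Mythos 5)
Your approach to the upper bound is essentially the paper's argument, but with one genuine gap. The paper defines $j_0$ by $\beta_{j_0}\leq c_0 c_2^{\theta/\delta}(c_3H(\alpha))^{2\theta/\delta}<\beta_{j_0+1}$ and then splits into the cases $\alpha=\alpha_{j_0}$ and $\alpha\neq\alpha_{j_0}$. In the degenerate case $\alpha=\alpha_{j_0}$ it uses the hypothesis $|\xi-\alpha_{j_0}|\geq c_1\beta_{j_0}^{-2-\rho}$ directly; in the other case it uses Proposition~\ref{Liouvilleinequ3} plus the ultrametric to get $|\xi-\alpha|\gg H(\alpha)^{-2-4\theta/\delta}$, and takes the max of the two exponents. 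You instead \emph{exclude} all $\alpha\in\{\alpha_k:k\geq1\}$ from the outset. That is not legitimate as stated: $w_2^*(\xi)$ is a limsup over every quadratic $\alpha$, and the $\alpha_k$'s are precisely the good approximators, so a bound that omits them proves nothing about $w_2^*(\xi)$. The fix is exactly the paper's case analysis (and is easy): if $\alpha=\alpha_k$ with $k$ different from the chosen $j$ then $P_{\alpha}\neq P_{\alpha_j}$ and Proposition~\ref{Liouvilleinequ3} still applies verbatim, and if $\alpha=\alpha_j$ you fall back on the hypothesis $|\xi-\alpha_j|\geq c_1\beta_j^{-2-\rho}$, which — after substituting $\beta_j\ll H(\alpha)^{2\theta/\delta}$ — gives the very same exponent $(2+\rho)\frac{2\theta}{\delta}$. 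Once that is added, your route to the upper bound is correct. Note one stylistic divergence: you feed the lower bound $|\xi-\alpha_j|\geq c_1\beta_j^{-2-\rho}$ into the ultrametric conclusion $|\xi-\alpha|=|\alpha-\alpha_j|>|\xi-\alpha_j|$ even in the nondegenerate case, getting $(2+\rho)\frac{2\theta}{\delta}-1$ uniformly; the paper obtains the sharper exponent $1+\frac{4\theta}{\delta}$ there and only the degenerate case produces the $\rho$-dependence. Both give the same final $\max$, so this is a legitimate simplification, and your observation that the $\rho$-dependence enters only through $|\xi-\alpha_j|$ being bounded below is the right structural insight.

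The lower bound $1+\delta\leq w_2^*(\xi)$ is where your proposal genuinely falls short. The route you sketch — bound $|P_{\alpha_j}(\xi)|=|A_j|\,|\xi-\alpha_j|\,|\alpha_j-\alpha_j'|$, cancel the $\max(1,|\alpha_j-\alpha_j'|^{-1})$ factor, and transfer to $w_2^*$ via Proposition~\ref{main7} — produces $|P_{\alpha_j}(\xi)|\ll H(P_{\alpha_j})^{-(1+\delta)}$, hence $w_2(\xi)\geq 1+\delta$, and then Proposition~\ref{main7} (in the form $w_2^*(\xi)\geq w_2(\xi)-1$) gives only $w_2^*(\xi)\geq\delta$. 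That loses a full unit and does \emph{not} establish the claimed $1+\delta$. Your remark that ``verifying the transfer\ldots requires some care'' is, in effect, an acknowledgement that the step is unfinished: the transfer, as set up, cannot recover the lost $1$. (The paper's own proof is silent on this inequality — it simply writes ``By the assumption, we have $1+\delta\leq w_2^*(\xi)$'' with no argument — so the difficulty is real; but a proposal that claims to prove the Proposition must address it, and the mechanism you describe does not close the gap when $|\alpha_j-\alpha_j'|<1$, which is precisely the regime that occurs in the application to Theorem~\ref{main1}.)
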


\begin{proof}
Let $\alpha \in \overline{\lF _q(T)}$ be an algebraic number of degree at most two with sufficiently large height.
We define an integer $j_0\geq 1$ by $\beta _{j_0} \leq c_0 c_2^{\frac{\theta }{\delta }}(c_3 H(\alpha ))^{\frac{2\theta }{\delta }} <\beta _{j_0+1}$.
We first consider the case of $\alpha =\alpha _{j_0}$.
By the assumption, we have
\begin{eqnarray*}
|\xi -\alpha |\geq c_1 \beta _{j_0} ^{-2-\rho } \geq c_0 ^{-2-\rho }c_1 c_2 ^{-(2+\rho )\frac{\theta }{\delta }}c_3 ^{-(2+\rho )\frac{2\theta }{\delta }}H(\alpha )^{-(2+\rho )\frac{2\theta }{\delta }}.
\end{eqnarray*}

We next consider the other case.
Then, by the assumption, we have 
\begin{eqnarray*}
H(\alpha ) < c_2 ^{-\frac{1}{2}} c_3 ^{-1}(c_0 ^{-1} \beta _{j_0+1})^{\frac{\delta }{2\theta }} \leq c_2 ^{-\frac{1}{2}} c_3 ^{-1}\beta _{j_0} ^{\frac{\delta }{2}}.
\end{eqnarray*}
Hence, it follows from Proposition \ref{Liouville}, \ref{Liouvilleinequ3}, and Lemma \ref{Galois conj} that
\begin{eqnarray*}
|\alpha -\alpha _{j_0}| & \geq & \max (1,|\alpha _{j_0}-\alpha _{j_0} '|^{-1})H(\alpha _{j_0})^{-2}H(\alpha )^{-2} \\
& > &  c_2 \max (1,|\alpha _{j_0}-\alpha _{j_0} '|^{-1})\beta _{j_0} ^{-2-\delta }\geq |\xi -\alpha _{j_0}|.
\end{eqnarray*}
Therefore, we obtain
\begin{eqnarray*}
|\xi -\alpha | & = & |\alpha -\alpha _{j_0}| \geq \max (1,|\alpha _{j_0}-\alpha _{j_0} '|^{-1})H(\alpha _{j_0})^{-2}H(\alpha )^{-2}\\
& \geq & c_3 ^{-2}\beta _{j_0} ^{-2} H(\alpha )^{-2} \geq c_0 ^{-2}c_2 ^{-\frac{2\theta }{\delta }}c_3 ^{-2-\frac{4\theta }{\delta }}H(\alpha )^{-2-\frac{4\theta }{\delta }}.
\end{eqnarray*}
By the assumption, we have
\begin{eqnarray*}
1+\delta \leq w_2 ^{*}(\xi )\leq \max \left( 1+\frac{4\theta }{\delta } , (2+\rho )\frac{2\theta }{\delta }-1\right) =(2+\rho )\frac{2\theta }{\delta }-1.
\end{eqnarray*}
\end{proof}

\section{Combinational lemma}\label{Combinational sec}

The lemma below is a slight improvement of \cite[Lemma 9.1]{Bugeaud2}.

\begin{lem}\label{combi}
Let ${\bf a}=(a_n)_{n \geq 1}$ be a sequence on a finite set $\mathcal{A}$.
Assume that there exist integers $\kappa \geq 2$ and $n_0\geq 1$ such that for all $n\geq n_0$,
\begin{eqnarray*}
p({\bf a},n)\leq \kappa n.
\end{eqnarray*}
Then, for each $n\geq n_0$, there exist finite words $U_n, V_n$ and a positive rational number $w_n$ such that the following hold:
\begin{enumerate}
\item[(i)] $U_n V_n ^{w_n}$ is a prefix of ${\bf a}$,
\item[(ii)] $|U_n|\leq 2\kappa |V_n|$,
\item[(iii)] $n/2 \leq |V_n|\leq \kappa n$,
\item[(iv)] if $U_n$ is not an empty word, then the last letters of $U_n$ and $V_n$ are different,
\item[(v)] $|U_n V_n ^{w_n}|/|U_n V_n|\geq 1+1/(4\kappa +2)$,
\item[(vi)] $|U_n V_n|\leq (\kappa +1)n-1$,
\item[(vii)] $|U_n ^2 V_n|\leq (2\kappa +1)n-2$.
\end{enumerate}
\end{lem}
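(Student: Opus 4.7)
The plan is to locate a length-$n$ factor that occurs twice early in ${\bf a}$ by the pigeonhole principle, shift the pair of occurrences leftward to enforce (iv), extend the period to the right, and, if the period is shorter than $n/2$, bundle it into a longer block to secure (iii).

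First, since $p({\bf a}, n) \leq \kappa n$, the $\kappa n + 1$ length-$n$ factors of ${\bf a}$ starting at positions $1, 2, \ldots, \kappa n + 1$ cannot all be distinct, so there exist $1 \leq i < j \leq \kappa n + 1$ with $a_i \cdots a_{i+n-1} = a_j \cdots a_{j+n-1}$. While $i > 1$ and $a_{i-1} = a_{j-1}$, I replace $(i, j)$ by $(i - 1, j - 1)$; this preserves $d := j - i$ together with the coincidence of the shifted length-$n$ factors. Denote the outcome by $(i_0, j_0)$: then $j_0 \leq \kappa n + 1$, and either $i_0 = 1$ or $a_{i_0 - 1} \neq a_{j_0 - 1}$.

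Second, let $L \geq n$ be the largest integer with $a_{i_0 + m} = a_{i_0 + m + d}$ for $0 \leq m \leq L - 1$. Writing $U := a_1 \cdots a_{i_0 - 1}$ and $V := a_{i_0} \cdots a_{j_0 - 1}$, the prefix of ${\bf a}$ of length $i_0 - 1 + L + d$ equals $U V^{(L+d)/d}$. I set $U_n := U$, and take $(V_n, w_n) := (V, (L+d)/d)$ when $d \geq n/2$, or $(V_n, w_n) := (V^k, (L+d)/(kd))$ with $k := \lceil n/(2d)\rceil$ when $d < n/2$; in the latter case $kd \in [n/2, n)$. Periodicity makes $U_n V_n^{w_n}$ a prefix of ${\bf a}$, giving (i), and forces the last letter of $V_n$ to coincide with $a_{j_0 - 1}$, giving (iv) via $a_{i_0 - 1} \neq a_{j_0 - 1}$ when $U_n$ is nonempty.

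Finally, the bounds $i_0 - 1 \leq j_0 - 2 \leq \kappa n - 1$, $n/2 \leq |V_n| \leq \max(d, n) \leq \kappa n$, and $L + d - |V_n| \geq n/2$ yield (ii), (iii), (vi), (vii) by direct substitution, while
\begin{equation*}
\frac{|U_n V_n^{w_n}|}{|U_n V_n|} = 1 + \frac{L + d - |V_n|}{i_0 - 1 + |V_n|} \geq 1 + \frac{n/2}{(\kappa + 1) n} > 1 + \frac{1}{4\kappa + 2}
\end{equation*}
settles (v). The main obstacle is the short-period regime $d < n/2$: directly taking $V_n := V$ would violate (iii), so the period must be bundled into $V^k$; the accompanying checks $(k-1)d \leq L$ (so $V_n^{w_n}$ stays inside the periodic block) and $L + d - kd \geq n/2$ (so the ratio in (v) survives the rescaling) both follow from $(k-1)d \leq n/2$ and $L \geq n$.
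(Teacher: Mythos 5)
Your proof is correct and follows essentially the same strategy as the paper's: pigeonhole on the $\kappa n+1$ length-$n$ factors of the prefix of length $(\kappa+1)n$, a left-shift (equivalently, minimizing the prefix before the repetition) to secure (iv), and a case split on the offset $d$ with short periods bundled into $V^{\lceil n/(2d)\rceil}$ — the paper's $C_n^{\lceil d_n/2\rceil}$ with $d_n = n/d$ is exactly this same bundling, so the two constructions coincide. The only cosmetic difference is that you phrase the periodicity directly via $L$ and the positions $(i_0,j_0)$ rather than through the fractional power $W_n = C_n^{d_n}$, and you split at $d \geq n/2$ rather than at $d \geq n$; in the overlap range $n/2 \leq d < n$ both versions set $V_n = V$, so this is a reorganization rather than a different argument.

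One small wording caveat: "let $L \geq n$ be the largest integer with $a_{i_0+m}=a_{i_0+m+d}$" is ill-posed when ${\bf a}$ is eventually periodic with period dividing $d$ from $i_0$ on (no largest such $L$); since only $L \geq n$ is used, you should simply fix $L := n$, which is what the paper implicitly does.
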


\begin{proof}
For $n\geq 1$, we denote by $A(n)$ the prefix of ${\bf a}$ of length $n$.
By Pigeonhole principle, for each $n\geq n_0$, there exists a finite word $W_n$ of length $n$ such that the word appears to $A((\kappa +1)n)$ at least twice.
Therefore, for each $n\geq n_0$, there exist finite words $B_n, D_n, E_n \in \mathcal{A}^{*}$ and $C_n \in \mathcal{A}^{+}$ such that
\begin{eqnarray*}
A((\kappa +1)n)=B_n W_n D_n E_n =B_n C_n W_n E_n.
\end{eqnarray*}
We can take these words in such way that if $B_n$ is not empty, then the last letter of $B_n$ is different from that of $C_n$.
Firstly, we consider the case of $|C_n|\geq |W_n|$.
Then, there exists $F_n \in \mathcal{A}^{*}$ such that
\begin{eqnarray*}
A((\kappa +1)n)=B_n W_n F_n W_n E_n.
\end{eqnarray*}
Put $U_n :=B_n, V_n:=W_n F_n$, and $w_n:=|W_n F_n W_n|/|W_n F_n|$.
Since $U_n V_n ^{w_n}=B_n W_n F_n W_n$, the word $U_n V_n ^{w_n}$ is a prefix of ${\bf a}$.
It is obvious that $|U_n|\leq (\kappa -1)|V_n|$ and $n\leq |V_n|\leq \kappa n$.
By the definition, we have (iv) and (vi).
Furthermore, we see that
\begin{gather*}
\frac{|U_n V_n ^{w_n}|}{|U_n V_n|}=1+\frac{n}{|U_n V_n|}\geq 1+\frac{1}{\kappa },\\
|U_n ^2 V_n|\leq |U_n V_n|+|U_n|\leq \kappa n+(\kappa -1)n=(2\kappa -1)n.
\end{gather*}
We next consider the case of $|C_n|< |W_n|$.
Since the two occurrences of $W_n$ do overlap, there exists a rational number $d_n>1$ such that $W_n=C_n ^{d_n}$.
Put $U_n:=B_n, V_n:=C_n ^{\lceil d_n /2 \rceil}$, and $w_n:=(d_n +1)/\lceil d_n/2 \rceil$.
Obviously, we have (i) and (iv).
Since $ \lceil d_n/2 \rceil \leq d_n$ and $d_n |C_n|\leq 2 \lceil d_n/2 \rceil |C_n|$, we get $n/2\leq |V_n|\leq n$.
Using (iii) and $|U_n|\leq \kappa n-1$, we can see (ii), (vi), and (vii).
It is immediate that $w_n \geq 3/2$.
Hence, we obtain
\begin{eqnarray*}
\frac{|U_n V_n ^{w_n}|}{|U_n V_n|} & = & 1+\frac{ \lceil (w_n-1)|V_n| \rceil}{|U_n V_n|} \geq 1+ \frac{w_n-1}{|U_n|/|V_n| +1} \\
& \geq & 1+\frac{1/2}{2\kappa +1}= 1+\frac{1}{4\kappa +2}.
\end{eqnarray*}
\end{proof}

\section{Proof of the main results}\label{Proof sec}

\begin{proof}[{\rm Proof of Theorem \ref{main4}}]
Put
\begin{eqnarray*}
\xi _{w,j}:=[0,a_{1,w},\ldots ,a_{\lfloor w^j \rfloor ,w},\overline{b}] \quad \mbox{for } j\geq 1.
\end{eqnarray*}
Since $\xi _w$ and $\xi _{w,j}$ have the same first $(\lfloor w^{j+1} \rfloor -1)$-th partial quotients, while $\lfloor w^{j+1} \rfloor $-th partial quotients are different, we have
\begin{eqnarray*}
|\xi _w -\xi _{w,j}|\asymp |q_{\lfloor w^{j+1} \rfloor}|^{-2}
\end{eqnarray*}
by Lemma \ref{fund} (v) and \ref{lower bound}.
Let $0<\iota <w$ be a real number such that $(w-\iota -2)(w-\iota -1)\geq 2(w+\iota )$ when $p\neq 2$, and $(w-\iota -4)(w-\iota -1)\geq 4(w+\iota )$ when $p=2$.
It is obvious that
\begin{eqnarray*}
|q_{\lfloor w^j \rfloor }|^{w-\iota }\ll |q_{\lfloor w^{j+1} \rfloor }|\ll  |q_{\lfloor w^j \rfloor }|^{w+\iota }
\end{eqnarray*}
for sufficiently large $j$ by Lemma \ref{fund} (iii).
Thus, we have
\begin{eqnarray*}
H(\xi _{w,j})^{-w-\iota }\ll |\xi _w -\xi _{w,j}|\ll  H(\xi _{w,j})^{-w+\iota }
\end{eqnarray*}
for sufficiently large $j$ by Lemma \ref{h and l}.
It follows from Lemma \ref{conj2} and \ref{h and l} that
\begin{eqnarray*}
|\xi _{w,j} -\xi ' _{w,j}|\asymp H(\xi _{w,j})^{-1}.
\end{eqnarray*}
For sufficiently large $j$, we see that
\begin{eqnarray*}
H(\xi _{w,j})\leq H(\xi _{w,j+1})\ll H(\xi _{w,j})^{w+\iota }.
\end{eqnarray*}
It follows from Proposition \ref{Mahler and Koksma} that $w_2 ^{*}(\xi _w)\in [w-\iota -1, w+\iota -1]$ and $w_2 (\xi _w)-w_2 ^{*}(\xi )=1$.
Since $\iota $ is arbitrary, we have $w_2 ^{*}(\xi )=w-1$ and $w_2(\xi )=w$.
\end{proof}

\begin{proof}[{\rm Proof of Theorem \ref{main5}}]
Put
\begin{eqnarray*}
\xi _{w,\eta ,j}:=[0,a_{1,w,\eta },\ldots ,a_{\lfloor w^j\rfloor ,w,\eta },\overline{b,\ldots ,b,d}] \quad \mbox{for } j\geq 1,
\end{eqnarray*}
where the length of period part is $\lfloor \eta w^j\rfloor$.
Since $\lfloor w^j\rfloor +(m_j +1)\lfloor \eta w^j\rfloor >\lfloor w^{j+1}\rfloor$, it follows that $\xi _{w,\eta }$ and $\xi _{w,\eta ,j}$ have the same first $(\lfloor w^{j+1} \rfloor -1)$-th partial quotients, while $\lfloor w^{j+1} \rfloor $-th partial quotients are different.
Thus, we have
\begin{eqnarray*}
|\xi _{w,\eta }-\xi _{w,\eta ,j}|\asymp |q_{\lfloor w^{j+1}\rfloor }|^{-2} \quad \mbox{for } j\geq 1
\end{eqnarray*}
by Lemma \ref{fund} (v) and \ref{lower bound}.
We see that for $j\geq 1$
\begin{eqnarray*}
|\xi _{w,\eta ,j}-\xi _{w,\eta ,j} '|\asymp |q_{\lfloor w^j\rfloor }|^{-2},\quad H(\xi _{w,\eta ,j})\asymp |q_{\lfloor w^j\rfloor }q_{\lfloor w^j\rfloor +\lfloor \eta w^j\rfloor }|
\end{eqnarray*}
by Lemma \ref{conj2} and \ref{h and l}.
Let $0<\iota <\min \{w, 2+\eta \}$ be a real number such that
\begin{eqnarray*}
\left( \frac{2 (w-\iota )}{2+\eta +\iota }-3+\frac{2}{2+\eta }\right) \left( \frac{2(w-\iota )}{2+\eta +\iota }-2+\frac{2}{2+\eta +\iota }\right) \geq 2(w+\iota )\frac{2+\eta +\iota }{2+\eta -\iota } \left( 2-\frac{2}{2+\eta +\iota }\right)
\end{eqnarray*}
when $p\neq 2$, and
\begin{eqnarray*}
\left( \frac{2 (w-\iota )}{2+\eta +\iota }-5+\frac{2}{2+\eta }\right) \left( \frac{2(w-\iota )}{2+\eta +\iota }-2+\frac{2}{2+\eta +\iota }\right) \geq 4(w+\iota )\frac{2+\eta +\iota }{2+\eta -\iota } \left( 2-\frac{2}{2+\eta +\iota }\right)
\end{eqnarray*}
when $p=2$.
It is obvious that
\begin{eqnarray*}
|q_{\lfloor w^j\rfloor }|^{w-\iota }\ll |q_{\lfloor w^{j+1}\rfloor }|\ll |q_{\lfloor w^j\rfloor }|^{w+\iota },\\
|q_{\lfloor w^j\rfloor }|^{1+\eta -\iota }\ll |q_{\lfloor w^j \rfloor +\lfloor \eta w^j \rfloor }|\ll |q_{\lfloor w^j\rfloor }|^{1+\eta +\iota }
\end{eqnarray*}
for sufficiently large $j$.
Hence, we obtain
\begin{eqnarray*}
H(\xi _{w,\eta ,j})^{-\frac{2(w+\iota )}{2+\eta -\iota }}\ll |\xi _{w,\eta }-\xi _{w,\eta ,j}|\ll H(\xi _{w,\eta ,j})^{-\frac{2(w-\iota )}{2+\eta +\iota }},\\
H(\xi _{w,\eta ,j})^{-\frac{2}{2+\eta -\iota }}\ll |\xi _{w,\eta ,j}-\xi _{w,\eta ,j} '|\ll H(\xi _{w,\eta ,j})^{-\frac{2}{2+\eta +\iota }},\\
H(\xi _{w,\eta ,j})\leq H(\xi _{w,\eta ,j+1})\ll H(\xi _{w,\eta ,j})^{(w+\iota )\frac{2+\eta +\iota }{2+\eta -\iota }}
\end{eqnarray*}
for sufficiently large $j$.
It follows from Proposition \ref{Mahler and Koksma} that
\begin{eqnarray*}
w_2 ^{*}(\xi _{w,\eta })\in \left[ \frac{2 w-2-\eta -3\iota }{2+\eta +\iota }, \frac{2 w-2-\eta +3\iota }{2+\eta -\iota } \right] ,\\
w_2(\xi _{w,\eta })-w_2 ^{*}(\xi _{w,\eta })\in \left[ \frac{2}{2+\eta +\iota }, \frac{2}{2+\eta } \right] .
\end{eqnarray*}
Since $\iota $ is arbitrary, we have
\begin{eqnarray*}
w_2 ^{*}(\xi _{w,\eta })=\frac{2 w-2-\eta }{2+\eta },\quad w_2(\xi _{w,\eta })=\frac{2 w-\eta }{2+\eta }.
\end{eqnarray*}
\end{proof}

\begin{proof}[{\rm Proof of Theorem \ref{main1}}]
Applying Lemma \ref{combi}, for $n \geq n_0$, we take finite words $U_n ,V_n$ and a rational number $w_n$ satisfying Lemma \ref{combi} (i)-(v) and (vii).
We define a positive integer sequence $(n_j)_{j\geq 0}$ by $n_{j+1}=2(2\kappa +1)\lceil \log A/\log q\rceil n_j$ for $j\geq 0$.
Put $r_j:=|U_{n_j}|, s_j:=|V_{n_j}|$, and $\tilde{w}_j:=w_{n_j}$ for $j\geq 0$.
By Lemma \ref{combi} (iv), we have $a_{r_j}\neq a_{r_j+s_j}$ for all $j\geq 0$. 
By the assumption and Lemma \ref{fund} (iii), we get $q^n \leq |q_n|\leq A^n$ for all $n\geq 1$.
Therefore, it follows from Lemma \ref{combi} (iii) and (vi) that for $j\geq 0$
\begin{eqnarray}\label{theta}
|q_{r_j} q_{r_j+s_j}|<|q_{r_{j+1}} q_{r_{j+1} s_{j+1}}|\leq |q_{r_j} q_{r_j+s_j}|^{4(2\kappa +1)^2 \left\lceil \frac{\log A}{\log q}\right\rceil \frac{\log A}{\log q}}.
\end{eqnarray}
Put $\alpha _j :=[0,a_1,\ldots ,a_{r_j},\overline{a_{r_j+1},\ldots ,a_{r_j+s_j}}]$ for $j\geq 1$.
By Lemma \ref{height upper}, we obtain
\begin{eqnarray}\label{beta}
H(\alpha _j)\leq |q_{r_j}q_{r_j+s_j}|
\end{eqnarray}
for $j\geq 0$.
Since $\xi $ and $\alpha _j$ have the same first $r_j +\lceil \tilde{w}_j s_j \rceil $-th partial quotients, we have
\begin{eqnarray*}
|\xi -\alpha _j| & \leq & \max \left( \left| \xi -\frac{p_{r_j +\lceil \tilde{w} _j s_j \rceil }}{q_{r_j +\lceil \tilde{w}_j s_j \rceil }}\right| , \left| \alpha _j -\frac{p_{r_j +\lceil \tilde{w}_j s_j \rceil }}{q_{r_j +\lceil \tilde{w}_j s_j \rceil }}\right| \right) \\
& \leq & |q_{r_j +\lceil \tilde{w}_j s_j \rceil }|^{-2} q^{-1} \leq |q_{r_j+s_j}|^{-2} q^{-2(\lceil \tilde{w}_j s_j \rceil -s_j)-1}
\end{eqnarray*}
for $j\geq 0$ by Lemma \ref{fund} (iii) and (v).
By Lemma \ref{combi} (v), we have
\begin{eqnarray*}
q^{2(\lceil \tilde{w}_j s_j \rceil -s_j)+1}\gg q^{\frac{r_j+s_j}{2\kappa +1}}|q_{r_j} q_{r_j +s_j}|^{\frac{\log q}{(4\kappa +2)\log A}}
\end{eqnarray*}
for $j\geq 0$.
From Lemma \ref{conj2}, we deduce that
\begin{eqnarray*}
|q_{r_j}|^2 \leq A^2 \max (|\alpha _j-\alpha ' _j|^{-1},1)
\end{eqnarray*}
for $j\geq 0$.
Hence, we obtain
\begin{eqnarray}\label{delta}
|\xi -\alpha _j|\ll A^2 \max (|\alpha _j -\alpha ' _j|^{-1},1) |q_{r_j}q_{r_j +s_j}|^{-2-\frac{\log q}{(4\kappa +2)\log A}}
\end{eqnarray}
for $j\geq 0$.
Take a real number $\delta $ which is greater than $\Dio ({\bf a})$.
Then $\xi $ and $\alpha _j$ have the same at most $\lceil \delta (r_j +s_j)\rceil $-th partial quotients  for sufficiently large $j$.
By Lemma \ref{lower bound}, we have
\begin{eqnarray}
|\xi -\alpha _j| & \geq & A^{-2} |q_{\lceil \delta (r_j +s_j)\rceil }|^{-2} \gg |q_{r_j} q_{r_j +s_j}|^{-4\delta \frac{(r_j +s_j)\log A}{(2 r_j +s_j)\log q}} \nonumber \\
& \gg & |q_{r_j} q_{r_j +s_j}|^{-4\delta \frac{\log A}{\log q}} \label{rho}
\end{eqnarray}
for sufficiently large $j$.
Applying Proposition \ref{Mahler and Koksma2} with (\ref{theta}), (\ref{beta}), (\ref{delta}), and (\ref{rho}), we obtain
\begin{eqnarray*}
w_2 ^{*}(\xi ) \leq 128(2\kappa +1)^3 \Dio ({\bf a}) \left( \frac{\log A}{\log q}\right) ^4 -1.
\end{eqnarray*}
Thus, we have (\ref{main1.1}) by (\ref{dif}).

Assume that the sequence $(|q_n|^{1/n})_{n\geq 1}$ converges.
Let $M$ be a limit of the sequence $(|q_n|^{1/n})_{n\geq 1}$.
For any $\varepsilon >0$, there exists an integer $n_1$ such that for all $n \geq n_1$,
\begin{eqnarray*}
(M-\varepsilon )^n<|q_n|<(M+\varepsilon )^n.
\end{eqnarray*}
In the same matter as above, we see that
\begin{gather*}
|q_{r_j} q_{r_j+s_j}|<|q_{r_{j+1}} q_{r_{j+1} s_{j+1}}|\leq |q_{r_j} q_{r_j+s_j}|^{4(2\kappa +1)^2 \left\lceil \frac{\log (M+\varepsilon )}{\log (M-\varepsilon )}\right\rceil \frac{\log (M+\varepsilon )}{\log (M-\varepsilon )}}, \\
|q_{r_j} q_{r_j +s_j}|^{-4\delta \frac{\log (M+\varepsilon )}{\log (M-\varepsilon )}}\ll |\xi -\alpha _j|\ll \max (|\alpha _j -\alpha ' _j|^{-1},1) |q_{r_j}q_{r_j +s_j}|^{-2-\frac{\log (M-\varepsilon )}{(4\kappa +2)\log (M+\varepsilon )}},
\end{gather*}
for sufficiently large $j$.
Applying Proposition \ref{Mahler and Koksma2}, we have
\begin{eqnarray*}
w_2 ^{*}(\xi ) \leq 64(2\kappa +1)^3 \Dio ({\bf a})-1.
\end{eqnarray*}
Thus, we have (\ref{main1.2}) by (\ref{dif}).
\end{proof}

\begin{proof}[{\rm Proof of Theorem \ref{main2}}]
From Theorem \ref{Lagrange}, \ref{Mahler lower}, and Proposition \ref{main7}, we have $w_2(\xi )\geq w_2 ^{*}(\xi )\geq 2$.
Without loss of generality, we may assume that $\Dio ({\bf a})>1$.
Take a real number $\delta $ such that $1<\delta <\Dio ({\bf a})$.
For $n\geq 1$, there exist finite words $U_n,V_n $ and a real number $w_n $ such that $U_n V_n ^{w_n}$ is the prefix of ${\bf a}$, the sequence $(|V_n ^{w_n}|)_{n\geq 1}$ is strictly increasing, and $|U_n V_n ^{w_n}|\geq \delta |U_n V_n|$.
Set $r_n :=|U_n|, s_n :=|V_n|$, and $\alpha _n :=[0,a_1,\ldots ,a_{r_n},\overline{a_{r_n+1},\ldots ,a_{r_n+s_n}}]$.
Let $\tilde{M}$ denote an upper bound of $(|q_n|^{1/n})_{n\geq 1}$.
For any $\varepsilon >0$, there exists an integer $n_0$ such that for all $n\geq n_0$,
\begin{eqnarray*}
(m-\varepsilon )^n<|q_n|<(M+\varepsilon )^n.
\end{eqnarray*}
Since $\xi $ and $\alpha _n$ have the same first ($r_n+\lceil w_n s_n\rceil $)-th partial quotients, we obtain
\begin{eqnarray*}
|\xi -\alpha _n|\leq |q_{r_n+\lceil w_n s_n\rceil}|^{-2}<(M+\varepsilon )^{-2(r_n+\lceil w_n s_n\rceil )\frac{\log (m-\varepsilon )}{\log (M+\varepsilon )}}.
\end{eqnarray*}

Assume that the sequences $(r_n)_{n\geq 1}$ and $(s_n)_{n\geq 1}$ are bounded.
Then, for all $n\geq 1$, we have
\begin{eqnarray*}
H(\alpha _n)\leq |q_{r_n}q_{r_n+s_n}|\leq \tilde{M}^{2 r_n+s_n}\leq C,
\end{eqnarray*}
where $C$ is some constant, by Lemma \ref{height upper}.
Therefore, the set $\{ \alpha _n \mid n\geq 1\} $ is finite.
Take a positive integer sequence $(n_i)_{i\geq 1}$ such that $n_i\rightarrow \infty $ as $i\rightarrow \infty $ and $\alpha _{n_1}=\alpha _{n_2}=\cdots $.
Since $(s_n)_{n\geq 1}$ is bounded, we have $w_n \rightarrow \infty $ as $n\rightarrow \infty $.
Hence, we obtain ${\bf a}=U_{n_i}\overline{V_{n_i}}$, which is a contradiction.

We next consider the case that $(r_n)_{n\geq 1}$ is unbounded.
Here, if necessary, taking a subsequence of $(r_n)_{n\geq 1}$, we assume that $(r_n)_{n\geq 1}$ is increasing and $r_1\geq n_0$.
Since $H(\alpha _n)\leq (M+\varepsilon )^{2 r_n +s_n}$ by Lemma \ref{height upper}, we have
\begin{eqnarray*}
|\xi -\alpha _n|\leq H(\alpha _n)^{-\frac{r_n+\lceil w_n s_n\rceil }{r_n+s_n}\frac{\log (m-\varepsilon )}{\log (M+\varepsilon )}}\leq H(\alpha _n)^{-\delta \frac{\log (m-\varepsilon )}{\log (M+\varepsilon )}}.
\end{eqnarray*}
Hence, we obtain (\ref{lDio}).

We consider the case that $(r_n)_{n\geq 1}$ is bounded, $(s_n)_{n\geq 1}$ is unbounded, and $\Dio ({\bf a})$ is finite.
Here, if necessary, taking a subsequence of $(s_n)_{n\geq 1}$, we assume that $(s_n)_{n\geq 1}$ is increasing and $s_1\geq n_0$.
Then, for all $n\geq 1$, we have
\begin{eqnarray*}
H(\alpha _n)\leq \tilde{M}^{r_n} (M+\varepsilon )^{r_n+s_n}<C_1 (M+\varepsilon )^{r_n+s_n},
\end{eqnarray*}
where $C_1$ is some constant.
Therefore, we obtain
\begin{eqnarray*}
|\xi -\alpha _n| & \leq & (C_1 H(\alpha _n)^{-1})^{2\frac{r_n+\lceil w_n s_n\rceil }{r_n+s_n}\frac{\log (m-\varepsilon )}{\log (M+\varepsilon )}}
\leq C_1 ^{2\Dio ({\bf a})} H(\alpha _n)^{-2\delta \frac{\log (m-\varepsilon )}{\log (M+\varepsilon )}}.
\end{eqnarray*}
Hence, we obtain (\ref{lDio}).

We consider the case that $(r_n)_{n\geq 1}$ is bounded, $(s_n)_{n\geq 1}$ is unbounded, and $\Dio ({\bf a})$ is infinite.
Then, for all $n\geq 1$, we have $q^n\leq |q_n|\leq \tilde{M}^n$, which implies $H(\alpha _n)\leq \tilde{M}^{2r_n+s_n}$.
Therefore, in the same matter, we obtain
\begin{eqnarray*}
|\xi -\alpha _n|\leq H(\alpha _n)^{-\delta \frac{\log q}{\log \tilde{M}}}.
\end{eqnarray*}
Hence, we have $w_2 ^{*}(\xi )=+\infty$.

Assume that the sequence $(|a_n|)_{n\geq 1}$ is bounded.
We denote by $A$ its upper bound.
We consider the case that $(r_n)_{n\geq 1}$ is unbounded.
Here, if necessary, taking a subsequence of $(r_n)_{n\geq 1}$, we assume that $(r_n)_{n\geq 1}$ is increasing and $r_1\geq n_0$.
Let $P_n(X)$ be the minimal polynomial of $\alpha _n$.
From Lemma \ref{conj2}, we obtain
\begin{eqnarray*}
|P_n(\xi )| & \leq & H(\alpha _n)|\xi -\alpha _n||\xi -\alpha _n '| \leq A^2 H(\alpha _n)q_{r_n+\lceil w_n s_n\rceil} ^{-2} q_{r_n} ^{-2} \\
& \leq & A^2 H(\alpha _n)^{-2\frac{2 r_n+\lceil w_n s_n\rceil }{2 r_n+s_n}\frac{\log (m-\varepsilon )}{\log (M+\varepsilon )}+1}.
\end{eqnarray*}
Since
\begin{eqnarray*}
\frac{2 r_n+\lceil w_n s_n\rceil }{2 r_n+s_n} & \geq & \frac{r_n+\delta (r_n +s_n)}{2 r_n +s_n}\geq \frac{r_n +s_n/2 +\delta (r_n +s_n /2)}{2 r_n +s_n}
\geq \frac{1+\delta }{2},
\end{eqnarray*}
we obtain (\ref{lDio2}).
For the remaining case, we have (\ref{lDio2}) in the same line of proof of  (\ref{lDio}).
\end{proof}

\appendix
\section{Rational approximation in $\lF _q((T^{-1}))$}\label{Rational sec}

\begin{lem}
Let ${\bf a}=(a_n)_{n\geq 0}$ be a non-ultimately periodic sequence over $\lF _q$.
Set $\xi :=\sum_{n=0}^{\infty}a_n T^{-n}$.
Then we have
\begin{eqnarray}\label{last2}
w_1(\xi )\geq \max (1, \Dio ({\bf a})-1).
\end{eqnarray}
\end{lem}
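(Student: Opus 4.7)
The plan is to translate Condition $(*)_{\rho }$ directly into a family of good rational approximations of $\xi $ and then read off the bound on $w_1(\xi )$. The estimate $w_1(\xi )\geq 1$ is immediate from Theorem \ref{Mahler lower} at $n=1$: since ${\bf a}$ is not ultimately periodic, $\xi \notin \lF _q(T)$ (the $T^{-1}$-expansion of a rational function is ultimately periodic), so $w_1(\xi )\geq 1$. The substantive task is therefore to prove $w_1(\xi )\geq \Dio ({\bf a})-1$, which we may assume to be nontrivial, i.e., $\Dio ({\bf a})>1$.

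Fix any $\rho $ with $1<\rho <\Dio ({\bf a})$ and choose witnesses $(U_n)_{n\geq 1}$, $(V_n)_{n\geq 1}$, $(w_n)_{n\geq 1}$ as in Condition $(*)_{\rho }$; set $r_n:=|U_n|$, $s_n:=|V_n|$, and $L_n:=|U_n V_n^{w_n}|$, so that $L_n\geq \rho (r_n+s_n)$ and $L_n\rightarrow \infty $. I would let $\eta _n\in \lF _q(T)$ be the rational function whose $T^{-1}$-expansion has coefficient sequence $U_n\overline{V_n}$. A short geometric-series computation writes $\eta _n=P_n/Q_n$ with $Q_n:=T^{r_n}(T^{s_n}-1)$ and $P_n\in \lF _q[T]$ of degree at most $r_n+s_n$, so the linear polynomial
\begin{eqnarray*}
R_n(X):=Q_n X-P_n \in (\lF _q[T])[X]
\end{eqnarray*}
has height $H(R_n)\leq q^{r_n+s_n}$.

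Because $\xi $ and $\eta _n$ share their first $L_n$ Laurent coefficients, the non-archimedean absolute value gives $|\xi -\eta _n|\leq q^{-L_n}$, whence
\begin{eqnarray*}
|R_n(\xi )|=|Q_n|\cdot |\xi -\eta _n|\leq q^{(r_n+s_n)-L_n}\leq q^{-(\rho -1)(r_n+s_n)}\leq H(R_n)^{-(\rho -1)}.
\end{eqnarray*}
Moreover $R_n(\xi )\neq 0$ since $\xi \notin \lF _q(T)$, and $|\xi -\eta _n|\rightarrow 0$ forces the $\eta _n$, hence the $R_n$, to take infinitely many distinct values. Therefore $w_1(\xi )\geq \rho -1$, and letting $\rho \nearrow \Dio ({\bf a})$ yields (\ref{last2}).

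The only mildly delicate point is the height bookkeeping: one must pin down $\eta _n$ in the explicit form $P_n/Q_n$ with a controllable denominator so that $H(R_n)\leq q^{r_n+s_n}$, and verify that the prefix condition really does translate into $|\xi -\eta _n|\leq q^{-L_n}$ in the non-archimedean metric. Everything else is formal.
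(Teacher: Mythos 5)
Your argument is correct and follows essentially the same route as the paper's own proof: choose Condition $(*)_{\rho}$ witnesses, build the rational approximation with denominator $T^{|U_n|}(T^{|V_n|}-1)$, bound $|\xi-p_n/q_n|$ by the length of the common prefix, and let $\rho\nearrow\Dio({\bf a})$. The paper simply cites Lemma~3.4 of \cite{Firicel} for the geometric-series identity that you work out by hand; otherwise the two arguments coincide.
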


\begin{proof}
From Theorem \ref{Mahler lower}, we have $w_1(\xi )\geq 1$.
Without loss of generality, we may assume that $\Dio ({\bf a})>1$.
Take a real number $\delta $ such that $1<\delta <\Dio ({\bf a})$.
For $n\geq 1$, there exist finite words $U_n, V_n$ and a real number $w_n$ such that $U_n V_n ^{w_n}$ is the prefix of ${\bf a}$, the sequence $(|V_n ^{w_n}|)_{n\geq 1}$ is strictly increasing, and $|U_n V_n ^{w_n}|\geq \delta |U_n V_n|$.
Put $q_n:=T^{|U_n|}(T^{|V_n|}-1)$.
Then there exists $p_n \in \lF _q[T]$ such that
\begin{eqnarray*}
\frac{p_n}{q_n}=\sum _{k=0}^{\infty }b_k ^{(n)} T^{-k},
\end{eqnarray*}
where $(b_k ^{(n)})_{k\geq 0}$ is the infinite word $U_n\overline{V_n}$ by Lemma 3.4 in \cite{Firicel}.
Since $\xi $ and $p_n/q_n$ have the same first $|U_nV_n ^{w_n}|$-th digits, we obtain
\begin{eqnarray*}
\left| \xi -\frac{p_n}{q_n} \right| \leq |q_n|^{-\delta }.
\end{eqnarray*}
Hence, we have (\ref{last2}).
\end{proof}

The following theorem is an analogue of Th\'eor\`eme 2.1 in \cite{Adamczewski4} and Theorem 1.3 in \cite{Ooto}, and is an extension of Theorem 1.2 in \cite{Firicel}.

\begin{thm}
Let ${\bf a}=(a_n)_{n\geq 0}$ be a non-ultimately periodic sequence over $\lF _q$.
Set $\xi :=\sum_{n=0}^{\infty}a_n T^{-n}$.
Assume that there exist integers $n_0\geq 1$ and $\kappa \geq 2$ such that for all $n\geq n_0$,
\begin{eqnarray*}
p({\bf a}, n) \leq \kappa n.
\end{eqnarray*}
If the Diophantine exponent of ${\bf a}$ is finite, then we have
\begin{eqnarray}\label{last}
w_1(\xi )\leq 8(\kappa +1)^2(2\kappa +1)\Dio ({\bf a})-1.
\end{eqnarray}
\end{thm}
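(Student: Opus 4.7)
The plan is to adapt the strategy of Theorem \ref{main1} to the rational setting, using rational approximations built as in the previous lemma in place of quadratic ones. First I would apply Lemma \ref{combi} to obtain, for each $n\geq n_0$, finite words $U_n$, $V_n$ and a positive rational $w_n$ satisfying (i)--(vii); set $r_n:=|U_n|$ and $s_n:=|V_n|$. As in the previous lemma, put $q_n:=T^{r_n}(T^{s_n}-1)$ and take $p_n\in \lF_q[T]$ whose Laurent expansion is $U_n\overline{V_n}$, so that the rational $\alpha_n:=p_n/q_n$ has height $H(\alpha_n)\asymp |q_n|=q^{r_n+s_n}$. I would then pass to the geometric subsequence $n_j:=2^j n_0$ and write $\alpha_j:=\alpha_{n_j}$, thinning further if necessary so that the $\alpha_j$ are distinct.

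Three quantitative ingredients then drive the proof. Since $\xi$ and $\alpha_j$ share the prefix $U_{n_j}V_{n_j}^{w_{n_j}}$ by (i), property (v) of Lemma \ref{combi} gives the upper bound $|\xi-\alpha_j|\leq H(\alpha_j)^{-1-\delta}$ with $\delta:=1/(4\kappa+2)$. For the lower bound, fix any $\delta_0>\Dio({\bf a})$: finiteness of the Diophantine exponent forces ${\bf a}$ and the periodic sequence $U_{n_j}\overline{V_{n_j}}$ to disagree before position $\lceil\delta_0(r_{n_j}+s_{n_j})\rceil$ for $j$ sufficiently large, yielding $|\xi-\alpha_j|\geq c_1 H(\alpha_j)^{-\delta_0}$. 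Finally, Lemma \ref{combi} (iii) and (vi) combined with $n_{j+1}=2 n_j$ yield the height growth $H(\alpha_{j+1})\leq H(\alpha_j)^{\theta}$ with $\theta:=4(\kappa+1)$, so that $\theta/\delta=8(\kappa+1)(2\kappa+1)$.

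The final step is a rational analogue of Proposition \ref{Mahler and Koksma2}, replacing the quadratic Liouville inequality by the elementary separation $|p/q-p'/q'|\geq 1/(|q||q'|)$ for distinct rationals. Given an arbitrary rational $\alpha$ of sufficiently large height, let $j_0$ be the smallest index with $H(\alpha_{j_0})^{\delta}>c_2 H(\alpha)$. When $\alpha\neq \alpha_{j_0}$, the separation bound gives $|\alpha-\alpha_{j_0}|>|\xi-\alpha_{j_0}|$, so by the ultrametric inequality $|\xi-\alpha|=|\alpha-\alpha_{j_0}|\gg H(\alpha)^{-1-\theta/\delta}$, using the minimality of $j_0$ and the height growth to bound $H(\alpha_{j_0})\ll H(\alpha)^{\theta/\delta}$; when $\alpha=\alpha_{j_0}$, the Diophantine lower bound gives $|\xi-\alpha|\gg H(\alpha)^{-\delta_0}$ directly. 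Combining, $w_1(\xi)\leq\max(\theta/\delta,\delta_0-1)\leq 8(\kappa+1)^2(2\kappa+1)\delta_0-1$ (a routine check using $\Dio({\bf a})\geq 1$ and $\kappa\geq 2$), and letting $\delta_0\to\Dio({\bf a})$ from above yields the claimed bound. The main obstacle is ensuring that $(\alpha_j)$ is an infinite sequence of distinct rationals: this should follow from the non-ultimate periodicity of ${\bf a}$, since coincidence $\alpha_{n_j}=\alpha_{n_{j'}}$ for infinitely many pairs would eventually force ${\bf a}$ to agree with a single ultimately periodic sequence on arbitrarily long prefixes, contradicting finiteness of $\Dio({\bf a})$ together with non-ultimate periodicity.
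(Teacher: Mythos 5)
Your construction of the approximating rationals (Firicel's Lemma~3.4 applied to $U_n\overline{V_n}$, plus Lemma~\ref{combi}) and the two approximation estimates $|q_n|^{-\delta_0}\leq|\xi-p_n/q_n|\leq|q_n|^{-1-1/(4\kappa+2)}$ coincide with the paper's. Where you differ is the final step: the paper invokes Lemma~3.2 of Firicel as a black box, whereas you unfold it into a direct rational Liouville argument modeled on Proposition~\ref{Mahler and Koksma2}. That is a legitimate and more self-contained route, and since the direct argument only needs the one-step growth bound $|q_{n_{j+1}}|\leq|q_{n_j}|^{\theta}$ rather than strict monotonicity of $(|q_{n_j}|)_j$, your cheaper thinning $n_{j+1}=2n_j$ (giving $\theta=4(\kappa+1)$) is admissible, whereas the paper's $n_{j+1}=2(\kappa+1)n_j$ is chosen precisely to force $|q_{n_j}|<|q_{n_{j+1}}|$ at the cost of $\theta=4(\kappa+1)^2$. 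Your version would in fact give a slightly sharper constant than the one stated.

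Two points need repair, though both are local. First, the claim $H(\alpha_n)\asymp|q_n|$ is unjustified: $p_n$ and $q_n$ need not be coprime, so only $H(\alpha_n)\leq|q_n|$ is available. Run the entire $j_0$-argument in terms of $|q_{n_j}|$ (defining $j_0$ by $|q_{n_{j_0}}|^{\delta}>cH(\alpha)$); the separation bound $|\alpha-\alpha_{j_0}|\geq H(\alpha)^{-1}|q_{n_{j_0}}|^{-1}$ and the minimality of $j_0$ then give $|\xi-\alpha|=|\alpha-\alpha_{j_0}|\gg H(\alpha)^{-1-\theta/\delta}$ with no coprimality needed. Second, in the case $\alpha=\alpha_{j_0}$ the Diophantine lower bound is $|\xi-\alpha_{j_0}|\geq|q_{n_{j_0}}|^{-\delta_0}$, and since $|q_{n_{j_0}}|\geq H(\alpha_{j_0})$ this does not directly give $|\xi-\alpha|\gg H(\alpha)^{-\delta_0}$; you must insert $|q_{n_{j_0}}|\leq|q_{n_{j_0-1}}|^{\theta}\ll H(\alpha)^{\theta/\delta}$, producing the exponent $\theta\delta_0/\delta-1$ rather than $\delta_0-1$. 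The corrected bound $w_1(\xi)\leq\max(\theta/\delta,\,\theta\delta_0/\delta-1)$ still lies below $8(\kappa+1)^2(2\kappa+1)\delta_0-1$ for $\kappa\geq2$, so the conclusion is unaffected. Finally, the distinctness of the $\alpha_j$ that you worry about at the end is not actually used anywhere in the argument and can be dropped.
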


\begin{proof}
For $n\geq n_0$, take finite words $U_n ,V_n$ and a rational number $w_n$ satisfying Lemma \ref{combi} (i)-(vi).
Put $q_n:=T^{|U_n|}(T^{|V_n|}-1)$.
Then there exists $p_n \in \lF _q[T]$ such that
\begin{eqnarray*}
\frac{p_n}{q_n}=\sum _{k=0}^{\infty }b_k ^{(n)} T^{-k},
\end{eqnarray*}
where $(b_k ^{(n)})_{k\geq 0}$ is the infinite word $U_n\overline{V_n}$ by Lemma 3.4 in \cite{Firicel}.
Since $\xi $ and $p_n/q_n$ have the same first $|U_nV_n ^{w_n}|$-th digits, we obtain
\begin{eqnarray*}
\left| \xi -\frac{p_n}{q_n} \right| \leq |q_n|^{-1-\frac{1}{4\kappa +2}}.
\end{eqnarray*}
Take a real number $\delta $ which is greater than $\Dio ({\bf a})$.
Note that $\delta >1.$
By the definition of Diophantine exponent, there exists an integer $n_1\geq n_0$ such that for all $n\geq n_1$
\begin{eqnarray*}
\left| \xi -\frac{p_n}{q_n} \right| \geq |q_n|^{-\delta }.
\end{eqnarray*}
We define a positive integer sequence $(n_j)_{j\geq 1}$ by $n_{j+1}=2(\kappa +1)n_j$ for $j\geq 1$.
It follows from Lemma \ref{combi} (iii) and (vi) that for $j\geq 1$
\begin{eqnarray*}
|q_{n_j}|<|q_{n_{j+1}}|\leq |q_{n_j}|^{4(\kappa +1)^2}.
\end{eqnarray*}
Thus, by Lemma 3.2 in \cite{Firicel}, we obtain (\ref{last}).
\end{proof}

Consequently, the following result holds.

\begin{cor}
Let ${\bf a}=(a_n)_{n\geq 0}$ be a non-ultimately periodic sequence over $\lF _q$.
Set $\xi :=\sum_{n=0}^{\infty}a_n T^{-n}$.
Then the Diophantine exponent of ${\bf a}$ is finite if and only if $\xi $ is not a $U_1$-number.
\end{cor}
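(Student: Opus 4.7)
The plan is to derive the equivalence by combining the two preceding results of this appendix. I first observe that the characterization of a $U_1$-number given in the paper reduces the corollary to the equivalence
\begin{eqnarray*}
\Dio({\bf a}) < +\infty \iff w_1(\xi) < +\infty.
\end{eqnarray*}
Indeed, if $w_1(\xi) = +\infty$ then, since every nonzero polynomial of degree one has degree at most $n$ for every $n \geq 1$, we have $w_n(\xi) \geq w_1(\xi) = +\infty$ for all $n$, so $w(\xi) = +\infty$ and $\xi$ is a $U$-number whose smallest index with infinite exponent is $1$, i.e.\ a $U_1$-number. Conversely, the definition of a $U_1$-number forces $w_1(\xi) = +\infty$.

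Given this reduction, the implication $\Dio({\bf a}) = +\infty \Rightarrow w_1(\xi) = +\infty$ is immediate from the first lemma of this appendix, which gives $w_1(\xi) \geq \max(1, \Dio({\bf a}) - 1)$. The reverse implication, $\Dio({\bf a}) < +\infty \Rightarrow w_1(\xi) < +\infty$, follows from the preceding theorem, which yields the explicit upper bound
\begin{eqnarray*}
w_1(\xi) \leq 8(\kappa+1)^2(2\kappa+1)\Dio({\bf a}) - 1 < +\infty.
\end{eqnarray*}

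The only delicate point I foresee lies in the second implication: the preceding theorem is stated under a linear complexity hypothesis $p({\bf a}, n) \leq \kappa n$ that does not appear in the corollary. I would resolve this either by reading the complexity hypothesis as implicitly assumed in the corollary (in parallel with Corollary~\ref{main3} of the main text), or by revisiting the proof of the theorem to verify that, for the purely qualitative conclusion $w_1(\xi) < +\infty$, the complexity input can be weakened. In the theorem's proof, the complexity assumption enters only through Lemma~\ref{combi}, which is used to produce an infinite family of repetition-rich prefixes of ${\bf a}$ from which the rational approximations $p_n/q_n$ are constructed via Firicel's Lemma 3.4; the challenge is to replace this extraction by a direct construction starting from any hypothetically very good rational approximation to $\xi$, and this is the main obstacle I expect in turning the sketch into a complete argument.
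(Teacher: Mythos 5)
Your reduction of the statement to the equivalence $\Dio(\mathbf{a}) < +\infty \iff w_1(\xi) < +\infty$ is correct: from the paper's definitions, $\xi$ is a $U_1$-number precisely when $w_1(\xi) = +\infty$ (the condition on $w_m$ for $1 \le m < 1$ is vacuous, and $w_1(\xi)=+\infty$ forces $w_n(\xi)=+\infty$ for all $n$, hence $w(\xi)=+\infty$). The direction $\Dio(\mathbf{a})=+\infty \Rightarrow w_1(\xi)=+\infty$ is immediate from the lemma $w_1(\xi)\ge \max(1,\Dio(\mathbf{a})-1)$. And the reverse direction is indeed meant to come from the appendix theorem. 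This is exactly the paper's intended derivation: the author writes only ``Consequently, the following result holds,'' so the corollary is advertised as a direct consequence of those two results. Your approach therefore matches the paper's.

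The concern you flag is real, and it is a gap in the paper's own statement rather than in your argument. The appendix theorem needs the linear complexity hypothesis $p(\mathbf{a},n)\le \kappa n$ because the whole construction runs through Lemma~\ref{combi}: the complexity bound is what lets the pigeonhole argument extract a repeated factor $W_n$ inside $A((\kappa+1)n)$ and hence produce the structured denominators $q_n=T^{|U_n|}(T^{|V_n|}-1)$ whose Laurent expansions are controlled. There is no obvious way to dispense with it, since over the one-letter-per-coefficient alphabet $\lF_q$ a good rational approximation $p/q$ with $\deg q = d$ corresponds to an ultimately periodic prefix whose period can be as long as $q^d-1$, far too long to contribute to $\Dio(\mathbf{a})$; without a complexity bound one cannot convert ``$\xi$ well approximable by rationals'' into ``$\mathbf{a}$ has a long repetitive prefix.'' So your option~(b) (weakening the complexity input for the qualitative statement) is unlikely to work as stated and would require a genuinely new idea, not merely a rereading of the existing proof. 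The correct reading is your option~(a): the complexity hypothesis should be understood as part of the corollary's hypotheses, exactly as it appears explicitly in the parallel Corollary~\ref{main3} of the main text. With that hypothesis added, your proof is complete and correct.
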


\subsection*{Acknowledgements}
I would like to express my gratitude to Prof.~Shigeki Akiyama for improving the language and structure of this paper.

\end{document}